\documentclass[11pt]{article}
\usepackage{graphics}%%% option packages
\usepackage[dvips]{graphicx}%%% option packages

\usepackage{amsmath,amssymb,amsthm}
\usepackage[utf8]{inputenc}
\usepackage{tikz}
\usepackage{subfigure}
\usepackage{color}
\usepackage{comment}
\usepackage{float}
\usepackage[a4paper,margin=2.5cm]{geometry}

\textwidth  = 15cm 
\textheight = 21cm 
\oddsidemargin = 0.5cm 
\evensidemargin = 1.0cm 
\parskip    = 0.4cm

\title{
Periodic points in complex continued fractions  by J. Hurwitz
}

\author{Shin-ichi Yasutomi}

\date{}

%       Theorem environments

%% \theoremstyle{plain} %% This is the default
\newtheorem{thm}{Theorem}[section]
\newtheorem{cor}[thm]{Corollary}
\newtheorem{lem}[thm]{Lemma}

\theoremstyle{definition}
\newtheorem{defn}{Definition}[section]
\newtheorem{example}{Example}[section]

\theoremstyle{remark}
\newtheorem{rem}{Remark}[section]

\numberwithin{equation}{section}

\usepackage{color}

\begin{document}

\maketitle

\footnote[0]{2020 {\it Mathematics Subject Classification}. 11J70, 11Y65, 11A55.}
\footnote[0]{{\it Key words and phrases.}  continued fraction,  J. Hurwitz's algorithm, periodic points, natural extension}

\begin{abstract}
J. Hurwitz \cite{JH} introduced an algorithm that generates a continued fraction expansion 
for complex numbers $\alpha \in \mathbb{C}$, 
where the partial quotients  belong to $(1+i)\mathbb{Z}[i]$.
 J. Hurwitz's work also provides a result analogous to Lagrange's theorem on periodic continued fractions, describing purely periodic points using the dual continued fraction expansion.
 S. Tanaka \cite{ST} examined the identical algorithm and
constructed the natural extension of the transformation generating the continued fraction expansion and established its ergodic properties.
In this paper, we aim to describe J. Hurwitz's insights into purely periodic points explicitly through the natural extension.
\end{abstract}

\section{Introduction}
J. Hurwitz \cite{JH} considered an algorithm that provides the following continued fraction expansion for $\alpha \in \mathbb{C}$:
where $a_j \in (1+i)\mathbb{Z}[i]$ for $j=0,1,\ldots$,
\begin{align}
\label{alpha=a_0}
\alpha = a_0 - \cfrac{1}{a_1 - \cfrac{1}{a_2 - \cfrac{1}{a_3 - \cfrac{1}{\ldots}}}}.
\end{align}
S. Tanaka \cite{ST} also examined an algorithm that yields the following continued fraction expansion for $\alpha \in \mathbb{C}$:
where $b_j \in (1+i)\mathbb{Z}[i]$ for $j=0,1,\ldots$,
\begin{align}
\label{alpha=b_0}
\alpha = b_0 + \cfrac{1}{b_1 + \cfrac{1}{b_2 + \cfrac{1}{b_3 + \cfrac{1}{\ldots}}}}.
\end{align}
Since for $\alpha$ in (\ref{alpha=a_0}), we have
\[
i\alpha = ia_0 + \cfrac{1}{ia_1 + \cfrac{1}{ia_2 + \cfrac{1}{ia_3 + \cfrac{1}{\ldots}}}},
\]
holds, the two algorithms are essentially the same, aside from minor differences.
Regarding J. Hurwitz's continued fraction algorithm, its historical connections to A. Hurwitz's continued fraction algorithm and S. Tanaka's continued fraction algorithm are traced in the literature \cite{O}, \cite{OJ}.
J. Hurwitz \cite{JH} provided a result analogous to Lagrange's theorem on periodic continued fractions in the case of regular continued fractions and described purely periodic points using the dual continued fraction expansion algorithm.
S. Tanaka \cite{ST} constructed the natural extension of the transformation that generates the continued fraction expansion and established its ergodic properties.
H. Nakada \cite{NA} also provided a different form of natural extension.
The description of purely periodic points given by J. Hurwitz is qualitative, and it can be explicitly expressed using this natural extension.
 This is the objective of this paper.
To facilitate a comparison between S. Tanaka's \cite{ST} and J. Hurwitz's \cite{JH} algorithms, we will reconstruct J. Hurwitz's algorithm in the form of the continued fraction expansion given by (\ref{alpha=b_0}).
The algorithms of J. Hurwitz and S. Tanaka are largely equivalent, but there are slight differences in the domains of their respective associated transformations.
We also provide the purely periodic points for Tanaka's algorithm.
Continued fraction expansions using imaginary quadratic fields as partial quotients
 have been considered since A. Hurwitz's continued fraction expansion \cite{AH}.
We refer the reader to \cite{G} and \cite{NNE}.
 Regarding the purely periodic points in A. Hurwitz's continued fraction expansion \cite{AH},
 G. González Robert \cite{G} has obtained a necessary condition and a sufficient condition, respectively.
The paper is organized as follows.
In Section 2, we define the continued fraction algorithms given by J. Hurwitz and  S. Tanaka.
We also describe the dual algorithm given by J. Hurwitz.
In Section 3, we present J. Hurwitz's results concerning the periodic points of the algorithm. 
In Section 4, we describe the natural extension given by H. Nakada. 
In Section 5, we present the necessary lemmas and offer some considerations regarding quadratic extensions over $\mathbb{Q}(i)$.
In Section 6, we state main theorems.
For a natural number $n$, the regular continued fraction of $\sqrt{n}$ becomes purely periodic from the second term onwards, and we observe that the same holds for the continued fractions of J. Hurwitz.

\section{Algorithms}
\begin{figure}
    \centering
\begin{tikzpicture}
\draw (1,0)--(0,1)--(-1,0)--(0,-1)--(1,0)--cycle;
\draw (-2,0)--(2,0);
\draw (0,-2)--(0,2);

\draw (-1,0.2)node[left]{$-1$};
\draw (1,0.2)node[right]{$1$};
\draw (-0.05,0.17)node[right]{$0$};
\draw (0,1.2)node[right]{$i$};
\draw (0,-1.2)node[right]{$-i$};

\coordinate (P) at (0,1);
\fill (P) circle [radius=1.5pt];
\coordinate (Q) at (0,-1);
\fill (Q) circle [radius=1.5pt];
\coordinate (R) at (-1,0);
\fill (R) circle [radius=1.5pt];
\coordinate (S) at (1,0);
\fill (R) circle [radius=1.5pt];
\coordinate (O) at (0,0);
\fill (O) circle [radius=1.5pt];
\coordinate (T) at (1,0);
\fill (T) circle [radius=1.5pt];

 \end{tikzpicture}
    \caption{$\overline{X}$. }
    \label{fig:jh-1}
\end{figure}
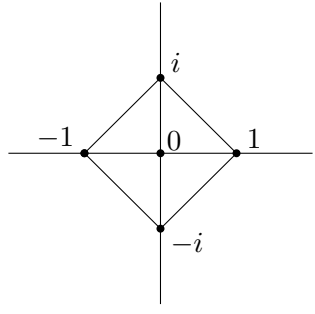

First, we outline J. Hurwitz's's algorithm.

Let $X=\{u(1+i)+v(1-i)|-1/2\leq u,v < 1/2\}$.

For $0 \leq k < 4 $, we define:

\begin{align*}
&L(i^k(1+i)) := i^k(1+i)\mathbb{Z}_{>0},\\
&L(i^k2) := \{i^k(m(1+i) + l(1-i)) \mid m, l\in \mathbb{Z}_{>0}\}.
\end{align*}

Then, the set $(1+i)Z[i] \setminus \{0\}$ is the disjoint union of the following:

\[
(1+i)Z[i] \setminus \{0\} = \bigcup_{0 \leq k < 4} L(i^k(1+i)) \cup \bigcup_{0 \leq k < 4} L(i^k2).
\]

For $w \in (1+i)\mathbb{Z}[i]$, the quadrilateral $Q_w$ is defined as follows (see Figure \ref{fig:jh-2}):
\begin{itemize}
    \item $Q_0 = \overline{X}$, where $\overline{X}$ denotes the closure of $X$.
    \item When $w \in L(i^k(1+i))$ for $0 \leq k < 4$,
    \begin{align*}
    &Q_w := (X^o + w) \cup \{ ui^k(1+i) - \frac{1}{2}i^k(1-i) + w \mid -\frac{1}{2} < u \leq \frac{1}{2} \} 
    \cup \\
    &\cup\left\{ \frac{1}{2}i^k(1+i) + ui^k(1-i) + w \mid -\frac{1}{2} \leq u \leq \frac{1}{2} \right\} \cup\\
    &\cup \{ ui^k(1+i) + \frac{1}{2}i^k(1-i) + w \mid -\frac{1}{2} < u \leq \frac{1}{2} \},
    \end{align*}
    where $X^o$ is the interior of $X$.
    \item When $w \in L(i^k2)$ for $0 \leq k < 4$,
    \begin{align*}
    &Q_w := (X^o + w) \cup \{ ui^k(1+i) + \frac{1}{2}i^k(1-i) + w \mid -\frac{1}{2} < u \leq \frac{1}{2} \}\cup\\ 
    &\cup \left\{ \frac{1}{2}i^k(1+i) + ui^k(1-i) + w \mid -\frac{1}{2} < u \leq \frac{1}{2} \right\}.
    \end{align*}
\end{itemize}
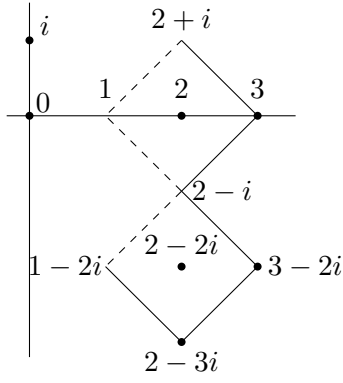
\begin{figure}
    \centering
\begin{tikzpicture}
\draw (2,-1)--(3,0)--(2,1);
\draw[dashed] (2,-1)--(1,0)--(2,1);
\draw (1,-2)--(2,-3)--(3,-2)--(2,-1);
\draw[dashed] (2,-1)--(1,-2);

\draw (-0.3,0)--(3.5,0);
\draw (0,-3.2)--(0,1.5);

\draw (3,0.1)node[above]{$3$};
\draw (1,0.1)node[above]{$1$};
\draw (-0.05,0.17)node[right]{$0$};
\draw (0,1.2)node[right]{$i$};
\draw (2,1)node[above]{$2+i$};
\draw (2,0.1)node[above]{$2$};
\draw (2,-2)node[above]{$2-2i$};
\draw (3,-2)node[right]{$3-2i$};
\draw (1.1,-2)node[left]{$1-2i$};
\draw (2,-3)node[below]{$2-3i$};
\draw (2,-1)node[right]{$2-i$};

\coordinate (P) at (3,0);
\fill (P) circle [radius=1.5pt];
\coordinate (Q) at (3,-2);
\fill (Q) circle [radius=1.5pt];
\coordinate (R) at (2,-3);
\fill (R) circle [radius=1.5pt];
\coordinate (O) at (0,0);
\fill (O) circle [radius=1.5pt];
\coordinate (O) at (2,0);
\fill (O) circle [radius=1.5pt];
\coordinate (O) at (2,-2);
\fill (O) circle [radius=1.5pt];
\coordinate (O) at (0,1);
\fill (O) circle [radius=1.5pt];

 \end{tikzpicture}
    \caption{$Q_2$ and $Q_{2-2i}$. }
    \label{fig:jh-2}
\end{figure}

Then, the set $\mathbb{C}$ is the disjoint union:
\begin{align*}
\mathbb{C}=\bigsqcup_{w\in (1+i)Z[i]} Q_w.
\end{align*}

For $z \in \mathbb{C}\backslash\mathbb{Z}[i]$, there exists a unique $w \in (1+i)\mathbb{Z}[i] $ such that $z \in Q_w $ and, we define $\lfloor z \rfloor_H = w$.
Additionally, when $z \in \mathbb{Z}[i] $, we define $\lfloor z \rfloor_H = z$.

For $z \in \mathbb{C} $, we define
$\lfloor z \rfloor_T=\lfloor u+1/2 \rfloor(1+i)+\lfloor v+1/2 \rfloor(1-i)$, where $z=u(1+i)+v(1-i) (u,v \in  \mathbb{R})$.

\begin{rem}
When $z \in 1+(1+i)\mathbb{Z}[i] $, we have $\left\lfloor z \right\rfloor_H \notin (1+i)\mathbb{Z}[i] $, which is necessary for the finiteness of the continued fraction expansion of elements in $\mathbb{Q}[i]$.
\end{rem}

For $z \in \overline{X} $, we define a transformation  $T_H$ on  $\overline{X}$ by
\begin{align*}
&T_H(z):=\begin{cases}\frac{1}{z}-\left\lfloor \frac{1}{z} \right\rfloor_H& \text{if $z\ne 0$},\\
0 & \text{if $z=0$},
\end{cases}\\
\text{and}\\
&a_H(z):=\begin{cases}\left\lfloor \frac{1}{z} \right\rfloor_H& \text{if $z\ne 0$},\\
0 & \text{if $z=0$}.
\end{cases}
\end{align*}

For $z \in X $, we define a transformation  $T_T$ on  $X$ by
\begin{align*}
&T_T(z):=\begin{cases}\frac{1}{z}-\left\lfloor \frac{1}{z} \right\rfloor_T& \text{if $z\ne 0$},\\
0 & \text{if $z=0$},
\end{cases}\\
\text{and}\\
&a_T(z):=\begin{cases}\left\lfloor \frac{1}{z} \right\rfloor_T& \text{if $z\ne 0$},\\
0 & \text{if $z=0$}.
\end{cases}
\end{align*}

We define J. Hurwitz's algorithm  as follows:
\begin{defn}\label{algolH}
Let $\alpha=\alpha_{(1)}^H \in \overline{X}$. 
We define $\{\alpha_{(n)}^H\}_{n\in \mathbb{Z}_{> 0}}$ and $\{a_n^H\}_{n\in \mathbb{Z}_{> 0}}$recursively as follows:\\
If $\alpha_{(n)}^H\ne 0$,
\begin{align*}
&\alpha_{(n+1)}^H:=T_H(\alpha_{(n)}^H),\\
&a_{n}^H:=a_H(\alpha_{(n)}^H).
\end{align*}
If $\alpha_{(n)}^H=0$, terminate.
\end{defn}

Namely, we consider the following continued fraction expansion:
\begin{align*}
\cfrac{1}{a_1^H + \cfrac{1}{a_2^H + \cfrac{1}{a_3^H+\cfrac{1}{\ldots}}}}.
\end{align*}

\begin{rem}
The continued fraction algorithm by J. Hurwitz\cite{JH} used the following transformation $T$ on $\mathbb{C}$.
For $z \in \mathbb{C}$, the transformation is given by
\begin{align*}
T(z) := -\dfrac{1}{z - \left\lfloor z \right\rfloor_H}.
\end{align*}
\end{rem}

We define Tanaka's algorithm  as follows:
\begin{defn}\label{algolT}
Let $\alpha=\alpha_{(1)}^T \in \overline{X}$. 
We define $\{\alpha_{(n)}^T\}_{n\in \mathbb{Z}_{> 0}}$ and $\{a_n^T\}_{n\in \mathbb{Z}_{> 0}}$recursively as follows:\\
If $\alpha_{(n)}^T\ne 0$,
\begin{align*}
&\alpha_{(n+1)}^T:=T_T(\alpha_{(n)}^T),\\
&a_{n}^T:=a_T(\alpha_{(n)}^T).
\end{align*}
If $\alpha_{(n)}^T=0$, terminate.
\end{defn}

J. Hurwitz \cite{JH} constructed what is now referred to as the dual algorithm for the algorithm in Definition \ref{algolH}.
We require some definitions.

We define $D:=\{z\in \mathbb{C} \mid |z|\geq 1\}$.

For $w \in (1+i)\mathbb{Z}[i]$, we define the region $S_w$ as follows:
When $w \in L(i^k(1+i))$ for $0\leq k <4$,
we define 
\begin{align*}
S_w := \{ z \in \mathbb{C} \mid |z - w| < 1, |z - (w - i^k(1 + i))| \geq 1 \}.
\end{align*}
When $w \in L(i^k2)$ for $0\leq k <4$,
we define 
\begin{align*}
S_w := \{ z \in \mathbb{C} \mid |z - w| < 1, |z - (w - i^k(1 + i))| \geq 1, |z - (w - i^k(1 - i))| \geq 1\}.
\end{align*}
We define 
$S_0:=\{z\in \mathbb{C} \mid |z|<1\}$.

Then, we have (see Figure\ref{fig:jh-3-2}) 
\begin{lem}\label{tiling}
\begin{align*}
\bigsqcup_{w\in (1+i)\mathbb{Z}[i]}S_w=\mathbb{C}\backslash (1+(1+i)\mathbb{Z}[i]).
\end{align*}
\end{lem}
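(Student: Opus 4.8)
The plan is to verify a disjointness statement and a covering statement separately, translating everything into the "disk inclusion/exclusion" language in which the sets $S_w$ are defined. First I would record the combinatorial skeleton: every $w \in (1+i)\mathbb{Z}[i] \setminus \{0\}$ lies in exactly one of the eight families $L(i^k(1+i))$, $L(i^k2)$ ($0 \le k < 4$), by the disjoint union stated earlier. For $w \in L(i^k(1+i))$ the relevant "previous" lattice point is $w - i^k(1+i)$, which is the point of $(1+i)\mathbb{Z}[i]$ lying strictly between $0$ and $w$ on the ray through $w$; for $w \in L(i^k2)$ there are two such neighbours, $w - i^k(1+i)$ and $w - i^k(1-i)$. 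So $S_w$ is exactly the open unit disk about $w$ with the closed unit disks about these one or two neighbours removed. The key elementary fact to isolate is: for $w, w'$ distinct nonzero points of $(1+i)\mathbb{Z}[i]$, the open disks $\{|z-w|<1\}$ and $\{|z-w'|<1\}$ overlap only when $|w-w'| = \sqrt{2}$, i.e. when $w' = w \pm i^k(1\pm i)$ is a nearest lattice neighbour, since the minimal nonzero distance in $(1+i)\mathbb{Z}[i]$ is $\sqrt 2$ and the next is $2$.

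For disjointness: suppose $z \in S_w \cap S_{w'}$ with $w \ne w'$. Both $|z-w|<1$ and $|z-w'|<1$ force (by the fact above) $|w - w'| = \sqrt 2$, and moreover each of $w,w'$ must be a nearest neighbour of the other along one of the coordinate directions $i^k(1+i)$ or $i^k(1-i)$. Now comes the asymmetry built into the definition: in each family the removed closed disks are precisely about the neighbours in the direction *towards $0$*. One checks that of the pair $\{w, w'\}$, exactly one — say $w'$ — satisfies $|w'| < |w|$ among the relevant configuration, and then $w'$ is exactly one of the centres whose closed disk is deleted from $S_w$; hence $|z - w'| \ge 1$, contradicting $z \in S_{w'}$. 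The case $w' = 0$ or $w = 0$ is handled the same way, using that $S_0$ is the full open unit disk and that for any nearest neighbour $w$ of $0$ the disk about $0$ is removed from $S_w$. This requires a short case analysis over the eight families and the (one or two) admissible directions, but each case is a one-line distance comparison.

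For the covering: fix $z \notin 1 + (1+i)\mathbb{Z}[i]$ and let $w$ be a closest point of $(1+i)\mathbb{Z}[i]$ to $z$; if there are ties pick $w$ of largest modulus (and, among those, break ties by a fixed rule on the argument, matching the half-open conventions in the definitions of $X$ and $Q_w$). I claim $z \in S_w$. Certainly $|z-w| \le \mathrm{dist}(z,(1+i)\mathbb{Z}[i])$, and this distance is $<1$ except when $z$ lies at a "deep hole" of the lattice $(1+i)\mathbb{Z}[i] = \{a+bi : a+b \text{ even}\}$; those deep holes are exactly the points of $1 + (1+i)\mathbb{Z}[i]$, which we have excluded, so $|z-w|<1$. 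It remains to check that $z$ avoids the closed disks about the one or two neighbours $w - i^k(1\pm i)$ that were removed when forming $S_w$: but those neighbours are at distance $\sqrt 2$ from $w$ and are themselves points of the lattice, so $|z - (w - i^k(1\pm i))| \ge \mathrm{dist}(z,(1+i)\mathbb{Z}[i])$, and if this were $<1$ then $w$ would not be the *maximal-modulus* closest point (the neighbour toward $0$ would be equally close and of smaller modulus — contradicting our tie-break) unless $z$ is strictly closer to $w$, in which case the inequality $|z-(w-i^k(1\pm i))|\ge 1 > |z-w|$ is automatic. Assembling the two halves gives the disjoint-union equality; finally one notes the excluded set $1+(1+i)\mathbb{Z}[i]$ is genuinely uncovered because each such point is at distance exactly $1$ from all four of its nearest lattice neighbours, hence lies in no *open* disk $S_w$.

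The main obstacle I anticipate is purely bookkeeping rather than conceptual: getting the half-open boundary conventions to match up exactly. The sets $S_w$ use a strict inequality $|z-w|<1$ on the "near" side and non-strict $|z-(w-i^k(1\pm i))| \ge 1$ on the "far" side, and this precise choice is what makes the union a partition of $\mathbb{C} \setminus (1+(1+i)\mathbb{Z}[i])$ rather than merely covering it or double-covering boundary circles; one must verify that every point on a shared circular arc $\{|z-w| = |z-w'| = 1\}$ (other than the forbidden deep holes) is assigned to exactly one of $S_w$, $S_{w'}$ by these conventions, and that this is consistent across all the two- and three-neighbour configurations in the $L(i^k2)$ families. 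I would organize this by listing, for each of the eight families, the explicit removed centres, and checking the arcs pairwise; Figure~\ref{fig:jh-3-2} is the guide.
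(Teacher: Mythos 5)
The paper states Lemma~\ref{tiling} without proof (it is meant to be read off from Figure~\ref{fig:jh-3-2}), so there is no official argument to compare with; your two-part plan (disjointness, then covering) is the natural one, and the disjointness half is essentially correct. The clean form of the observation you are using is: writing $w=u(1+i)+v(1-i)$ with $u,v\in\mathbb{Z}$, one has $|w|^2=2(u^2+v^2)$, and the one or two centres removed in forming $S_w$ are exactly the lattice neighbours of $w$ (at distance $\sqrt2$) of strictly smaller modulus. Since two distinct points of $(1+i)\mathbb{Z}[i]$ whose open unit disks meet must be at distance $<2$, hence exactly $\sqrt2$, and of two such neighbours exactly one has smaller modulus, disjointness follows as you say. (Minor slip: the condition $|z-(w-i^k(1\pm i))|\ge 1$ deletes the \emph{open} disk about the neighbour, not the closed one; this does not affect disjointness but matters for coverage of the shared circular arcs.)

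The covering half, however, contains a genuine error: the selection rule ``$w=$ closest lattice point to $z$, ties broken by largest modulus'' does not in general produce a $w$ with $z\in S_w$. Take $z=\tfrac12+\tfrac{51}{100}i$. Its unique closest point of $(1+i)\mathbb{Z}[i]$ is $1+i$, so your rule picks $w=1+i$; but $S_{1+i}=\{z\mid |z-(1+i)|<1,\ |z|\ge 1\}$ and here $|z|<1$, so $z\notin S_{1+i}$ (in fact $z\in S_0$). The step ``unless $z$ is strictly closer to $w$, in which case the inequality $|z-(w-i^k(1\pm i))|\ge 1>|z-w|$ is automatic'' is a non sequitur: a lattice point being farther from $z$ than $w$ is does not force it to be at distance $\ge 1$. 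The correct rule is dual to the one you chose: let $A=\{v\in(1+i)\mathbb{Z}[i] : |z-v|<1\}$. Since $z\notin 1+(1+i)\mathbb{Z}[i]$ (the deep holes of the lattice), $A\neq\emptyset$; since any two elements of $A$ are at distance $<2$, hence $\sqrt2$, and the lattice contains no three points pairwise at distance $\sqrt2$, the set $A$ has at most two elements and they are adjacent. Take $w\in A$ of \emph{minimal} modulus (well defined, as adjacent lattice points have distinct moduli). Then every removed neighbour of $w$ has modulus $<|w|$, hence lies outside $A$, hence is at distance $\ge1$ from $z$, so $z\in S_w$; and the other element of $A$, if any, has $w$ as a removed neighbour, so its $S$-set misses $z$. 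With this correction your argument closes.
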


For $z \in \mathbb{C} $, when $z \notin \mathbb{Z}[i] $,
 there exists a unique $w$ such that $z \in S_w $.
 In this case, we define $\lfloor z \rfloor_d := w $. Additionally, when $z \in \mathbb{Z}[i] $, we define $\lfloor z \rfloor_d = z $.

For $z\in D$, we define a map  $T_D:D\to D\cup \{\infty\}$ by
\begin{align*}
&T_D(z):=\begin{cases}\dfrac{1}{z-\lfloor z\rfloor_d}& \text{if $z\notin \mathbb{Z}[i]$},\\
\infty & \text{if $z\in \mathbb{Z}[i]$}.
\end{cases}\\
\text{and}\\
&a_D(z):=\lfloor z\rfloor_d.
\end{align*}

We define the dual algorithm of J. Hurwitz's algorithm as follows.
\begin{defn}\label{algolD}
Let $\alpha=\alpha_{(1)}^D \in D$. 
We define $\{\alpha_{(n)}^D\}_{n\in \mathbb{Z}_{> 0}}$ and $\{a_n^D\}_{n\in \mathbb{Z}_{> 0}}$recursively as follows:\\
If $\alpha_{(n)}^D\ne \infty$,
\begin{align*}
&\alpha_{(n+1)}^D:=T_D(\alpha_{(n)}^D),\\
&a_{n}^D:=a_D(\alpha_{(n)}^D).
\end{align*}
If $\alpha_{(n)}^D=\infty$, terminate.
\end{defn}

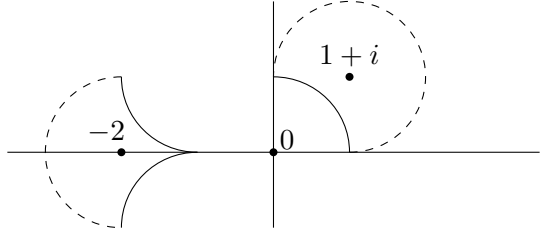
\begin{figure}
    \centering
\begin{tikzpicture}
\draw[dashed] (1,0) arc (-90:180:1);
\draw (-3.5,0)--(3.5,0);
\draw (0,-1)--(0,2);
\draw (1,0) arc (0:90:1);
\draw[dashed] (-2,1) arc (90:270:1);
\draw (-2,1) arc (180:270:1);
\draw (-1,0) arc (90:180:1);

\coordinate (P) at (1,1);
\fill (P) circle [radius=1.5pt];
\draw (1,1)node[above]{$1+i$};
\coordinate (P) at (-2,0);
\fill (P) circle [radius=1.5pt];
\draw (-2-0.2,0)node[above]{$-2$};
\coordinate (O) at (0,0);
\fill (O) circle [radius=1.5pt];
\draw (-0.05,0.17)node[right]{$0$};

 \end{tikzpicture}
    \caption{$S_{-2}$ and $S_{1+i}$. }
    \label{fig:jh-3}
\end{figure}

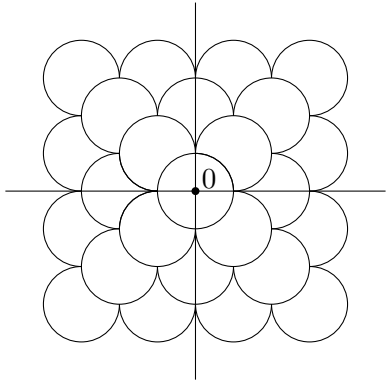
\begin{figure}
    \centering
\begin{tikzpicture}
\draw (0.5,0) arc (-90:180:0.5);
\draw (1,0.5) arc (-90:180:0.5);
\draw (1.5,1) arc (-90:180:0.5);

\draw (-0.5,0) arc (90:360:0.5);
\draw (-1,-0.5) arc (90:360:0.5);
\draw (-1.5,-1) arc (90:360:0.5);

\draw (0,-0.5) arc (180:450:0.5);
\draw (0.5,-1) arc (180:450:0.5);
\draw (1,-1.5) arc (180:450:0.5);

\draw (0,0.5) arc (0:270:0.5);
\draw (-0.5,1) arc (0:270:0.5);
\draw (-1,1.5) arc (0:270:0.5);

\draw (-1.5,1) arc (90:270:0.5);
\draw (-1.5,0) arc (90:270:0.5);

\draw (1,-0.5) arc (270:450:0.5);
\draw (1.5,0) arc (270:450:0.5);
\draw (1.5,-1) arc (270:450:0.5);

\draw (0.5,1) arc (0:180:0.5);
\draw (1,1.5) arc (0:180:0.5);
\draw (0,1.5) arc (0:180:0.5);

\draw (-0.5,-1) arc (180:360:0.5);
\draw (-1,-1.5) arc (180:360:0.5);
\draw (0,-1.5) arc (180:360:0.5);

\draw (-2.5,0)--(2.5,0);
\draw (0,-2.5)--(0,2.5);
\draw (0.5,0) arc (0:90:0.5);
\draw (-1,0.5) arc (90:270:0.5);
\draw (-1,0.5) arc (180:270:0.5);
\draw (-0.5,0) arc (90:180:0.5);

\coordinate (O) at (0,0);
\fill (O) circle [radius=1.5pt];
\draw (-0.05,0.17)node[right]{$0$};

\draw (0.5,0) arc (0:360:0.5);

 \end{tikzpicture}
    \caption{$S_w$ $(w\in (1+i)\mathbb{Z})$ }
    \label{fig:jh-3-2}
\end{figure}

\begin{rem}
J. Hurwitz refers to Definition \ref{algolH} as
 'the first type of continued fraction expansion' and 
 to Definition \ref{algolD} as 'the second type of continued fraction expansion'.
\end{rem}

\begin{example}\label{example1}
The following are examples of the continued fraction expansion based on Algorithm (Definition \ref{algolH}).

\begin{align*}
&\dfrac{2}{5}=[0;2,2],\\
&\dfrac{2+i}{9+8i}=[0;5+i,1-2i],\\
&\sqrt{2+i}-2=[0;\overline{-1-i,-3-i,1+i,3+i}],\\
&1-\sqrt{2}+(-2+\sqrt{2})i=[0;\overline{-1+i,3-3i,1-i,-3+3i}],
\end{align*}
where $\sqrt{2+i}-2=(-0.54465\ldots)+(0.34356\ldots)i$.

The following are examples of the continued fraction expansion based on Algorithm (Definition \ref{algolT}).

\begin{align*}
&\dfrac{2}{5}=[0;2,2],\\
&\dfrac{2+i}{9+8i}=[0;5+i,2-2i,\overline{0}],\\
&\sqrt{2+i}-2=[0;\overline{-1-i,-3-i,1+i,3+i}],\\
&1-\sqrt{2}+(-2+\sqrt{2})i=[0;\overline{-1+i,4-2i,-1+i,-2+4i}].
\end{align*}
\end{example}

\begin{rem}\label{remexample}
Here are some remarks related to Example \ref{example1}:

\begin{enumerate}
\item 
In J. Hurwitz's algorithm, any element of $\mathbb{Q}(i)$ has a finite continued fraction expansion; however, the last partial quotient may sometimes lie in $1+(1+i)\mathbb{Z}[i]$.  
\item 
In Tanaka's algorithm, there are cases where elements of $\mathbb{Q}(i)$ do not have a finite continued fraction expansion. In such cases, the partial quotients may continue with an infinite sequence of zeros, and the continued fraction does not converge.
\item 
Even for an irrational number over $\mathbb{Q}(i)$, there are cases where the continued fractions differ between the two algorithms, as seen in the fourth example.
 \end{enumerate}
\end{rem}

\section{The works of J. Hurwitz}
In this chapter, we present the necessary results from \cite{JH}.

\begin{thm}[J. Hurwitz\cite{JH}]\label{rationalH}
Let $\alpha\in \mathbb{Q}(i)\cap \overline{X}$.
Then, $\{\alpha_{(n)}^H\}$  is a finite sequence.
\end{thm}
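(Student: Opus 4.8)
The plan is to show that the Hurwitz algorithm, applied to $\alpha \in \mathbb{Q}(i)\cap\overline{X}$, must terminate in finitely many steps. I would argue by tracking a denominator-type quantity. Write each iterate as $\alpha_{(n)}^H = p_n/q_n$ with $p_n, q_n \in \mathbb{Z}[i]$, $\gcd(p_n,q_n)=1$; the map $z \mapsto 1/z - \lfloor 1/z\rfloor_H$ sends $p/q$ to $(q - a\,p)/p$ where $a = \lfloor q/p \rfloor_H \in (1+i)\mathbb{Z}[i]$ (or $a = \lfloor q/p\rfloor_H = q/p$ when $q/p \in \mathbb{Z}[i]$, which forces termination at the next step). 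So the new denominator is (up to units and common factors) the old numerator $p_n$, and I need a strictly decreasing integer quantity. The natural candidate is the norm $N(q_n) = |q_n|^2 \in \mathbb{Z}_{\ge 0}$: I want $N(p_n) < N(q_n)$ so that $N(q_{n+1}) < N(q_n)$, giving a strictly decreasing sequence of nonnegative integers, hence finiteness.

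The key step is therefore the inequality $|\alpha_{(n)}^H| < 1$ whenever $\alpha_{(n)}^H \ne 0$, i.e. every nonzero iterate lies strictly inside the unit disk; equivalently $|p_n| < |q_n|$. This is where the geometry of the region $\overline{X}$ and the partition $\mathbb{C} = \bigsqcup_{w} Q_w$ enters. For $z \in \overline{X}$, $z \ne 0$, one has $1/z \notin X^o$ in a suitable sense, and the choice $\lfloor 1/z\rfloor_H = w$ with $1/z \in Q_w$ means $1/z - w \in \overline{X}$ — but I actually need more, namely that $T_H(z)$ either is $0$ or lands in the open disk. Concretely: $\overline{X}$ is the closed square with vertices $\pm 1, \pm i$ (Figure~\ref{fig:jh-1}), whose inradius is $1/\sqrt 2 < 1$, so $\overline{X} \subset \{|z| \le 1/\sqrt2\} \cup \{\text{four vertices}\}$; away from the four vertices $\pm 1, \pm i$ every point of $\overline X$ has modulus $\le 1/\sqrt2 < 1$. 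The four vertices are exactly the points of $\mathbb{Z}[i] \cap \overline X$, i.e. points $z$ with $1/z \in \mathbb{Z}[i]$, and for those the algorithm sets $a_H$ and then reaches $0$ (or the remark about $1+(1+i)\mathbb{Z}[i]$ applies and one checks termination directly). For all other $z \in \overline X \setminus\{0\}$ we get $|z| \le 1/\sqrt2$, hence $|p_n|^2 \le \tfrac12 |q_n|^2$, so $N(q_{n+1}) \le N(p_n) \le \tfrac12 N(q_n) < N(q_n)$.

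Putting it together: assuming the sequence does not terminate, $\{N(q_n)\}_{n\ge 1}$ is a strictly decreasing sequence of positive integers once $n$ is large enough that $\alpha_{(n)}^H \notin \mathbb{Z}[i]$, which is impossible. The only thing to handle carefully is the arithmetic bookkeeping of common factors — since $\lfloor \cdot \rfloor_H$ can output an element of $1+(1+i)\mathbb{Z}[i]$ (the Remark after the definition of $\lfloor\cdot\rfloor_H$), one must verify that the reduced denominator of $\alpha_{(n+1)}^H$ still divides $p_n$ and that its norm does not exceed $N(p_n)$; this is immediate because passing to lowest terms only decreases the norm of the denominator. I expect the main obstacle to be purely the geometric verification that $\overline{X}\setminus\mathbb{Z}[i]$ lies in the open unit disk and the separate, finite-step disposal of the lattice-point vertices; once that is in hand the descent argument is routine.
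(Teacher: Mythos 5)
The paper states Theorem~\ref{rationalH} without proof (Section~3 merely quotes it from J.~Hurwitz's original memoir), so there is no in-text argument to compare yours against; the Euclidean-descent strategy you chose is the standard and correct one for such a statement. Writing $\alpha_{(n)}^H=p_n/q_n$ in lowest terms in $\mathbb{Z}[i]$, the recursion sends $p/q$ to $(q-ap)/p$, which is automatically again in lowest terms (since $\gcd(p,q-ap)=\gcd(p,q)=1$, so your worry about common factors is moot), and everything reduces to showing $|\alpha_{(n)}^H|<1$ for every nonterminal iterate. Your treatment of the case $1/\alpha_{(n)}^H\in\mathbb{Z}[i]$ (which covers the four vertices) is also fine: $T_H$ then returns $0$ and the expansion stops.

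However, the geometric fact you invoke is false as stated: $\overline{X}$ is \emph{not} contained in $\{|z|\le 1/\sqrt2\}$ together with the four vertices. The inradius of the square is $1/\sqrt2$, but that only measures the distance to the edges; a point such as $\tfrac{9}{10}+\tfrac{1}{10}i$ lies on the edge joining $1$ and $i$ and has modulus $\sqrt{0.82}>1/\sqrt2$. Consequently the inequality $N(p_n)\le\tfrac12 N(q_n)$ that you derive from it is also false in general. Fortunately you do not need it: writing $z=u(1+i)+v(1-i)$ with $|u|,|v|\le\tfrac12$ gives $|z|^2=2(u^2+v^2)\le 1$, with equality only at the four vertices $\pm1,\pm i$, so every nonzero non-vertex iterate satisfies $|z|<1$, hence $N(p_n)<N(q_n)$; since these norms are positive integers this already forces $N(q_{n+1})=N(p_n)\le N(q_n)-1$, and the descent terminates. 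With that substitution the proof goes through. (A smaller slip: $\mathbb{Z}[i]\cap\overline{X}=\{0,\pm1,\pm i\}$ is not the same set as $\{z\in\overline{X}\mid 1/z\in\mathbb{Z}[i]\}$, which also contains, e.g., $\tfrac12$; but both kinds of points lead to immediate termination, so this conflation is harmless.)
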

We easily obtain the following result. The proof is omitted.
\begin{cor}\label{rationalT}
Let $\alpha\in \mathbb{Q}(i)\cap X$.
Then, $\{\alpha_{(n)}^T\}$ is a finite sequence, or there exists some $m \in \mathbb{Z}_{>0}$ such that for all $n \geq m$, $\alpha_{(n)}^T = -1$.
\end{cor}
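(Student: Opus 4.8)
The plan is to reduce the statement to Theorem \ref{rationalH} by comparing the two floor functions $\lfloor\cdot\rfloor_H$ and $\lfloor\cdot\rfloor_T$, which differ only on the boundary edges of the fundamental domain $X$, i.e.\ on the translates $1+(1+i)\mathbb{Z}[i]$ of the half-open boundary. First I would observe that for $z\in X\setminus\mathbb{Z}[i]$ with $1/z\notin 1+(1+i)\mathbb{Z}[i]$, one has $\lfloor 1/z\rfloor_H=\lfloor 1/z\rfloor_T$, hence $T_H(z)=T_T(z)$ and $a_H(z)=a_T(z)$. Consequently, starting from $\alpha\in\mathbb{Q}(i)\cap X$, the Tanaka orbit $\{\alpha^T_{(n)}\}$ agrees with the Hurwitz orbit $\{\alpha^H_{(n)}\}$ step by step, up until (and including) the first index $m$ at which $1/\alpha^H_{(m)}\in 1+(1+i)\mathbb{Z}[i]$, should such an index exist. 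By Theorem \ref{rationalH} the Hurwitz orbit is finite; tracing how it terminates, and invoking Remark 2.1 and item (1) of Remark \ref{remexample}, the termination happens in exactly one of two ways: either $\alpha^H_{(N)}=0$ for some $N$ and no iterate $1/\alpha^H_{(n)}$ lies in $1+(1+i)\mathbb{Z}[i]$, or the last nonzero iterate $\alpha^H_{(N)}$ satisfies $1/\alpha^H_{(N)}\in 1+(1+i)\mathbb{Z}[i]$ (so that $\lfloor 1/\alpha^H_{(N)}\rfloor_H\notin(1+i)\mathbb{Z}[i]$ and the Hurwitz expansion stops with a forbidden partial quotient).

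In the first case the Tanaka orbit coincides with the Hurwitz orbit throughout and also reaches $0$, so $\{\alpha^T_{(n)}\}$ is finite and we are done. In the second case, the two orbits agree up through $\alpha^T_{(N)}=\alpha^H_{(N)}$, where $1/\alpha^H_{(N)}=:\beta\in 1+(1+i)\mathbb{Z}[i]$. Here the key computation is: $\lfloor\beta\rfloor_T=\beta-1$ (since $\beta-1\in(1+i)\mathbb{Z}[i]$ and $\beta$ sits on the appropriate boundary edge of $X+(\beta-1)$, picked up by the half-open rounding $\lfloor u+1/2\rfloor$), so $T_T(\alpha^T_{(N)})=\beta-(\beta-1)=1$. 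I would then need the separate, easy fact that $T_T(1)=-1$ and $T_T(-1)=-1$: indeed $1/1=1\in 1+(1+i)\mathbb{Z}[i]$ gives $\lfloor 1\rfloor_T = 0$ (taking $u=v=1/2$, $\lfloor 1\rfloor=1$, wait — recompute with $1=\tfrac12(1+i)+\tfrac12(1-i)$, so $u=v=\tfrac12$ and $\lfloor u+\tfrac12\rfloor=\lfloor1\rfloor=1$, giving $\lfloor1\rfloor_T=(1+i)\cdot 0 + \ldots$) — this boundary bookkeeping is exactly the point to get right — and then verify that $-1$ is a fixed point of $T_T$, so the orbit becomes constantly $-1$ from index $N+2$ onward. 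That yields the stated dichotomy with $m=N+2$.

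The main obstacle is the boundary bookkeeping in the second case: one must pin down precisely, using the half-open definition $\lfloor z\rfloor_T=\lfloor u+1/2\rfloor(1+i)+\lfloor v+1/2\rfloor(1-i)$ with $z=u(1+i)+v(1-i)$, the values of $T_T$ and $a_T$ at the points $1$ and $-1$ and confirm $T_T(-1)=-1$. Everything else is routine: the agreement of the two algorithms away from the exceptional edges is immediate from the definitions, and the finiteness of the Hurwitz orbit is Theorem \ref{rationalH}. I would also remark that the "or" in the statement is genuinely needed, pointing to the second and fourth lines of the Tanaka part of Example \ref{example1} ($\tfrac{2+i}{9+8i}=[0;5+i,2-2i,\overline{0}]$), which is why the proof is split according to whether the Hurwitz expansion terminates cleanly at $0$ or at a partial quotient outside $(1+i)\mathbb{Z}[i]$.
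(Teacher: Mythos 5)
Your reduction to Theorem~\ref{rationalH} rests on the claim that $\lfloor z\rfloor_H=\lfloor z\rfloor_T$ whenever $z\notin 1+(1+i)\mathbb{Z}[i]$, and this is false: the two floor functions disagree on large portions of the grid $\bigcup_{w}\partial(X+w)$, not just at the lattice points $1+(1+i)\mathbb{Z}[i]$. Concretely, take $\alpha=(3-i)/5\in X\cap\mathbb{Q}(i)$, so that $1/\alpha=(3+i)/2$. In the coordinates $z=u(1+i)+v(1-i)$ this point is $(u,v)=(1,\tfrac12)$; the Tanaka rounding gives $\lfloor(3+i)/2\rfloor_T=2$, while $(3+i)/2$ lies on the edge of $Q_{1+i}$ of the form $u(1+i)+\tfrac12(1-i)+(1+i)$, so $\lfloor(3+i)/2\rfloor_H=1+i$. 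Hence $T_H(\alpha)=(1-i)/2\neq(-1+i)/2=T_T(\alpha)$ already at the first step, even though no iterate is in $1+(1+i)\mathbb{Z}[i]$. (The same phenomenon for irrationals is exactly why Example~\ref{example1} exhibits a number with two different expansions and why the paper needs Lemmas~\ref{T_H(Y_1)}--\ref{backslash}.) Once the two orbits split at such a boundary point, Theorem~\ref{rationalH} tells you nothing about the subsequent Tanaka orbit, and your argument has no mechanism to rule out, say, infinitely many further boundary hits; this is a genuine gap, not a bookkeeping issue.

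There is also an error in the step you yourself flag as the crux: for $\beta\in 1+(1+i)\mathbb{Z}[i]$, writing $\beta=(\tfrac12+u_0)(1+i)+(\tfrac12+v_0)(1-i)$ with $u_0,v_0\in\mathbb{Z}$, one gets $\lfloor\beta\rfloor_T=(1+u_0)(1+i)+(1+v_0)(1-i)=\beta+1$, not $\beta-1$; so the next iterate is $-1$ directly (as it must be, since $T_T$ always lands in $X$ and $1\notin X$). The statement itself is easiest to prove without any comparison to $T_H$: write $\alpha_{(n)}^T=r_n/s_n$ in lowest terms in $\mathbb{Z}[i]$; then $s_{n+1}$ is an associate of $r_n$ and $|r_{n+1}|=|\alpha_{(n+1)}^T|\cdot|r_n|\le|r_n|$ because $T_T$ maps into $X\subset\{|z|\le 1\}$. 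The non-increasing sequence of Gaussian-integer norms $|r_n|^2$ stabilizes; either it reaches $0$ (the expansion terminates) or from some index on $|\alpha_{(n)}^T|=1$, and $-1$ is the unique point of $X$ on the unit circle and is a fixed point of $T_T$. This gives exactly the stated dichotomy.
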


The following corresponds to Lagrange's theorem in the case of regular continued fractions.

\begin{thm}[J. Hurwitz\cite{JH}]\label{eperiod}
Let $\alpha \in \overline{X} \backslash \mathbb{Q}(i)$, and 
assume that $\alpha$ is quadratic over $\mathbb{Q}(i)$. Then, $\{\alpha_{(n)}^H\}$ becomes eventually periodic.
\end{thm}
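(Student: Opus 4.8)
The plan is to follow the classical strategy for Lagrange-type theorems: show that the orbit $\{\alpha_{(n)}^H\}$ stays inside a finite set. Since $\alpha$ is quadratic over $\mathbb{Q}(i)$, let $K=\mathbb{Q}(i)(\alpha)$, a degree-two extension, and let $\sigma$ be the nontrivial element of $\mathrm{Gal}(K/\mathbb{Q}(i))$. Writing $\alpha_{(1)}^H=\alpha$ and tracking the linear-fractional relation between $\alpha$ and $\alpha_{(n)}^H$, we get
\begin{align*}
\alpha = \frac{p_{n-1}\alpha_{(n)}^H + p_{n-2}}{q_{n-1}\alpha_{(n)}^H + q_{n-2}},
\end{align*}
where $p_k/q_k$ are the convergents built from $a_1^H,\dots,a_k^H$, all lying in $\mathbb{Z}[i]$, and $p_{n-1}q_{n-2}-p_{n-2}q_{n-1}=\pm 1$. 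Solving for $\alpha_{(n)}^H$ shows it lies in $K$, and applying $\sigma$ gives the companion relation for the conjugate $\tilde\alpha_{(n)}^H := \sigma(\alpha_{(n)}^H)$. The key identity, obtained by subtracting the two fractional-linear expressions (or equivalently computing $\alpha - \tilde\alpha = \sigma$-difference), is
\begin{align*}
\tilde\alpha_{(n)}^H = -\,\frac{q_{n-2}}{q_{n-1}}\cdot\frac{\tilde\alpha - p_{n-2}/q_{n-2}}{\tilde\alpha - p_{n-1}/q_{n-1}},
\end{align*}
where $\tilde\alpha=\sigma(\alpha)$.

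Next I would establish the two boundedness inputs. For $\alpha_{(n)}^H$ itself: by construction $\alpha_{(n)}^H\in\overline{X}$ for all $n$, which is a bounded set, so $|\alpha_{(n)}^H|$ is bounded. For the conjugates $\tilde\alpha_{(n)}^H$: the convergents $p_{n-1}/q_{n-1}$ converge to $\alpha$ (this uses that $\alpha\notin\mathbb{Q}(i)$, so the algorithm does not terminate, together with the convergence of the J.~Hurwitz continued fraction — a standard estimate showing $|q_n|\to\infty$ and $|\alpha - p_{n-1}/q_{n-1}|\to 0$). Since $\tilde\alpha\ne\alpha$, for $n$ large the denominator $\tilde\alpha - p_{n-1}/q_{n-1}$ in the displayed formula is bounded away from $0$, the numerator is bounded, and the ratio $|q_{n-2}/q_{n-1}|$ is bounded (indeed $|q_{n-1}|\ge|q_{n-2}|$ for $n$ large, since partial quotients have $|a_n^H|\ge\sqrt 2$). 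Hence $|\tilde\alpha_{(n)}^H|$ is bounded, say for $n\ge N$.

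Now comes the finiteness argument. Fix $n\ge N$. From the relation $\alpha=\frac{p_{n-1}\alpha_{(n)}^H+p_{n-2}}{q_{n-1}\alpha_{(n)}^H+q_{n-2}}$ one reads off that $\alpha_{(n)}^H$ is a root of a quadratic polynomial $A_nx^2+B_nx+C_n$ with coefficients in $\mathbb{Z}[i]$ obtained by clearing denominators in the minimal polynomial of $\alpha$ and substituting; concretely, if $f(x)=ax^2+bx+c\in\mathbb{Z}[i][x]$ is (a $\mathbb{Z}[i]$-multiple of) the minimal polynomial of $\alpha$, then $A_n,B_n,C_n$ are fixed $\mathbb{Z}[i]$-bilinear expressions in $(p_{n-1},p_{n-2},q_{n-1},q_{n-2})$ and $(a,b,c)$, and the discriminant satisfies $B_n^2-4A_nC_n=(b^2-4ac)(p_{n-1}q_{n-2}-p_{n-2}q_{n-1})^2=b^2-4ac$, independent of $n$. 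The product of the two roots of $A_nx^2+B_nx+C_n$ is $\alpha_{(n)}^H\tilde\alpha_{(n)}^H = C_n/A_n$ and the sum is $-B_n/A_n$; since $|\alpha_{(n)}^H|$ and $|\tilde\alpha_{(n)}^H|$ are both bounded (for $n\ge N$) and $A_n,B_n,C_n\in\mathbb{Z}[i]$ with $|A_n|\ge 1$, the Gaussian integers $A_n,B_n,C_n$ are all bounded in absolute value. A bounded region of $\mathbb{Z}[i]$ is finite, so $(A_n,B_n,C_n)$ takes only finitely many values as $n$ ranges over $n\ge N$; each such triple has at most two roots, so $\{\alpha_{(n)}^H : n\ge N\}$ is a finite set. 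By the pigeonhole principle there exist $m<m'$ with $\alpha_{(m)}^H=\alpha_{(m')}^H$, and since $\alpha_{(n+1)}^H=T_H(\alpha_{(n)}^H)$ is determined by $\alpha_{(n)}^H$, the sequence is periodic from the $m$-th term on, i.e.\ eventually periodic.

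\textbf{Main obstacle.} The delicate point is the uniform bound on the conjugates $\tilde\alpha_{(n)}^H$, which rests on genuine convergence properties of J.~Hurwitz's algorithm for irrational $\alpha$: one needs $|q_{n-1}|\to\infty$, the growth comparison $|q_{n-1}|\gtrsim|q_{n-2}|$, and $p_{n-1}/q_{n-1}\to\alpha$. These are not automatic for complex continued fractions (the partial quotients can conspire against naive estimates), so I would either invoke the convergence results already available for this algorithm (via Tanaka's or Nakada's analysis, cf.\ \cite{ST}, \cite{NA}) or prove the needed estimate directly from the geometry of the regions $Q_w$, which forces $|a_n^H|\ge\sqrt2$ and hence the desired growth. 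Everything else — the algebraic bookkeeping with the discriminant and the finiteness of bounded subsets of $\mathbb{Z}[i]$ — is routine.
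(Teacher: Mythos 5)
The paper does not prove this theorem; it is quoted from J.~Hurwitz \cite{JH} without proof, so there is nothing internal to compare against. Your plan is the standard Lagrange-type argument (trace the M\"obius relation through the convergents, bound the element and its conjugate, conclude that the quadratic polynomials $A_nx^2+B_nx+C_n$ range over a finite set, and apply pigeonhole), and that is indeed the right strategy; it is essentially how Hurwitz and his successors argue. However, as written there is one genuine logical gap beyond the convergence facts you already flagged. The inference ``$|\alpha_{(n)}^H|$ and $|\tilde\alpha_{(n)}^H|$ bounded, $A_n\in\mathbb{Z}[i]$ with $|A_n|\ge 1$, hence $A_n,B_n,C_n$ bounded'' is false as a standalone implication: a quadratic can have both roots in the unit disk while its integer coefficients are arbitrarily large (e.g.\ roots $1\pm 10^{-n}$ with leading coefficient $10^{2n}$). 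Boundedness of the roots only bounds the ratios $B_n/A_n$ and $C_n/A_n$. The constant discriminant $B_n^2-4A_nC_n=D$ gives $|A_n|=\sqrt{|D|}\,/\,|\alpha_{(n)}^H-\tilde\alpha_{(n)}^H|$, so to close the argument this way you would need a uniform lower bound on $|\alpha_{(n)}^H-\tilde\alpha_{(n)}^H|$, which you have not established (both points lie in the closed unit disk, so no such bound is automatic). The standard repair is to bound $|A_n|$ directly: $A_n=q_{n-1}^2 f(p_{n-1}/q_{n-1})=a\,q_{n-1}^2\bigl(p_{n-1}/q_{n-1}-\alpha\bigr)\bigl(p_{n-1}/q_{n-1}-\tilde\alpha\bigr)$, and the approximation quality $|\alpha-p_{n-1}/q_{n-1}|=O(|q_{n-1}|^{-2})$ kills the $q_{n-1}^2$; then $C_n$ is (up to sign) $A_{n-1}$ and $B_n$ is bounded via the discriminant.

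On the convergence inputs: you correctly identify them as the main obstacle, but note that the parenthetical justification ``$|q_{n-1}|\ge|q_{n-2}|$ since $|a_n^H|\ge\sqrt2$'' does not follow from the triangle inequality, because $|a_nq_{n-1}+q_{n-2}|\ge \sqrt2\,|q_{n-1}|-|q_{n-2}|$ only yields $(\sqrt2-1)|q_{n-1}|<|q_{n-1}|$. The strict growth $|q_n|>|q_{n-1}|$ and the estimate $|\alpha-p_n/q_n|=O(|q_n|^{-2})$ are genuine theorems about this algorithm (proved in \cite{JH} and \cite{ST} using the structure of admissible digit sequences, equivalently the fact that $q_{n}/q_{n-1}$ lands in the regions $W_j$), and both are needed: the first for $|q_{n-2}/q_{n-1}|\le 1$ and the second for the coefficient bound above. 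With those two facts imported and the coefficient bound repaired as indicated, your argument is complete.
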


As a corollary, we obtain the following:

\begin{cor}\label{eperiod2}
Let $\alpha \in X \backslash \mathbb{Q}(i)$, and 
assume that $\alpha$ is quadratic over $\mathbb{Q}(i)$. Then, $\{\alpha_{(n)}^T\}$ becomes eventually periodic.
\end{cor}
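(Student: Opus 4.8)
The plan is to deduce Corollary~\ref{eperiod2} directly from Theorem~\ref{eperiod} by comparing the two algorithms orbit-by-orbit, exactly in the spirit of how Corollary~\ref{rationalT} follows from Theorem~\ref{rationalH}. The essential observation is that the Hurwitz map $T_H$ and the Tanaka map $T_T$ differ only on the boundary pieces where $\lfloor\cdot\rfloor_H$ and $\lfloor\cdot\rfloor_T$ disagree, i.e. on the sets where $1/\alpha^T_{(n)}$ lands on one of the half-open edges distinguishing $Q_w$ from the closed square centered at $w$. Away from that measure-zero, explicitly describable boundary set, the two transformations coincide, so $a^H_n = a^T_n$ and $\alpha^H_{(n)} = \alpha^T_{(n)}$.

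First I would set up the following dichotomy for $\alpha \in X\setminus\mathbb{Q}(i)$ quadratic over $\mathbb{Q}(i)$. Either the Tanaka orbit $\{\alpha^T_{(n)}\}$ never meets the exceptional boundary set, in which case it is literally equal to the Hurwitz orbit $\{\alpha^H_{(n)}\}$ (note both start from the same $\alpha$, which is interior to $X$ since it is irrational, hence in $X^o \subseteq Q_0 \cap X$), and eventual periodicity is inherited verbatim from Theorem~\ref{eperiod}; or there is a first index $m_0$ at which $\alpha^T_{(m_0)}$ lies on the exceptional boundary. In the latter case one checks that the value $1/\alpha^T_{(m_0)}$ differs from its Hurwitz counterpart by an element of $(1+i)\mathbb{Z}[i]$, so $\alpha^T_{(m_0+1)}$ is again a quadratic irrational over $\mathbb{Q}(i)$ lying in $X$; crucially, $\alpha^T_{(m_0+1)}$ and $\alpha^H_{(m_0+1)}$ (the Hurwitz iterate of that same point) are then genuinely different points, but both are quadratic irrationals in the relevant domain. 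One then reapplies the dichotomy at $\alpha^T_{(m_0+1)}$.

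The key step is to show this shifting can happen only finitely often, so that the Tanaka orbit eventually stays off the exceptional set forever and from that point on agrees with a Hurwitz orbit, to which Theorem~\ref{eperiod} applies. Here I would use that a fixed quadratic irrational $\alpha$ over $\mathbb{Q}(i)$ has only finitely many ``tails'' $\alpha^T_{(n)}$ that are $\mathrm{GL}_2(\mathbb{Z}[i])$-equivalent to $\alpha$ with bounded height; combined with the fact that each boundary-crossing produces a tail lying on an explicit lower-dimensional real-algebraic locus, while the full Tanaka orbit of a quadratic irrational is a finite set modulo the Möbius action (this is the content behind Corollary~\ref{eperiod2} that one wants, so one must be careful not to argue circularly). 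The clean way is: the set of tails $\{\alpha^T_{(n)}\}$, each being a quadratic irrational whose minimal polynomial over $\mathbb{Z}[i]$ has coefficients of bounded size (a standard consequence of the convergent/continuant estimates, which hold identically for both algorithms since $\|a_n\|\ge\sqrt2$), is finite; a finite orbit is automatically eventually periodic. In fact this observation short-circuits everything: once one knows $\{\alpha^T_{(n)}\}$ is a finite set — which follows from the same bounded-height argument that underlies Theorem~\ref{eperiod} for the Hurwitz case, applied to the Tanaka continuants — eventual periodicity is immediate by the pigeonhole principle.

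The main obstacle is making the bounded-height claim precise for Tanaka's algorithm: one needs the analogue of the growth estimate on denominators $q_n$ of the convergents and the approximation bound $|\alpha - p_n/q_n| \le C/|q_n|^2$, together with the relation $\alpha = (p_n \alpha^T_{(n+1)} + p_{n-1})/(q_n\alpha^T_{(n+1)} + q_{n-1})$, to conclude that the minimal polynomial of $\alpha^T_{(n)}$ over $\mathbb{Z}[i]$ has bounded coefficients; this requires knowing that the Tanaka partial quotients satisfy the same norm lower bound $|a^T_n|\ge\sqrt2$ (true, since $a^T_n \in (1+i)\mathbb{Z}[i]$) and that the orbit does not degenerate into the ``infinite tail of $-1$'' pathology of Corollary~\ref{rationalT} — which cannot occur here because $\alpha$ is irrational over $\mathbb{Q}(i)$. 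Granting these standard facts (or simply invoking that the Tanaka and Hurwitz continuant recursions are identical and differ only in which representative $a_n$ is chosen, a difference bounded by a fixed constant in absolute value), the finiteness of the orbit, and hence Corollary~\ref{eperiod2}, follows. I would present the short pigeonhole argument as the main line and relegate the continuant estimates to a remark or a reference to the corresponding step in the proof of Theorem~\ref{eperiod}.
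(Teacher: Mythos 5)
Your first line of attack contains a step that fails. You claim that after finitely many ``boundary crossings'' the Tanaka orbit stays off the exceptional set forever and thereafter coincides with a Hurwitz orbit. That is false: by Lemma~\ref{Letuin}, once the orbit enters $K_1\cup K_2$ (or $Y_3\cup Y_4$) one has $T_H^4(u)=T_T^4(u)\in K_1\cup K_2$ (resp.\ $Y_3\cup Y_4$), so the orbit returns to the exceptional set every four steps and the two algorithms disagree infinitely often (see the fourth item of Example~\ref{example1}, where the $H$- and $T$-expansions of the same quadratic irrational differ forever). What actually saves the argument --- and is the content of the paper's proof --- is that Lemmas~\ref{backslash} and~\ref{Letuin} force the two orbits to \emph{re-synchronize along the arithmetic progression} $n=m-1+4k$: $\alpha^H_{(m-1+4k)}=\alpha^T_{(m-1+4k)}$ for all $k\ge 1$, where $m$ is the first index of disagreement. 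Since $\{\alpha^H_{(n)}\}$ is eventually periodic by Theorem~\ref{eperiod}, the values $\alpha^H_{(m-1+4k)}$ form a finite set, hence so do the values $\alpha^T_{(m-1+4k)}$, and determinism of $T_T$ plus pigeonhole gives eventual periodicity. Your dichotomy needs to be replaced by this periodic re-synchronization; as written, the ``only finitely many shifts'' claim is the missing (and incorrect) idea.

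Your fallback --- proving finiteness of the set of tails $\{\alpha^T_{(n)}\}$ directly via bounded heights of minimal polynomials --- is a legitimately different route (it reproves Lagrange's theorem for Tanaka's algorithm from scratch rather than transferring it from the Hurwitz case), but as presented it is only a plan: the inputs you call ``standard facts'' (strict growth of the Tanaka continuants $|q_n|$, the approximation bound $|\alpha-p_n/q_n|\le C/|q_n|^2$, and the resulting bound on the coefficients of the minimal polynomial of $\alpha^T_{(n)}$) are exactly the nontrivial content of Tanaka's and J.~Hurwitz's analyses, and the norm bound $|a_n|\ge\sqrt2$ alone does not give $|q_n|\to\infty$ from the recursion $q_n=a_nq_{n-1}+q_{n-2}$. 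If you want to pursue that route you must either prove these estimates or cite them precisely; the paper's four-step synchronization argument avoids them entirely, which is why it is the shorter proof.
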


Let $\alpha$ be a quadratic element over $\mathbb{Q}(i)$, and let $\alpha'$ denote its conjugate over $\mathbb{Q}(i)$.
The following can be regarded as a characterization of purely periodic points.

\begin{thm}[J. Hurwitz\cite{JH}]\label{purlyperiod}
Let $\alpha \in \mathbb{C} \backslash \mathbb{Q}(i)$, and 
assume that $\alpha$ is quadratic over $\mathbb{Q}(i)$ and $\{\alpha_{(n)}^H\}$ becomes purely periodic with
the length $m$.
Then, the sequence
\begin{align*}
(\alpha_{(m)}^H)',(\alpha_{(m-1)}^H)',\ldots,(\alpha_{(1)}^H)',(\alpha_{(m)}^H)',(\alpha_{(m-1)}^H)',\ldots,(\alpha_{(1)}^H)',\ldots
\end{align*}
is generated by the algorithm (Definition \ref{algolD}).
\end{thm}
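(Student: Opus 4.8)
The plan is to exploit the standard duality between a continued fraction map and its ``reversed'' map, made precise through the matrix/convergent formalism. Write each partial quotient of the purely periodic expansion of $\alpha$ as a Möbius transformation: if $a_n^H$ is the $n$-th partial quotient and $M_n = \begin{pmatrix} 0 & 1 \\ 1 & a_n^H\end{pmatrix}$ (so that $\frac{1}{a_n^H + z}$ corresponds to $M_n$ acting on $z$), then purely periodic with period $m$ means $\alpha = (M_1 M_2 \cdots M_m)(\alpha)$, i.e. $\alpha$ is a fixed point of the Möbius transformation given by the product matrix $A = M_1 \cdots M_m$. The other fixed point of $A$ is exactly the conjugate $\alpha'$ of $\alpha$ over $\mathbb{Q}(i)$, since $\alpha,\alpha'$ are the two roots of the quadratic fixed-point equation, which has coefficients in $\mathbb{Z}[i]$.

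Next I would transpose. Taking the transpose of $A = M_1\cdots M_m$ reverses the order: $A^{\mathsf T} = M_m^{\mathsf T}\cdots M_1^{\mathsf T}$, and each $M_n^{\mathsf T} = \begin{pmatrix} 0 & 1 \\ 1 & a_n^H \end{pmatrix}$ is symmetric, so $A^{\mathsf T} = M_m M_{m-1}\cdots M_1$. A transpose has the same eigenvalues as $A$, and one computes directly that if $\beta$ is a fixed point of the Möbius map of $A$, then a related point (essentially $-1/\beta'$ up to the normalization coming from the sign conventions in $\lfloor\cdot\rfloor_H$ versus $\lfloor\cdot\rfloor_d$ and the $D = \{|z|\ge 1\}$ domain) is a fixed point of the Möbius map of $A^{\mathsf T}$. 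Unwinding $A^{\mathsf T} = M_m\cdots M_1$ as an iteration, this fixed point is the purely periodic point of the map that applies $a_m^H, a_{m-1}^H,\ldots,a_1^H$ cyclically. The content of the theorem is then that this reversed periodic expansion, evaluated at the correct conjugate point, is precisely what Definition \ref{algolD} produces — i.e. that the digit $\lfloor \cdot\rfloor_d$ selected by the dual algorithm at $(\alpha_{(m-j+1)}^H)'$ equals $a_{m-j+1}^H$ (up to the sign/conjugation bookkeeping).

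To close that last gap I would argue region by region. One knows $\alpha_{(n+1)}^H = \frac{1}{\alpha_{(n)}^H} - a_n^H \in \overline{X}$, so $\frac{1}{\alpha_{(n)}^H} \in Q_{a_n^H}$, which says $\alpha_{(n)}^H$ lies in the image of the quadrilateral $Q_{a_n^H}$ under inversion. Conjugating the defining inequalities and using that inversion $z\mapsto 1/z$ interacts with conjugation via $\overline{1/z} = 1/\bar z$, one must check that $(\alpha_{(n)}^H)'$ — or rather its image under the appropriate step of the dual iteration — lands in the region $S_{a_n^H}$, i.e. satisfies the ball inequalities $|z - a_n^H| < 1$ and $|z - (a_n^H - i^k(1\pm i))|\ge 1$ defining $S_{a_n^H}$. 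This is where the precise shapes of $Q_w$ (with its half-open boundary pieces) and $S_w$ (balls with the neighboring-ball exclusions) must be matched up: the inversive image of the boundary segments of $Q_w$ are exactly the circular arcs bounding $S_w$, and the half-open conventions were set up in Section 2 precisely so that this correspondence is a bijection. I expect \emph{this geometric matching} to be the main obstacle — it is essentially the statement that the dual algorithm's partition $\{S_w\}$ is the ``inverse-and-conjugate'' image of the partition $\{Q_w\}$, and verifying it requires carefully tracking the four rotational sectors $i^k$ and the two digit-types $L(i^k(1+i))$, $L(i^k 2)$, together with the endpoint conventions. Once that correspondence is established, an induction on $n$ (using periodicity to make it genuinely cyclic, and using that eventual periodicity plus uniqueness of the expansion forces it to be the expansion produced by Definition \ref{algolD}) finishes the proof.
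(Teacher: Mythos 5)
First, a remark on context: the paper does not prove Theorem \ref{purlyperiod} at all — it is quoted from J.~Hurwitz \cite{JH} as a known result (Section 3 only collects statements from that source), so there is no in-paper proof to match your argument against. Judged on its own, your proposal contains the correct algebraic skeleton but has a genuine gap at exactly the point you flag as "the main obstacle."

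The algebraic part is fine: writing the period as a product $A=M_1\cdots M_m$ of symmetric matrices, noting $A^{\mathsf T}=M_m\cdots M_1$, and checking that the fixed points of the Möbius map of $A^{\mathsf T}$ are $-1/\alpha$ and $-1/\alpha'$ correctly explains why the reversed digit string should be attached to a point built from $\alpha'$. The gap is in the geometric step. You propose to locate $(\alpha_{(n)}^H)'$ by "conjugating the defining inequalities" of $Q_{a_n^H}$ and invoking $\overline{1/z}=1/\bar z$. But $\alpha'$ is the \emph{Galois} conjugate of $\alpha$ over $\mathbb{Q}(i)$, not the complex conjugate; the Galois automorphism is not continuous and does not preserve any of the regions $Q_w$, $S_w$, $W_k$. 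Knowing $\tfrac{1}{\alpha_{(n)}^H}\in Q_{a_n^H}$ gives the exact algebraic identity $(\alpha_{(n)}^H)'=1/\bigl((\alpha_{(n+1)}^H)'+a_n^H\bigr)$, but no information whatsoever about where $(\alpha_{(n+1)}^H)'$ sits in $\mathbb{C}$ — and the whole content of the theorem is precisely that $(\alpha_{(n+1)}^H)'$ lands in the cell $S_{-a_n^H}$ of the dual partition (note also the sign: the relevant cell is $S_{-a_n^H}$, not $S_{a_n^H}$), so that $\lfloor\cdot\rfloor_d$ picks out the reversed digit. Indeed, the paper's own Theorem \ref{galois} \emph{deduces} "$(\alpha_{(2)}^H)'\in S_{-a_1^H}$, hence $\alpha'\in W_1$" \emph{from} Theorem \ref{purlyperiod}; your proposal would run that implication backwards without justification.

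The missing ingredient is the standard convergent/attractor argument: expressing $(\alpha_{(n)}^H)'$ through the convergents as
\begin{align*}
(\alpha_{(n)}^H)'=-\frac{q_{n-2}}{q_{n-1}}\cdot\frac{(\alpha_{(1)}^H)'-p_{n-2}/q_{n-2}}{(\alpha_{(1)}^H)'-p_{n-1}/q_{n-1}},
\end{align*}
one shows that since $p_n/q_n\to\alpha\ne\alpha'$, for large $n$ the point $(\alpha_{(n)}^H)'$ is arbitrarily close to $-q_{n-2}/q_{n-1}$, which is itself an orbit point of the dual algorithm lying in $D$; this forces $(\alpha_{(n)}^H)'$ into the correct dual cells for all large $n$, and pure periodicity then propagates the conclusion back to all $n$. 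Without some version of this limiting argument (or an equivalent reduction-theoretic one, as in Hurwitz's original treatment of reduced quadratic forms), the region-by-region matching you describe cannot be carried out, because there is no pointwise geometric relation between $\alpha_{(n)}^H$ and its Galois conjugate.
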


\begin{rem}\label{jHremark}
We remark that
Theorem \ref{purlyperiod} also states that $|(\alpha_{(m)}^H)'| > 1$.
\end{rem}

\section{The natural extension}
Following H. Nakada \cite{NA}, we describe the natural extension for $T_T$.
 Since the only difference from $T_H$ lies in the boundary of the domain, 
 it  becomes the natural extension of $T_H$. 

We define domains $X_j$ for $1\leq j\leq 4$ as follows (see Figure \ref{fig:jh-4}):
\begin{align*}
X_j:=\{z\in X \mid\text{\ and for all $k$ with $0\leq k\leq 2 $\ }|z-i^{k+j-1}(1+i)/2|>\sqrt{2}/2\}.
\end{align*}
Next, we define domains $X_{j+4}$ for $1\leq j\leq 4$ as follows:
\begin{align*}
X_{j+4}:=\{z\in X\mid\text{\ and for all $k$ with $0\leq k\leq 1$\ }|z-i^{k+j-1}(1-i)/2|<\sqrt{2}/2\}.
\end{align*}
We define domains $W_j$ for $1\leq j\leq 4$ as follows (see Figure \ref{fig:jh-5}):
\begin{align*}
W_j:=\{z\in \mathbb{C} \mid |z|>1,|z-i^{j-1}(1-i)|\geq 1\}.
\end{align*}
Next, we define domains $W_{j+4}$ for $1\leq j\leq 4$ as follows:
\begin{align*}
W_{j+4}:=\{z\in \mathbb{C} \mid |z|>1,\text{\ and for all $k$ with $0\leq k\leq 1 $\ }|z-i^{k+j-1}(1-i)|\geq 1\}.
\end{align*}

We define $\hat{X}$ by:
\begin{align*}
\hat{X}:=\bigcup_{1\leq i\leq 8} (\overline{X_i}\times (\overline{W_i}\cup \{\infty\}).
\end{align*}

We define the transformation $\hat{T_T}$ on $\hat{X}$ as follows:
\begin{align*}
 \hat{T_T}(z,w) := \left(T_T(z), \dfrac{1}{w} - a_T(z)\right), \hspace{1cm} \text{for\ } (z,w) \in \hat{X}.
\end{align*}

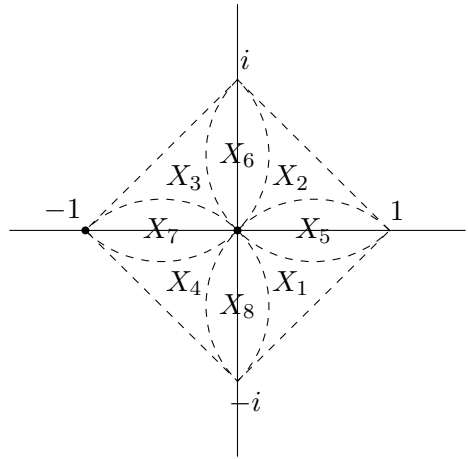
\begin{figure}
    \centering
\begin{tikzpicture}
\draw[dashed] (2,0)--(0,2)--(-2,0)--(0,-2)--(2,0)--cycle;

\draw[dashed] (2,0) arc (45:225:1.4142135);
\draw[dashed] (0,2) arc (135:315:1.4142135);
\draw[dashed] (-2,0) arc (225:405:1.4142135);
\draw[dashed] (0,-2) arc (315:495:1.4142135);

\draw (-3,0)--(3,0);
\draw (0,-3)--(0,3);

\draw (0.7,-0.7)node[]{$X_1$};
\draw (0.7,0.7)node[]{$X_2$};
\draw (-0.7,0.7)node[]{$X_3$};
\draw (-0.7,-0.7)node[]{$X_4$};
\draw (1,0)node[]{$X_5$};
\draw (0,1)node[]{$X_6$};
\draw (-1,0)node[]{$X_7$};
\draw (0,-1)node[]{$X_8$};

\draw (0.1,2)node[above]{$i$};
\draw (0.1,-2)node[below]{$-i$};

\draw (2+0.1,0)node[above]{$1$};
\coordinate (P) at (-2,0);
\fill (P) circle [radius=1.5pt];
\draw (-2-0.3,0)node[above]{$-1$};
\coordinate (O) at (0,0);
\fill (O) circle [radius=1.5pt];

 \end{tikzpicture}
    \caption{$X_j$ ($1\leq j\leq 8$). }
    \label{fig:jh-4}
\end{figure}

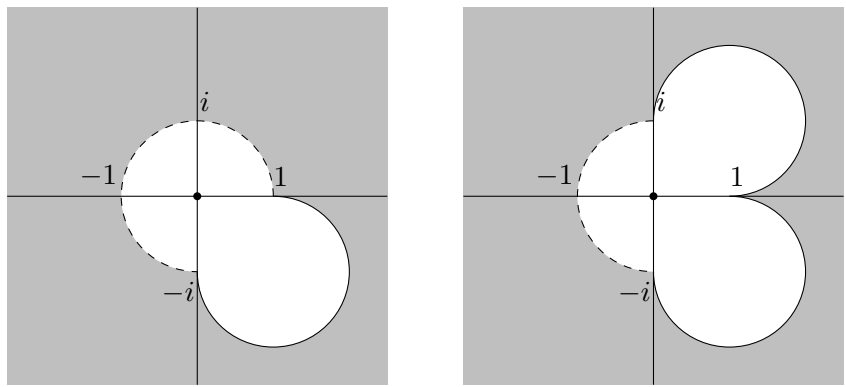
\begin{figure}
    \centering
\begin{tikzpicture}

\fill[lightgray] (-2.5,-2.5) rectangle (2.5,2.5);

\fill[white] (0,0) circle (1);
\fill[white] (1,-1) circle (1);

\draw[dashed] (1,0) arc (0:270:1);
\draw (0,-1) arc (180:450:1);

\draw (0.1,1)node[above]{$i$};
\draw (-0.25,-1)node[below]{$-i$};

\draw (1+0.1,0)node[above]{$1$};
\draw (-1-0.3,0)node[above]{$-1$};
\coordinate (O) at (0,0);
\fill (O) circle [radius=1.5pt];

\draw (-2.5,0)--(2.5,0);
\draw (0,-2.5)--(0,2.5);

\fill[lightgray] (-2.5+6,-2.5) rectangle (2.5+6,2.5);
\fill[white] (0+6,0) circle (1);
\fill[white] (1+6,-1) circle (1);
\fill[white] (1+6,1) circle (1);

\draw (0.1+6,1)node[above]{$i$};
\draw (-0.25+6,-1)node[below]{$-i$};

\draw (1+0.1+6,0)node[above]{$1$};
\draw (-1-0.3+6,0)node[above]{$-1$};
\coordinate (O) at (0+6,0);
\fill (O) circle [radius=1.5pt];

\draw[dashed] (6,1) arc (90:270:1);
\draw (0+6,-1) arc (180:450:1);
\draw (1+6,0) arc (270:540:1);

\draw (-2.5+6,0)--(2.5+6,0);
\draw (0+6,-2.5)--(0+6,2.5);

 \end{tikzpicture}
    \caption{$W_1$ and $W_5$. }
    \label{fig:jh-5}
\end{figure}

Nakada\cite{NA} showed that $\hat{T_T}$ is well-defined and
following theorem holds.
\begin{thm}[Nakada\cite{NA}]\label{nakada}
\begin{enumerate}
\item 
The map $\hat{T_T}$ defined on $\hat{X}$ is bijective except on a set of
Lebesgue measure $0$ and the absolutely continuous invariant probability measure
 $\hat{\mu}$ is given by its density function:
 \begin{align*}
h(z,w)=C_2\cdot \dfrac{1}{|z-w|^4} \hspace{1cm} \text{for\ } (z,w) \in \hat{X}.
\end{align*}
\item 
The map $\hat{T_T}$ is the natural extension of $T_T$. 
\end{enumerate}
\end{thm}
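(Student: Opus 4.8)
The plan is to follow the standard route for building a geometric natural extension, as Nakada does in \cite{NA}. Three things have to be verified: (i) that $\hat{T_T}$ maps $\hat X$ into itself and is bijective off a set of $4$-dimensional Lebesgue measure zero; (ii) that $h(z,w)\,d\lambda(z)\,d\lambda(w)$ is $\hat{T_T}$-invariant and of finite total mass, so that $C_2\in(0,\infty)$ is the reciprocal of that mass; and (iii) that, with the coordinate projection $\pi(z,w)=z$ and $\mu:=\pi_{\ast}\hat\mu$, the triple $(\hat X,\hat{T_T},\hat\mu)$ satisfies the defining properties of the natural extension of $(\overline X,T_T,\mu)$. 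Items (i) and (ii) give the first assertion of the theorem, and item (iii) gives the second.

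Step (i) is the heart of the matter, and it rests on the fibred structure of $\hat{T_T}$. For $z\in\overline X\setminus\{0\}$ one has $1/z\in D$, and $a=a_T(z)$ is the nearest point of the lattice $(1+i)\mathbb Z[i]$ to $1/z$; hence the cylinder $\Delta_a:=\{z\in\overline X\mid a_T(z)=a\}$ is the image of $\overline X$ under the branch $z'\mapsto 1/(z'+a)$, and $T_T$ carries $\Delta_a$ onto $\overline X$, up to boundaries, by $z\mapsto 1/z-a$. The eight regions $\overline{X_1},\dots,\overline{X_8}$ are precisely the unions of those cylinders $\Delta_a$ whose inverse branch sends $\overline X$ into a prescribed one of the eight pieces, and the eight regions $\overline{W_1},\dots,\overline{W_8}$ are chosen so that, for $w$ in the matching $\overline{W_i}\cup\{\infty\}$, the maps $w\mapsto 1/w-a$ reassemble, over the admissible digits $a$, into a partition of $\bigsqcup_{1\le i\le 8}(\overline{W_i}\cup\{\infty\})$. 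Concretely, one shows that for each $(z',w')\in\hat X$ there is exactly one digit $a$ with $\bigl(1/(z'+a),\,1/(w'+a)\bigr)\in\hat X$: the requirement $1/(z'+a)\in\overline X$ confines $a$ to a cofinite set of lattice points, and the requirement that $1/(w'+a)$ lie in the $\overline{W_i}$ carrying the same index as the piece $\overline{X_i}$ containing $1/(z'+a)$ singles out one of them. This is a finite exercise in inversive geometry: the circles $|z|=1$ and $|z-i^{k}(1\pm i)/2|=\sqrt2/2$ bounding the $\overline{X_j}$ correspond, under $w\mapsto 1/w$ composed with lattice translations, to the circles $|w|=1$ and $|w-i^{k}(1\pm i)|=1$ bounding the $\overline{W_j}$, and one checks the bookkeeping case by case across the eight indices. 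This combinatorial-geometric verification is where the real work, and the main obstacle, lies.

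Granting (i), the invariance in (ii) is a one-line change-of-variables identity. Each coordinate map $z\mapsto 1/z-a$ and $w\mapsto 1/w-a$ is holomorphic, so the real Jacobian of $\hat{T_T}$ at $(z,w)$ equals $|z|^{-4}|w|^{-4}$; and if $(z^{\ast},w^{\ast})=\hat{T_T}(z,w)$, then $z^{\ast}-w^{\ast}=(1/z-a)-(1/w-a)=(w-z)/(zw)$, so $|z^{\ast}-w^{\ast}|^{-4}=|z|^{4}|w|^{4}\,|z-w|^{-4}$. Hence $h(z^{\ast},w^{\ast})\,|z|^{-4}|w|^{-4}=h(z,w)$, i.e.\ $h\,d\lambda\,d\lambda$ is preserved by every inverse branch of $\hat{T_T}$, hence by $\hat{T_T}$ itself once (i) is known. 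Finiteness of $\int_{\hat X}|z-w|^{-4}\,d\lambda(z)\,d\lambda(w)$ is then a routine estimate: $z\in\overline X$ forces $|z|\le1$ while $w\in\overline{W_i}$ forces $|w|\ge1$, so the integrand can blow up only where $z=w$, which forces $z=w$ to be one of the four corners $\pm1,\pm i$; near each such corner the regions $\overline{X_i}$ and $\overline{W_i}$ are thin enough that $|z-w|^{-4}$ is locally integrable, and convergence at infinity follows from $\int_{|w|\ge1}|w|^{-4}\,d\lambda(w)<\infty$. This produces $C_2$ and makes $\hat\mu$ a probability measure; uniqueness of the absolutely continuous invariant probability measure, as the statement asserts, follows from ergodicity of $\hat{T_T}$, itself a consequence of the full-branch structure from (i) by a standard bounded-distortion-type argument.

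For (iii) one uses the usual criterion: $(\hat X,\hat{T_T},\hat\mu)$ is the natural extension of $(\overline X,T_T,\mu)$ once $\pi$ is a measure-preserving factor map onto $(\overline X,T_T,\mu)$, $\hat{T_T}$ is invertible mod $\hat\mu$, and $\bigvee_{n\ge0}\hat{T_T}^{\,n}\bigl(\pi^{-1}\mathcal B_{\overline X}\bigr)$ generates the Borel $\sigma$-algebra of $\hat X$ mod $\hat\mu$. The first holds by the definition of $\hat{T_T}$ and the choice $\mu=\pi_{\ast}\hat\mu$, and the second is exactly the bijectivity from (i). For the third, write $\zeta_n:=\pi\bigl(\hat{T_T}^{-n}(z,w)\bigr)$; by the uniqueness of the inverse branch in (i), $\hat{T_T}^{-1}(z,w)=\bigl(1/(z+a),\,1/(w+a)\bigr)$ with $a$ the first partial quotient of the dual expansion of $w$, and, inductively, the $n$-th backward step uses the $n$-th dual partial quotient $a_{-n}$ of $w$, so $\zeta_0=z$ and $\zeta_n=1/(\zeta_{n-1}+a_{-n})$. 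Thus from the sequence $(\zeta_n)_{n\ge0}$ one recovers $z=\zeta_0$ together with every $a_{-n}=\zeta_n^{-1}-\zeta_{n-1}$, hence the entire dual expansion of $w$, hence $w$ itself; therefore $(z,w)\mapsto(\zeta_n)_{n\ge0}$ is injective mod $\hat\mu$, and since $\zeta_n$ generates $\hat{T_T}^{\,n}(\pi^{-1}\mathcal B_{\overline X})$, the join $\bigvee_{n\ge0}\hat{T_T}^{\,n}(\pi^{-1}\mathcal B_{\overline X})$ is all of $\mathcal B_{\hat X}$ mod $\hat\mu$. This establishes the natural-extension property and completes the argument.
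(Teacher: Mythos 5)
First, a point of reference: the paper does not prove this theorem at all --- it is quoted verbatim from Nakada \cite{NA} (``Nakada showed that $\hat{T_T}$ is well-defined and the following theorem holds''), so there is no in-paper proof to compare against. Judged on its own terms, your proposal is a correct outline of the standard strategy (and essentially the strategy Nakada follows), but it is an outline rather than a proof. The entire mathematical content of part 1 is concentrated in your step (i): the verification that the eight products $\overline{X_i}\times(\overline{W_i}\cup\{\infty\})$ are compatible with the branch structure, i.e.\ that for a.e.\ $(z',w')\in\hat X$ exactly one digit $a$ gives $\bigl(1/(z'+a),1/(w'+a)\bigr)\in\hat X$. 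You correctly identify this as ``where the real work lies'' and then defer it to ``a finite exercise in inversive geometry'' and ``bookkeeping case by case.'' That bookkeeping \emph{is} the theorem; the Jacobian identity in (ii) and the natural-extension criterion in (iii) are routine once (i) is granted. Your description of the matching is also imprecise: the $X_i$ are \emph{not} unions of cylinders $\Delta_a$ --- they are cut out by the circles $|z-i^{k}(1\pm i)/2|=\sqrt2/2$, and a single cylinder generally meets several $X_i$. The correct mechanism (cf.\ Lemma \ref{For1} of the paper for the $\widetilde{T_H}$ analogue) is that $w\mapsto 1/w-a$ carries $W_k\cup\{\infty\}$ onto the cell $S_{-a}$ of the tiling in Lemma \ref{tiling}, and one must then check which $W_j$ contains $S_{-a}$ for each admissible pair $(i,a)$; none of this is done.

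A second, more dangerous gap is the finiteness of $\int_{\hat X}|z-w|^{-4}\,d\lambda\,d\lambda$, which you dismiss with ``the regions are thin enough.'' The singular set is exactly the corners $z=w\in\{\pm1,\pm i\}$, where $\overline{X_i}$ (inside the unit circle) and $\overline{W_i}$ (outside it) touch. If both regions opened into genuine two-dimensional cones of positive angle at such a point, the integral would diverge logarithmically: with $\rho_1=|z-1|$, $\rho_2=|w-1|$ and $|z-w|\asymp\sqrt{\rho_1^2+\rho_2^2}$ one gets $\int\!\!\int \rho_1\rho_2(\rho_1^2+\rho_2^2)^{-2}\,d\rho_1\,d\rho_2=\infty$. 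Convergence therefore depends on the fact that at each corner at least one of the two regions is pinched into a cusp between tangent or near-tangent circular arcs, so that its area within radius $\rho$ of the corner is $o(\rho^{2})$. This is a genuine geometric fact about the specific circles defining $X_i$ and $W_i$, not a generic ``thinness,'' and it must be verified; as stated, your estimate does not establish it. In short: the proposal correctly structures the argument and the routine parts ((ii) invariance, (iii) the generation of the $\sigma$-algebra by the backward itineraries) are fine, but the two places where the theorem could actually fail --- the eight-region Markov matching and the integrability at the corners --- are asserted rather than proved.
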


In \cite{NA}, it is shown that $\hat{T_T}$ becomes 
a bijection if we exclude 
$(\bigcup_{1\leq i\leq 8} \partial X_k \times W_k) \cup (\bigcup_{1\leq i\leq 8} X_k \times \partial W_k)$.

Accurately determining purely periodic points requires delicate attention to the boundary of the domain,
  so we have made slight modifications compared to $\hat{X}$.

We define $\widetilde{X}$ by:
\begin{align*}
\widetilde{X}:=\bigcup_{1\leq i\leq 4} (\overline{X_i}\times (W_i\cup \{\infty\})\cup \bigcup_{5\leq i\leq 8} (X_i\times (W_i\cup \{\infty\})).
\end{align*}

We define the transformation $\widetilde{T_H}$ on $\widetilde{X}$ as follows:
\begin{align*} \widetilde{T_H}(z,w) := \left(T_H(z), \dfrac{1}{w} - a_H(z)\right), \hspace{1cm} \text{for\ } (z,w) \in \widetilde{X}. \end{align*}
The fact that $\widetilde{T_H}$ is well-defined can be demonstrated using an argument similar to that for $\hat{T_T}$.
Here, we show that $\widetilde{T_H}$ is injective on the irrational set over $\mathbb{Q}(i)$.
We requires a following lemma.

\begin{lem}\label{For1}
For $1 \leq k \leq 4$, if $\alpha \in L(i^{k-1}(1+i))$,
 the map $z \mapsto \frac{1}{z} - \alpha$ for $z \in \mathbb{C}$ maps $W_k\cup\{\infty\}$ to $S_{-\alpha}$.
For $1 \leq k \leq 4$, if $\alpha \in L(i^{k-1}2)$,
 the map $z \mapsto \frac{1}{z} - \alpha$ for $z \in \mathbb{C}$ maps $W_{k+4}\cup\{\infty\}$ to $S_{-\alpha}$.
\end{lem}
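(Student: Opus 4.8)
The plan is to track how the M\"obius map $\varphi_\alpha(z)=\frac1z-\alpha$ transforms the circular boundary arcs that cut out $W_k$ (resp.\ $W_{k+4}$) into those that cut out $S_{-\alpha}$. First I would record the two defining data: for $\alpha\in L(i^{k-1}(1+i))$ write $\alpha = i^{k-1}(1+i)n$ with $n\in\mathbb{Z}_{>0}$, so $-\alpha\in L(i^{k-1}(1+i))$ as well (since $-n<0$, but actually $-\alpha = i^{k+1}(1+i)n$ lies in $L(i^{k+1}(1+i))$, and one must be careful which $L$-class $-\alpha$ falls in — this bookkeeping about the quadrant index is a point to get right). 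By Lemma~\ref{tiling} and the definition of $S_w$, the target region $S_{-\alpha}$ is described by the inequalities $|z-(-\alpha)|<1$ together with $|z-(-\alpha - i^{k'}(1+i))|\ge 1$ (and a third inequality $|z-(-\alpha-i^{k'}(1-i))|\ge1$ in the $L(i^{k'}2)$ case), where $k'$ is the index of the $L$-class of $-\alpha$.

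Next I would invert: a point $w$ lies in $\varphi_\alpha(W_k)$ iff $w=\frac1z-\alpha$ for some $z\in W_k$, i.e.\ iff $z=\frac1{w+\alpha}\in W_k$. So the claim reduces to the set identity
\begin{align*}
\Bigl\{w : \tfrac1{w+\alpha}\in W_k\cup\{\infty\}\Bigr\} = S_{-\alpha},
\end{align*}
and similarly with $W_{k+4}$. Now $\frac1{w+\alpha}\in W_k$ means, by the definition of $W_k$, the two conditions $\bigl|\frac1{w+\alpha}\bigr|>1$ and $\bigl|\frac1{w+\alpha}-i^{k-1}(1-i)\bigr|\ge1$; the point $\infty$ corresponds to $w=-\alpha$, and one checks $-\alpha\in S_{-\alpha}$ directly (it is the center of the open disk). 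The first condition $\bigl|\frac1{w+\alpha}\bigr|>1$ is equivalent to $|w+\alpha|<1$, i.e.\ $|w-(-\alpha)|<1$, which is exactly the first defining inequality of $S_{-\alpha}$. The second condition is the heart of the matter: $\bigl|\frac1{w+\alpha}-c\bigr|\ge1$ with $c=i^{k-1}(1-i)$. Multiplying through by $|w+\alpha|$ (positive on the relevant set), this is $|1-c(w+\alpha)|\ge|w+\alpha|$, and since $|c|=\sqrt2$, expanding $|c|^2|w+\alpha-\frac1{\bar c}|^2 \ge |w+\alpha|^2$ and completing the square converts it into a condition of the form $|w - p|\ge 1$ for an explicit $p$; the computation should yield $p = -\alpha - i^{k-1}(1+i)$ (using $1/\bar c = (1+i)/2$ when $c=1-i$, and the general rotated version). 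So the inversion sends the ``$\ge1$'' circle-condition of $W_k$ precisely to the ``$\ge1$'' circle-condition of $S_{-\alpha}$, with the center shifted from $-\alpha$ by $-i^{k-1}(1+i)$, matching the definition of $S_w$ for $w\in L(i^{k-1}(1+i))$.

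For the second assertion (the $L(i^{k-1}2)$ case and $W_{k+4}$) the argument is the same but with two ``$\ge1$'' conditions instead of one: $W_{k+4}$ is cut out by $|z|>1$ and $|z-i^{k+j-1}(1-i)|\ge1$ for $k\in\{0,1\}$ in the notation of the text, i.e.\ two rotated copies, and under inversion these map to the two conditions $|w-(-\alpha-i^{k-1}(1+i))|\ge1$ and $|w-(-\alpha-i^{k-1}(1-i))|\ge1$ appearing in the definition of $S_{-\alpha}$ for $-\alpha\in L(i^{k'}2)$. I would do the computation once, in a normalized position (say $k=1$, so $\alpha$ real positive), obtaining the geometric picture that $\varphi_\alpha$ maps the exterior of the unit circle to the interior, maps $\infty\mapsto -\alpha$, and maps each exterior-of-a-unit-circle condition at center $i^{j}(1\pm i)$ to an exterior-of-a-unit-circle condition at center $-\alpha - i^{j}(1\pm i)$; then invoke the rotational symmetry $z\mapsto i^{k-1}z$ (which commutes appropriately with the construction, sending $W_k$ to $W_1$, $S_w$ to $S_{i^{1-k}w}$) to deduce the general $k$. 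The main obstacle is purely the index bookkeeping — verifying that when $\alpha\in L(i^{k-1}(1+i))$ the element $-\alpha$ indeed lies in $L(i^{k+1}(1+i))$ and that the ``auxiliary'' center subtracted in the definition of $S_{-\alpha}$ is the image under inversion of the auxiliary center subtracted in $W_k$; once the normalized computation $|1-c(w+\alpha)|\ge|w+\alpha| \iff |w-p|\ge1$ with $p=-\alpha-\frac{c}{|c|^2}\cdot|c|^2\cdot\frac{1}{?}$ is pinned down correctly, the rest is routine. Attention must also be paid to strict versus non-strict inequalities on the boundary circles, which is precisely why $\widetilde{X}$ was introduced in place of $\hat X$.
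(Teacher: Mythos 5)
Your proposal follows essentially the same route as the paper: the paper also factors the map as inversion followed by translation by $-\alpha$, computes $(W_1\cup\{\infty\})^{-1}=\{z:|z|<1,\ |z-(1+i)|\geq 1\}$ (resp.\ $(W_5\cup\{\infty\})^{-1}$ with the extra circle), translates, identifies the $L$-class of $-\alpha$, and disposes of the remaining cases by rotational symmetry. The only point to fix is the sign in your final bookkeeping: since $-\alpha\in L(i^{k+1}(1+i))$ (as you correctly note earlier), the auxiliary center is $-\alpha-i^{k+1}(1+i)=-\alpha+i^{k-1}(1+i)$, not $-\alpha-i^{k-1}(1+i)$, which is exactly what the explicit inversion computation $|1-cz|\geq|z|\iff|z-\bar c|\geq 1$ for $c=i^{k-1}(1-i)$ produces.
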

\begin{proof}
Let $\alpha \in L((1+i))$.
We easily obtain
\begin{align*}
(W_1 \cup \{\infty\})^{-1} = \{ z \in \mathbb{C} \mid |z| < 1, |z - (1 + i)| \geq 1 \}.
\end{align*}
Therefore, we have
\begin{align*}
(W_1 \cup \{\infty\})^{-1}-\alpha = \{ z \in \mathbb{C} \mid |z+\alpha| < 1, |z+\alpha - (1 + i)| \geq 1 \}.
\end{align*}
Since $-\alpha\in L(-(1+i))$ holds, we have $S_{-\alpha}=(W_1 \cup \{\infty\})^{-1}-\alpha$.\\
Next, Let $\alpha \in L(2)$.
We easily obtain
\begin{align*}
(W_5 \cup \{\infty\})^{-1} = \{ z \in \mathbb{C} \mid |z| < 1, |z - (1 + i)| \geq 1,|z - (1 - i)| \geq 1 \}.
\end{align*}
Therefore, we have
\begin{align*}
(W_5 \cup \{\infty\})^{-1}-\alpha = \{ z \in \mathbb{C} \mid |z+\alpha| < 1, |z+\alpha - (1 + i)| \geq 1,|z+\alpha - (1 - i)| \geq 1 \}.
\end{align*}
Since $-\alpha\in L(-2)$ holds, we have $S_{-\alpha}=(W_5 \cup \{\infty\})^{-1}-\alpha$.\\
The other cases can be proved similarly.
\end{proof}

\begin{lem}\label{injective}
The map $\widetilde{T_H}$  is injective 
on the set $\widetilde{X}\backslash (\mathbb{Q}(i)\times \mathbb{Q}(i))$.
\end{lem}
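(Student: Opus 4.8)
The plan is to show that the first-coordinate map $T_H$ together with the digit $a_H$ determines the preimage under $\widetilde{T_H}$, so injectivity reduces to the observation that on each piece the second coordinate is recovered by inverting $w\mapsto \frac1w - a_H(z)$. Concretely, suppose $\widetilde{T_H}(z_1,w_1)=\widetilde{T_H}(z_2,w_2)=(z^*,w^*)$ with $(z_j,w_j)\in\widetilde X\setminus(\mathbb{Q}(i)\times\mathbb{Q}(i))$. Since $z_j\notin\mathbb{Q}(i)$, in particular $z_j\neq 0$, so $T_H(z_j)=\frac1{z_j}-a_H(z_j)=z^*$. The key point is that $z^*$ and the knowledge that $z^*$ arises as an image determine $a_H(z_1)$ and $a_H(z_2)$ \emph{and} force them to be equal; granting that, $\frac1{z_1}=\frac1{z_2}$, hence $z_1=z_2$, and then from $w^* = \frac1{w_j}-a_H(z_j)$ with equal digits we get $\frac1{w_1}=\frac1{w_2}$, so $w_1=w_2$ (the value $\infty$ being handled as the case $\frac1{w_j}=0$). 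So the whole lemma hinges on recovering the digit from the image coordinate.

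The recovery of the digit is exactly where Lemma~\ref{For1} enters. Here is the mechanism I would use. For $(z_j,w_j)\in\overline{X_k}\times(W_k\cup\{\infty\})$ with $1\le k\le 4$, the admissible digits for that strip are precisely $a_H(z_j)\in L(i^{k-1}(1+i))$ (and for $5\le k\le 8$, $a_H(z_j)\in L(i^{k-1}2)$); this is the content of the definition of the $X_k$ together with the way $\lfloor 1/z\rfloor_H$ is computed, and it is the compatibility that makes $\widetilde{T_H}$ well-defined in the first place. Writing $\alpha_j=a_H(z_j)$, Lemma~\ref{For1} says the second coordinate $w^*=\frac1{w_j}-\alpha_j$ lies in $S_{-\alpha_j}$. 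By Lemma~\ref{tiling}, the regions $S_w$ are pairwise disjoint (and cover $\mathbb{C}\setminus(1+(1+i)\mathbb{Z}[i])$), so $w^*\in S_{-\alpha_1}\cap S_{-\alpha_2}$ forces $-\alpha_1=-\alpha_2$, i.e. $\alpha_1=\alpha_2$, \emph{provided} $w^*\notin 1+(1+i)\mathbb{Z}[i]$. The latter is where irrationality is used: if $w_j=\infty$ then $w^*=-\alpha_j\in(1+i)\mathbb{Z}[i]$, which is disjoint from $1+(1+i)\mathbb{Z}[i]$; if $w_j\neq\infty$ then $w_j\in W_k\subseteq D$ is irrational over $\mathbb{Q}(i)$ exactly when the orbit never hits $\mathbb{Q}(i)$, and $\frac1{w_j}-\alpha_j\in 1+(1+i)\mathbb{Z}[i]$ would put $w^*$ (hence $w_j$) in $\mathbb{Q}(i)$. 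So on $\widetilde X\setminus(\mathbb{Q}(i)\times\mathbb{Q}(i))$ the digit is unambiguously read off from the image.

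Having pinned down $\alpha_1=\alpha_2=:\alpha$, the rest is the short computation above: $\frac1{z_1}=z^*+\alpha=\frac1{z_2}$ gives $z_1=z_2$, and $\frac1{w_1}=w^*+\alpha=\frac1{w_2}$ gives $w_1=w_2$ (reading $\frac1\infty=0$), so $(z_1,w_1)=(z_2,w_2)$. The one subtlety to handle carefully is the strip index: a priori $(z_1,w_1)$ and $(z_2,w_2)$ could sit in different pieces $\overline{X_{k_1}}\times(W_{k_1}\cup\{\infty\})$ and $\overline{X_{k_2}}\times(W_{k_2}\cup\{\infty\})$, and I must make sure the digit-recovery argument still forces $\alpha_1=\alpha_2$ across strips — this is fine because Lemma~\ref{For1} is stated for each $k$ separately and all eight conclusions land in the single disjoint family $\{S_w\}$, so the cross-strip case is absorbed automatically. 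The main obstacle, and the only place demanding real care, is the boundary bookkeeping: one must check that the half-open conventions in the definitions of $X_k$, $W_k$, $Q_w$ and $S_w$ are consistent, so that a point on a shared boundary is assigned to exactly one strip and its digit to exactly one $S_w$; this is precisely why the modified domain $\widetilde X$ (with $\overline{X_i}$ for $i\le4$ but open $X_i$ for $i\ge5$, and $W_i$ rather than $\overline{W_i}$) was introduced, and I would verify that these choices match the half-open sides in the definition of $\lfloor\cdot\rfloor_H$ and $\lfloor\cdot\rfloor_d$. Everything else is routine.
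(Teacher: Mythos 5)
Your proposal is correct and follows essentially the same route as the paper: reduce to showing the digits $a_H(z_1)$ and $a_H(z_2)$ coincide, then use Lemma~\ref{For1} to place the common second image coordinate in $S_{-a_H(z_1)}\cap S_{-a_H(z_2)}$ and invoke the disjointness from Lemma~\ref{tiling}. You in fact supply slightly more detail than the paper (the exclusion of $1+(1+i)\mathbb{Z}[i]$ via irrationality, and the explicit inversion once the digits agree), but the argument is the same.
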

\begin{proof}
We assume that $(x_1,y_1), (x_2,y_2)\in \widetilde{X}\backslash (\mathbb{Q}(i)\times \mathbb{Q}(i))$ and
$\widetilde{T_H}(x_1,y_1)=\widetilde{T_H}(x_2,y_2)$.
If $a_H(x_1)=a_H(x_2)$, it is easy to see  that $(x_1,y_1)=(x_2,y_2)$.
We assume that $a_H(x_1)\ne a_H(x_2)$.
We set $(u,v):=\widetilde{T_H}(x_1,y_1)$.
From Lemma \ref{For1}, we have $v\in S_{-a_H(x_1)} \cap S_{-a_H(x_2)}$, which
 contradicts Lemma \ref{tiling}.
\end{proof}

\section{Lemmas}

\begin{figure}
    \centering
\begin{tikzpicture}
\draw(2,0)--(0,2)--(-2,0)--(0,-2)--(2,0)--cycle;

\draw (2,0) arc (45:225:1.4142135);
\draw (0,2) arc (135:315:1.4142135);
\draw (-2,0) arc (225:405:1.4142135);
\draw (0,-2) arc (315:495:1.4142135);

\draw (-2.5,0)--(2.5,0);
\draw (0,-2.5)--(0,2.5);

\draw (1.2,-1.2)node[]{$Y_1$};
\draw (1.2,1.2)node[]{$Y_2$};
\draw (-1.2,1.2)node[]{$Y_3$};
\draw (-1.2,-1.2)node[]{$Y_4$};
\draw (1,-0.2)node[]{$K_1$};
\draw (0+0.2,1)node[]{$K_2$};
\draw (-1,0+0.2)node[]{$K_3$};
\draw (0-0.2,-1)node[]{$K_4$};

\draw (0.1,2)node[above]{$i$};
\draw (0.1,-2)node[below]{$-i$};

\draw (2+0.1,0)node[above]{$1$};
\coordinate (P) at (-2,0);
\fill (P) circle [radius=1.5pt];
\draw (-2-0.3,0)node[above]{$-1$};
\coordinate (O) at (0,0);
\fill (O) circle [radius=1.5pt];

\draw[->] (1,0)--(1,0.35);
\draw[->] (0,1)--(-0.35,1);
\draw[->] (-1,0)--(-1,-0.35);
\draw[->] (0,-1)--(0.35,-1);

 \end{tikzpicture}
    \caption{$K_j$ and $Y_j$ ($1\leq j\leq 4$). }
    \label{fig:jh-6}
\end{figure}
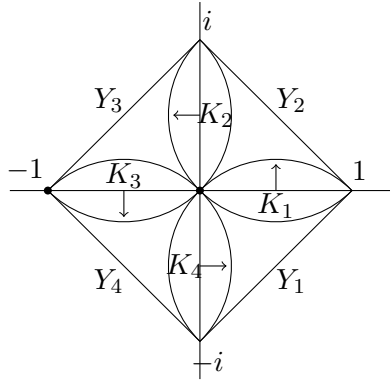

We define sets $K_j$ and $K_j'$ for $1\leq j\leq 4$ as follows:
\begin{align*}
&K_j:=\{z\in C \mid|z-i^{j-1}(1/2-1/2i)|=\dfrac{\sqrt{2}}{2},|z|\leq 1\},\\
&K_j':=\{z\in C\mid |z-i^{j-1}(1/2-1/2i)|=\dfrac{\sqrt{2}}{2},|z|> 1\}.
\end{align*}
We define sets $Y_j$ and $Y_j'$ for $1\leq j\leq 4$ as follows:
\begin{align*}
&Y_j:=\{i^{j-1}(s-it)\mid s,t\in \mathbb{R}, s+t=1,  0\leq s \leq 1\},\\
&Y_j':=\{i^{j-1}(s-it)\mid s,t\in \mathbb{R}, s+t=1,   s\leq 0, t\geq 1\}.
\end{align*}

\begin{lem}[J. Hurwitz\cite{JH}]\label{T_H(Y_1)}
The following holds:
\begin{enumerate}
\item
$T_H(Y_1)=K_4$, $T_H(Y_2)=K_3$, $T_H(Y_3)=K_2$, and $T_H(Y_4)=K_1$. 
\item
$T_H(K_1)=Y_2$, $T_H(K_2)=Y_1$, $T_H(K_3)=Y_4$, and $T_H(K_4)=Y_3$. 
\end{enumerate}

\end{lem}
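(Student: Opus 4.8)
The plan is a hands-on computation with the inversion $z\mapsto 1/z$ together with the tile description of $\lfloor\cdot\rfloor_H$, organised so that only two of the eight identities have to be checked directly. The tiling $\{Q_w\}_{w\in(1+i)\mathbb Z[i]}$ is invariant under multiplication by $i$ (this is exactly what the factors $i^k$ in its definition encode), so $\lfloor iw\rfloor_H=i\lfloor w\rfloor_H$ for every $w$; since $\overline X$ is $i$-invariant as well, this yields $T_H(iz)=-i\,T_H(z)$ for all $z\in\overline X$. Using $Y_{j+1}=iY_j$ and $K_{j+1}=iK_j$ (indices mod $4$), both parts of the lemma then follow once we know
\[
T_H(Y_1)=K_4\qquad\text{and}\qquad T_H(K_1)=Y_2 .
\]

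For $T_H(Y_1)=K_4$ I would write a point of $Y_1$ as $z=s-i(1-s)$ with $s\in[0,1]$, so $Y_1$ is the segment $[-i,1]$ on the line $\mathrm{Re}\,z-\mathrm{Im}\,z=1$. This line avoids $0$, so $z\mapsto 1/z$ carries it to the circle $C$ through $0,1,i$ with centre $(1+i)/2$ and radius $1/\sqrt2$; since the chord $[1,i]$ is a diameter of $C$, the segment $Y_1$ is mapped onto the semicircle $\{w\in C:|w|\ge1\}$, running from $i$ (at $s=0$) through $1+i$ (at $s=\tfrac12$) to $1$ (at $s=1$). The decisive point is that for $s\in(0,1)$ the image $w=1/z$ lies in $X^{o}+(1+i)$, so $\lfloor w\rfloor_H=1+i$: writing $w-(1+i)=U(1+i)+V(1-i)$ one computes $U=-\sigma^2/(1+\sigma^2)$ and $V=\sigma/(1+\sigma^2)$ with $\sigma=2s-1\in(-1,1)$, whence $|U|,|V|<\tfrac12$. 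Hence $T_H(z)=1/z-(1+i)$ on the interior of $Y_1$, and the two identities $|(1/z-(1+i))-i^{3}(1/2-i/2)|=|1/z-(1+i)/2|=1/\sqrt2$ and ($|1/z-(1+i)|\le1\iff|1/z|\ge1$, because $|w-(1+i)|^2=2-|w|^2$ on $C$) show that $T_H(z)$ runs exactly over $K_4$; the endpoints $z\in\{-i,1\}\subset\mathbb Z[i]$ give $T_H(z)=0\in K_4$.

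For $T_H(K_1)=Y_2$ the arc $K_1$ is the part with $|z|\le1$ of the circle $C_1$ through $0,1,-i$ with centre $(1-i)/2$; it passes through $0$, so $z\mapsto 1/z$ sends $K_1\setminus\{0\}$ into the line $\ell:\ \mathrm{Re}\,w+\mathrm{Im}\,w=1$ (through $1$ and $i$), in fact onto $\ell$ with the open chord between $1$ and $i$ removed, i.e. two closed rays. The Gaussian integers on $\ell$ are $p_n=n-(n-1)i$ ($n\in\mathbb Z$), with $p_{n+1}-p_n=1-i$, and they cut $\ell$ into segments $[p_n,p_{n+1}]$. The substantive claim is that for $w$ in the relative interior of $[p_n,p_{n+1}]$ one has $\lfloor w\rfloor_H=n(1-i)$ when $n\ge1$ — $w$ lies there on an edge of $Q_{n(1-i)}$ that the tile actually contains — and the segments with $n\le0$ reduce to these via the reflection $z\mapsto -i\bar z$, which preserves $K_1$ and $\ell$, fixes $Y_2$, and satisfies $T_H(-i\bar z)=i\,\overline{T_H(z)}$. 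Granting this, $w-\lfloor w\rfloor_H$ sweeps out $[p_n,p_{n+1}]-n(1-i)=[i,1]=Y_2$ for each $n$, while the lattice points $p_n$ and the point $z=0$ only contribute $T_H=0$; thus $T_H(K_1)=Y_2$.

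I expect the main obstacle to be the bookkeeping in the third paragraph: since $1/K_1$ is unbounded it meets infinitely many tiles, and one has to pin down the correct $w=\lfloor\cdot\rfloor_H$ on each slab $[p_n,p_{n+1}]$ — including getting the half-open edge conventions of the $Q_w$ right — so that the folded image is precisely $Y_2$ and not a proper sub- or superset. The analogous step for $T_H(Y_1)$ is comparatively easy, the relevant arc being bounded and lying in the single tile $Q_{1+i}$. A final point of care concerns the finitely many boundary points of $\overline X\cap\mathbb Z[i]$ (the arc endpoints $\pm1,\pm i$ and the point $0$): as the paper already notes for the natural extension, these require attention to the conventions for $\lfloor\cdot\rfloor_H$, and the identities above are to be read with that understanding.
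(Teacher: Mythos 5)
The paper states this lemma without proof, attributing it to J.~Hurwitz's 1902 paper, so there is no in-house argument to compare against; judged on its own terms, your proposal is correct. The rotation reduction is sound: from the definition of the tiles one checks $Q_{iw}=iQ_w$, hence $\lfloor iz\rfloor_H=i\lfloor z\rfloor_H$ and $T_H(iz)=-i\,T_H(z)$, and this does propagate the two base cases to all eight identities with the pairings as stated. Both base computations check out: for $Y_1$ the coordinates $U=-\sigma^2/(1+\sigma^2)$, $V=\sigma/(1+\sigma^2)$ are right and do land $1/z$ in $X^o+(1+i)$, and the identity $|w-(1+i)|^2=2-|w|^2$ on $C$ correctly converts $|1/z|\ge 1$ into $|T_H(z)|\le 1$. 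For $K_1$, the claim $\lfloor w\rfloor_H=n(1-i)$ on the relative interior of $[p_n,p_{n+1}]$ for $n\ge 1$ is verified by writing $w-n(1-i)=t+(1-t)i=\tfrac12(1+i)+(t-\tfrac12)(1-i)$, which lies on the half-open edge that $Q_{n(1-i)}$ (with $n(1-i)\in L(i^3(1+i))$) explicitly contains. The one step you flag as delicate, the reflection $z\mapsto -i\bar z$, does go through exactly: a direct check on the edge descriptions shows $i\overline{Q_w}=Q_{i\bar w}$ for every $w$ (the edges of type $b=+\tfrac12$ and $b=-\tfrac12$ are interchanged, matching the change from $L(i^3(1+i))$ to $L(i(1+i))$), so $\lfloor i\bar w\rfloor_H=i\overline{\lfloor w\rfloor_H}$ holds even on tile boundaries and the segments with $n\le -1$ (note: $n\le -1$, not $n\le 0$; the segment $[p_0,p_1]=[i,1]$ is the excluded chord) are genuinely reduced to the $n\ge 1$ case. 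The only residual discrepancy is the one you already identify: because $\lfloor z\rfloor_H=z$ is imposed for $z\in\mathbb{Z}[i]$, the images miss the endpoints of the target arcs and pick up $0$ instead (e.g.\ $T_H(Y_1)=K_4\setminus\{-1,-i\}$ and $T_H(K_1)=(Y_2\setminus\{1,i\})\cup\{0\}$ literally), so the stated equalities hold only up to points of $\mathbb{Q}(i)$; this is a defect of the lemma's formulation rather than of your argument, and is harmless since the paper only ever invokes the lemma at points irrational over $\mathbb{Q}(i)$.
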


Similarly, we have:
\begin{lem}\label{T_T(Y_1)}
The following holds:
\begin{enumerate}
\item
$T_T(Y_3)=K_2$, and $T_T(Y_4)=K_1$. 
\item
$T_T(K_1)=Y_4$, $T_T(K_2)=Y_3$, $T_T(K_3)=Y_4$, and $T_T(K_4)=Y_3$. 
\end{enumerate}
\end{lem}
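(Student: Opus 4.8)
The plan is to reduce \lemref{T_T(Y_1)} to the already-established \lemref{T_H(Y_1)} by analyzing where the two flooring rules $\lfloor\cdot\rfloor_H$ and $\lfloor\cdot\rfloor_T$ differ, and then checking the two boundary arcs that remain. First I would recall that $T_H$ and $T_T$ have the same formula $z\mapsto 1/z-\lfloor 1/z\rfloor_{*}$; they differ only because $\lfloor\cdot\rfloor_H$ and $\lfloor\cdot\rfloor_T$ assign boundary points of the fundamental cells $Q_w$ differently, and in particular because $\lfloor z\rfloor_H\notin(1+i)\mathbb Z[i]$ can happen (for $z\in 1+(1+i)\mathbb Z[i]$) whereas $\lfloor z\rfloor_T$ always lands in $(1+i)\mathbb Z[i]$. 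So for part (1), for $z\in Y_3$ or $z\in Y_4$ I would compute $1/z$ (these are arcs of the lines through $i^{j-1}$ with slope determined by $s+t=1$), determine the cell $Q_w$ containing $1/z$, and verify that on these two arcs the $H$-rule and the $T$-rule pick the \emph{same} integer $w$, so that $T_T=T_H$ there and part (1) of \lemref{T_T(Y_1)} follows immediately from part (1) of \lemref{T_H(Y_1)}. The reason only $Y_3,Y_4$ survive (and not $Y_1,Y_2$) is that $1/Y_1$ and $1/Y_2$ land on the part of the boundary where the two rules disagree — indeed $Y_1\ni 1\mapsto 1\in 1+(1+i)\mathbb Z[i]$, which $T_H$ handles via the non-lattice floor but $T_T$ sends into $-1$ — so I would note that $T_T$ on $Y_1,Y_2$ simply is not described by $K_j$ and is (correctly) omitted from the statement.

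For part (2), the sets $K_1,\dots,K_4$ are arcs of the circles $|z-i^{j-1}(1/2-i/2)|=\sqrt2/2$ lying inside the closed unit disk; for $z$ on such an arc, $1/z$ lies on the corresponding image circle/line, and I would identify the cell $Q_w$ containing $1/z$ for each arc. The key computational point is that $z\mapsto 1/z$ sends the circle through $0$ and $i^{j-1}(1-i)$ (on which $K_j$ sits, since $i^{j-1}(1/2-i/2)$ is its center and $\sqrt2/2$ its radius) to the line through $1/(i^{j-1}(1-i))=i^{-(j-1)}(1+i)/2$ perpendicular to the radius direction — i.e. to a line of the form $Y_m$ or $Y_m'$ translated by a lattice element. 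I would then subtract the appropriate $w=\lfloor 1/z\rfloor_T$ and read off which $Y_m$ (for $m\in\{3,4\}$) results; here the asymmetry $T_T(K_1)=T_T(K_3)=Y_4$ and $T_T(K_2)=T_T(K_4)=Y_3$ (versus the four distinct values in \lemref{T_H(Y_1)}) comes precisely from the $T$-rule collapsing onto lattice floors where the $H$-rule would have produced the ``extra'' values $Y_1,Y_2$; so whenever \lemref{T_H(Y_1)} gives $T_H(K_j)\in\{Y_1,Y_2\}$, the $T$-floor differs by $\pm(1-i)$-type shifts and the image instead lands in $\{Y_3,Y_4\}$, while whenever \lemref{T_H(Y_1)} already gives a value in $\{Y_3,Y_4\}$ the two agree. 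I would verify the four cases one at a time by this explicit Möbius computation, or equivalently by checking that $T_T(K_j)$ and the claimed $Y_m$ have the same pair of endpoints and lie on the same circular arc, which pins down the arc uniquely.

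The main obstacle I anticipate is purely bookkeeping about boundary membership: the sets $Q_w$, $S_w$, $X_j$, $W_j$ are all half-open (some boundary pieces included, some not), so after applying $z\mapsto 1/z-w$ one must check not just that the image of an arc lies on the right circle but that the precise endpoint conventions ($0\le s\le 1$ versus $s\le 0,\ t\ge 1$, and the $<$ versus $\leq$ in the definitions of $Q_w$ and $S_w$) match the definitions of $Y_m$ and $K_m$. In particular, since $1/z$ for $z\in K_j$ may hit a point of $1+(1+i)\mathbb Z[i]$ or a disputed edge of a cell, I would have to track carefully which cell $Q_w$ each endpoint belongs to under the $T$-convention. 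The cleanest write-up will probably treat a single representative case in full (say $K_1$, with $1/z$ on the line through $(1-i)^{-1}=(1+i)/2$) and then invoke the $i$-rotational symmetry of the whole configuration — $Y_{j+1}=iY_j$, $K_{j+1}=iK_j$, and $T_T(iz)$ related to $iT_T(z)$ up to a lattice shift — to get the remaining three cases, exactly as \lemref{T_H(Y_1)} is presumably proved in \cite{JH}.
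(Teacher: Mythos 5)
Your plan is correct and is essentially the argument the paper intends: the paper gives no proof of \lemref{T_T(Y_1)} at all (only the word ``Similarly'', deferring to \lemref{T_H(Y_1)}), and the computations you outline are exactly the ones that do appear later inside the proof of Lemma \ref{Letuin} — for $u\in K_1$ one has $1/u$ on the line $\mathrm{Re}(z)+\mathrm{Im}(z)=1$, where $\lfloor\cdot\rfloor_T=\lfloor\cdot\rfloor_H+(1+i)$, hence $T_T(u)=T_H(u)-(1+i)\in Y_2-(1+i)=Y_4$; for $K_2$ the shift is $(1-i)$ and $Y_3=Y_1-(1-i)$; and on $1/K_3$, $1/K_4$, $1/Y_3$, $1/Y_4$ the two floor rules coincide, so those cases follow directly from \lemref{T_H(Y_1)}. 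One correction to a side remark, which does not affect the proof of the statement as given: your explanation for why $T_T(Y_1)$ and $T_T(Y_2)$ are omitted is not right. Writing $z=u(1+i)+v(1-i)$, one has $v\equiv 1/2$ on all of $Y_1$ and $u\equiv 1/2$ on all of $Y_2$, so these two segments lie in $\overline{X}\setminus X$ and are simply outside the domain of $T_T$; on $1/Y_1=K_2'$ and $1/Y_2=K_1'$ the two floor rules in fact \emph{agree} (e.g.\ $K_2'$ is the part of its circle with $u\in(1/2,1]$ and $|v|<1/2$, where both floors equal $1+i$), so the formula for $T_T$ would still produce $K_4$ and $K_3$ there. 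A further point to watch when writing this up: $K_1^{-1}$ is not just the single ray $Y_2'\cup\{\infty\}$ but the whole complement of the open segment $Y_2$ in its line (two rays plus $\infty$), since $K_1$ contains $0$; your case analysis must cover both rays (the floor shift is the same $1+i$ on both, so the conclusion $T_T(K_1)=Y_4$ is unaffected, but the bookkeeping you anticipate is needed here).
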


\begin{lem}\label{Letuin}
The following holds:
\begin{enumerate}
\item[(1)]
Let $u\in (K_1\cup K_2)\backslash \mathbb{Q}(i)$.
Then, $T_H(u)\ne T_T(u)$, $T_H^2(u)\ne T_T^2(u)$, $T_H^3(u)=T_T^3(u)$, and $T_H^4(u)=T_T^4(u)\in (K_1\cup K_2)$.
\item[(2)]
Let $u\in (Y_3\cup Y_4)\backslash \mathbb{Q}(i)$.
Then, $T_H(u)=T_T(u)$, $T_H^2(u)\ne T_T^2(u)$, $T_H^3(u)\ne T_T^3(u)$, and $T_H^4(u)=T_T^4(u)\in (Y_3\cup Y_4)$.
\end{enumerate}
\end{lem}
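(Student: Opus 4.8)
The plan is to reduce everything to an explicit tracking of a point through both algorithms along the four special curves $K_1,K_2,Y_3,Y_4$ (and their images), using the fact that $T_H$ and $T_T$ differ only on the boundary arcs described in Lemmas~\ref{T_H(Y_1)} and \ref{T_T(Y_1)}. The key structural observation is that $K_1\cup K_2$ and $Y_3\cup Y_4$ are precisely the loci where the two ``floor'' maps $\lfloor\cdot\rfloor_H$ and $\lfloor\cdot\rfloor_T$ can disagree: on $Y_j$ the inversion $1/z$ lands on an arc $K_\ell$ that straddles two quadrilaterals $Q_w$, and on $K_j$ it lands on a segment $Y_\ell$ that straddles two $Q_w$'s. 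So first I would pin down, for $u\in K_1$, the two candidate integer parts $a_H(u)$ and $a_T(u)$; by the conventions fixed in \S2 (the half-open boundary rules defining $Q_w$, and the symmetric rounding defining $\lfloor\cdot\rfloor_T$) these differ by a fixed Gaussian integer, say $a_H(u)-a_T(u)=\varepsilon\in\{\pm(1+i),\pm(1-i),\ldots\}$ depending on which of $K_1,K_2$ (resp. $Y_3,Y_4$) $u$ lies on. This makes $T_H(u)$ and $T_T(u)$ differ by the constant $\varepsilon$, which is why $T_H(u)\ne T_T(u)$ in case (1) and $T_H^2(u)\ne T_T^2(u)$ in case (2).

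Next I would propagate this. Write $x_n^H:=T_H^n(u)$ and $x_n^T:=T_T^n(u)$. In case (1), Lemma~\ref{T_H(Y_1)} and Lemma~\ref{T_T(Y_1)} tell me where $u\in K_1$ (say) gets sent: $T_H(K_1)=Y_2$ while $T_T(K_1)=Y_4$, so after one step the two orbits sit on \emph{different} arcs, and I track each separately. Using part (1) of Lemma~\ref{T_H(Y_1)}, $T_H(Y_2)=K_3$, $T_H(K_3)=Y_4$, $T_H(Y_4)=K_1$, so $T_H^4(u)\in K_1\subset K_1\cup K_2$. Using part (2) of Lemma~\ref{T_T(Y_1)}, $T_T(Y_4)=K_1$, then $T_T(K_1)=Y_4$ again, then $T_T(Y_4)=K_1$: but wait—I must be careful, because after the first $T_T$ step the point has also been translated by $\varepsilon$, so the \emph{curve} it lies on is as the lemmas say, but the \emph{specific points} $x_n^H$ and $x_n^T$ remain distinct exactly until the accumulated translations cancel. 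The heart of the argument is the bookkeeping: the discrepancy $x_n^H-x_n^T$ is a rational function (indeed a Möbius-type combination) of $u$ with Gaussian-integer data, and I must show it vanishes identically at $n=3$ (resp. $n=4$ in case (2)) but not before, for all $u$ off $\mathbb{Q}(i)$. Concretely: after step $1$ they differ by $\varepsilon$; after step $2$ the new difference is $\frac{1}{x_1^H}-\frac{1}{x_1^T}$ adjusted by the (now possibly unequal) integer parts $a_2^H,a_2^T$; one checks that the integer parts at step $2$ are chosen so that the difference persists, and the cancellation is arranged to happen exactly at step $3$. The case $u\in K_2$ is handled by applying $i^{j-1}$-symmetry (rotation by $i$) to the $K_1$ computation, since all the sets $K_j,Y_j$ are rotations of each other and both $T_H,T_T$ commute with multiplication by $i$ in the appropriate sense.

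For case (2), the roles of $K$'s and $Y$'s are swapped: for $u\in Y_3$, Lemma~\ref{T_H(Y_1)}(1) gives $T_H(Y_3)=K_2$ and Lemma~\ref{T_T(Y_1)}(1) gives $T_T(Y_3)=K_2$, so the orbits agree for \emph{one} step (matching the claim $T_H(u)=T_T(u)$); then from $K_2$ they split, since $T_H(K_2)=Y_1$ but $T_T(K_2)=Y_3$, giving $T_H^2(u)\ne T_T^2(u)$. Then I chase: $T_H(Y_1)=K_4$, $T_H(K_4)=Y_3$, so $T_H^4(u)\in Y_3$; and $T_T(Y_3)=K_2$, $T_T(K_2)=Y_3$, so $T_T^4(u)\in Y_3$ as well, and the cancellation of accumulated translations must be verified to occur exactly at step $4$. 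The symmetry reduces $u\in Y_4$ to $u\in Y_3$.

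The main obstacle I expect is the last, quantitative part of the bookkeeping: proving that the accumulated discrepancy is nonzero at the intermediate steps and exactly zero at step $3$ (resp. $4$), uniformly in $u\notin\mathbb{Q}(i)$. This is not formal—it requires knowing the precise integer parts $a_n^H,a_n^T$ along these orbits, which in turn requires locating the images $K_\ell$ (resp. $Y_\ell$) inside the specific quadrilaterals $Q_w$ from \S2 and applying the half-open boundary conventions carefully. I would organize this as a small finite table: for each of the (at most $4+4$, up to rotation reducing to $2+2$) starting arcs, list the sequence of arcs visited and the integer parts used by each algorithm for the first four steps, then verify by direct Möbius-composition that the two partial numerators/denominators coincide at step $3$ (resp. $4$). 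The irrationality hypothesis $u\notin\mathbb{Q}(i)$ is used only to ensure the orbit never hits $0$ (so $T_H,T_T$ are genuinely defined for all four steps) and to stay in the generic interior of each arc where the boundary conventions are unambiguous; I would note this explicitly where it is invoked.
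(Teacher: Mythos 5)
Your proposal is correct and follows essentially the same route as the paper: track both orbits through the arcs via Lemmas \ref{T_H(Y_1)} and \ref{T_T(Y_1)}, record the (differing) integer parts at each step, and verify by explicit computation that the discrepancy cancels exactly at step $3$ in case (1) (resp.\ step $4$ in case (2)), with the $K_2$, $Y_4$ cases handled by rotational symmetry. The ``direct M\"obius-composition'' verification you defer to is carried out in the paper by parametrizing the first images as $T_H(u)=1+s(-1+i)$, $T_T(u)=-i+s(-1+i)$ with $s>0$ and computing the subsequent reciprocals in terms of $s$.
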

\begin{proof}
We shall prove (1).
Let $u\in K_1$.
Since $K_1^{-1}=Y_2'\cup \{\infty\}$, we have $\frac{1}{u}\in Y_2'$.
Therefore, we have 
$\lfloor \frac{1}{u} \rfloor_H+1+i=\lfloor \frac{1}{u} \rfloor_T$, which implies that
$T_H(u)-(1+i)=T_T(u)$, $T_H(u)\in Y_2$, and $T_T(u)\in Y_4$.  
Therefore, there exists $s>0$ such that
\begin{align*}
&T_H(u)=1+s(-1+i),\\
&T_T(u)=-i+s(-1+i).
\end{align*}
We set $v_j:=T_H^j(u)$ and $w_j:=T_T^j(u)$ 
for $j=1,2,\ldots$.
Since $\lfloor \frac{1}{v_1} \rfloor_H=1-i$ and
$\lfloor \frac{1}{w_1} \rfloor_T=-1+i$,
we have
\begin{align*}
&T_H(v_1)=\dfrac{1}{v_1}-(1-i)=\dfrac{(2s-1)((i-1)s-i)}{2s^2-2s+1},\\
&T_T(w_1)=\dfrac{1}{w_1}+(1-i)=\dfrac{(2s-1)(-(i-1)s-1)}{2s^2-2s+1}.
\end{align*}
We obtain
\begin{align*}
&\dfrac{1}{v_2}=\dfrac{s}{1-2s}+\dfrac{i(s-1)}{1-2s},\\
&\dfrac{1}{w_2}=\dfrac{s-1}{2s-1}+\dfrac{is}{2s-1},
\end{align*}
which implies 
\begin{align*}
\dfrac{1}{w_2}-1-i=\dfrac{1}{v_2}.
\end{align*}
Considering $\frac{1}{v_2}\in Y_4'$ and $\frac{1}{w_2}\in Y_2'$,
we have $T_H(v_2)=T_T(w_2)$.
Since $v_3=w_3\in Y_4$, we see that
$\frac{1}{v_3}=\frac{1}{w_3}\in K_3'$ and 
$\lfloor \frac{1}{v_3} \rfloor_H=\lfloor \frac{1}{w_3} \rfloor_T=-1+i$.
Therefore, we have $v_4=w_4\in K_1$.
In the case where $u \in K_2$, the proof can be done similarly. Moreover, for (2),
since it can be proven in the same way as (1), the proof is omitted.
\end{proof}

\begin{lem}\label{backslash}
Let $u\in X \backslash \mathbb{Q}(i)$.
If $T_H(u)\ne T_T(u)$, then $T_H(u)\in Y_3\cup Y_4$ and  $T_H^4(u)=T_T^4(u)\in  K_1\cup K_2$.
\end{lem}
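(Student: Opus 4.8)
The plan is to reduce \lemref{backslash} to \lemref{Letuin} by showing that the hypothesis $T_H(u)\ne T_T(u)$ forces $u$ into one of the boundary sets already analyzed. First I would recall that $T_H$ and $T_T$ differ only through the floor functions $\lfloor\cdot\rfloor_H$ and $\lfloor\cdot\rfloor_T$: for a given $z\ne 0$, $T_H(z)\ne T_T(z)$ exactly when $\lfloor 1/z\rfloor_H\ne\lfloor 1/z\rfloor_T$, i.e. when $1/z$ lands in a region where the quadrilateral tiling $\{Q_w\}$ and the square tiling used for $\lfloor\cdot\rfloor_T$ assign different lattice points. Comparing the definition of $Q_w$ (with its carefully chosen half-open boundary pieces) against the square $\lfloor u+1/2\rfloor(1+i)+\lfloor v+1/2\rfloor(1-i)$, the two disagree precisely on certain boundary segments of the squares — and pulling these back under $z\mapsto 1/z$, together with the constraint $u\in X$ so that $1/z$ lies outside the open unit disk region appropriately, shows that $u$ must lie on one of the arcs $K_j$ or segments $Y_j$. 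More precisely, the disagreement locus for the floor functions, intersected with $\{1/z : z\in X\}$, should be contained in $\bigcup_j (K_j'\cup Y_j')$ type sets, so $u\in\bigcup_j(K_j\cup Y_j)$.

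Next I would pin down which of the $K_j,Y_j$ can actually occur. Using \lemref{T_H(Y_1)} and \lemref{T_T(Y_1)}: on $Y_1$ and $Y_2$ we have $T_H(Y_j)=K_{5-j}=T_T$? — here I'd check that $T_H=T_T$ on $Y_1\cup Y_2$ (both give the same floor there), so these do not contribute to the "$T_H(u)\ne T_T(u)$" case. On $K_1\cup K_2$, part (1) of \lemref{Letuin} gives exactly $T_H(u)\ne T_T(u)$, and indeed $T_H(u)\in Y_2\subset Y_1\cup\cdots$, but the claimed conclusion is $T_H(u)\in Y_3\cup Y_4$ — so I need to be careful: re-reading the proof of \lemref{Letuin}(1), $T_H(u)\in Y_2$ and $T_T(u)\in Y_4$. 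Hmm — so for $u\in K_1$, $T_H(u)\in Y_2$, not $Y_3\cup Y_4$. This means the genuinely relevant boundary set for \lemref{backslash} must be $Y_3\cup Y_4$ itself (the "other half" of \lemref{Letuin}), where by part (2) $T_H(u)=T_T(u)$ at the first step but they split at the second. So the resolution is: the hypothesis $T_H(u)\ne T_T(u)$ with $u\in X$ (as opposed to $\overline X$) together with the half-open conventions eliminates the $K_1\cup K_2$ possibility, leaving $u$ on $Y_3\cup Y_4$, where one then applies \lemref{Letuin}(2) to get $T_H^4(u)=T_T^4(u)\in Y_3\cup Y_4$, and then one more comparison (or \lemref{T_T(Y_1)}: $T_T(Y_j)=K_{?}$ for $j=3,4$) to land in $K_1\cup K_2$.

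So concretely the steps are: (i) characterize $\{z : T_H(z)\ne T_T(z)\}\cap X$ as a subset of the boundary arcs/segments, reducing to the claim that on $X\setminus\mathbb{Q}(i)$ this locus lies in $(K_1\cup K_2\cup Y_3\cup Y_4)\setminus\mathbb{Q}(i)$; (ii) rule out $K_1\cup K_2$ for points in the \emph{open} square $X$ — here the point is that $K_1,K_2$ as defined include the boundary $|z|\le 1$ piece, and the configuration forcing $\lfloor 1/z\rfloor_H\ne\lfloor 1/z\rfloor_T$ actually needs $z$ on a segment of type $Y_j$ once one accounts for which boundary segments of $X$ belong to $X$ versus $\overline X$; (iii) conclude $u\in Y_3\cup Y_4$, whence $T_H(u)=T_T(u)\in K_2\cup K_1$ by \lemref{T_T(Y_1)}(1) (so in particular $T_H(u)\in Y_3\cup Y_4$ is actually \emph{not} what happens at step 1 — rather $T_H(u)\in K_1\cup K_2$)...

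Let me restate the plan cleanly, since the indices need care.

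\medskip

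\noindent\textbf{Proof plan.} The strategy is to identify the set of $u\in X\setminus\mathbb{Q}(i)$ with $T_H(u)\ne T_T(u)$ and match it against the boundary pieces analyzed in \lemref{T_H(Y_1)}, \lemref{T_T(Y_1)} and \lemref{Letuin}.

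First I would show: if $u\in X\setminus\mathbb{Q}(i)$ and $T_H(u)\ne T_T(u)$, then $u\in (Y_3\cup Y_4)\setminus\mathbb{Q}(i)$. Since $T_H$ and $T_T$ use the same map $z\mapsto 1/z$ and differ only in the integer part, $T_H(u)\ne T_T(u)$ is equivalent to $\lfloor 1/u\rfloor_H\ne\lfloor 1/u\rfloor_T$. Comparing the quadrilateral decomposition $\mathbb{C}=\bigsqcup_w Q_w$ with the square decomposition defining $\lfloor\cdot\rfloor_T$: the two integer parts agree except when $1/u$ lies on the relevant half-open boundary strips of the squares, and intersecting this disagreement locus with $\{1/u : u\in X\}$ pulls back, under inversion, to a union of sub-arcs of the circles $|z-i^{j-1}(1/2-i/2)|=\sqrt2/2$ and sub-segments of the lines through the $i^{j-1}$-rotates of the segment from $1$ to $-i$. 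Tracking the half-open conventions in the definition of each $Q_w$ and the requirement $u\in X$ (the \emph{open}-type square, not $\overline X$) shows the only surviving pieces inside $X$ are $Y_3$ and $Y_4$; the analogues $Y_1,Y_2$ are excluded because there $\lfloor 1/u\rfloor_H=\lfloor 1/u\rfloor_T$ (the conventions in $Q_w$ for $w\in L(i^k2)$ match the square there), and the arcs $K_1,K_2$ meet $X$ only where $T_H=T_T$ as well. This is the step I expect to be the main obstacle: it is an elementary but fiddly case check over the eight boundary pieces, where every inequality "$\le$" vs "$<$" in the definitions of $Q_w$, $X$, $K_j$, $Y_j$ matters, and one must be scrupulous about which boundary segments of $X$ lie in $X$.

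Once $u\in(Y_3\cup Y_4)\setminus\mathbb{Q}(i)$ is established, the rest is immediate: \lemref{Letuin}(2) gives $T_H(u)=T_T(u)$, $T_H^2(u)\ne T_T^2(u)$, $T_H^3(u)\ne T_T^3(u)$, and $T_H^4(u)=T_T^4(u)\in Y_3\cup Y_4$. It remains to note $T_H(u)\in Y_3\cup Y_4$ and $T_H^4(u)\in K_1\cup K_2$, so I would refine the endpoint description: reinspecting the proof of \lemref{Letuin}(2) (which parallels that of \lemref{Letuin}(1), itself spelled out explicitly), for $u\in Y_3$ one has $T_H(u)=T_T(u)\in Y_4\subset Y_3\cup Y_4$; iterating, $v_1\in Y_4$, then $v_2,v_3$ stay in the appropriate $Y$- or $K$-pieces, and $v_4=w_4\in K_1$ — with the symmetric statement $K_2$ for $u\in Y_4$. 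Thus $T_H^4(u)=T_T^4(u)\in K_1\cup K_2$, as claimed. Since the full orbit structure on $Y_3\cup Y_4$ was already computed in \lemref{T_H(Y_1)}, \lemref{T_T(Y_1)}, and \lemref{Letuin}(2), no new computation beyond bookkeeping is needed here.
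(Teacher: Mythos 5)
Your reduction goes wrong at its central step. The conclusion of \lemref{backslash} is a statement about the image $T_H(u)$, not about $u$ itself, and your plan pivots to proving ``$u\in (Y_3\cup Y_4)\setminus\mathbb{Q}(i)$'', which is both a different claim and impossible: by \lemref{Letuin}(2), which you then intend to invoke, $T_H(u)=T_T(u)$ for every $u\in(Y_3\cup Y_4)\setminus\mathbb{Q}(i)$, so no such $u$ satisfies the hypothesis $T_H(u)\ne T_T(u)$. Your step (ii), ``rule out $K_1\cup K_2$'', is exactly backwards: $K_1$ and $K_2$ are arcs through the \emph{interior} of the square (they meet $\partial X$ only at the Gaussian--rational vertices), so $(K_1\cup K_2)\setminus\mathbb{Q}(i)\subset X$, and by \lemref{Letuin}(1) these are precisely points with $T_H(u)\ne T_T(u)$; they cannot be excluded. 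A second, independent gap is your claim that $\{u\in X: \lfloor 1/u\rfloor_H\ne\lfloor 1/u\rfloor_T\}$ is contained in the eight named arcs and segments. The disagreement locus of the two floor functions is a union of \emph{infinitely many} lattice edges $Y_1+w$, $Y_2+w$ (for instance the segment from $3$ to $2+i$, which lies in $Q_2$ but is assigned to $3+i$ by $\lfloor\cdot\rfloor_T$), and its preimage under $z\mapsto 1/z$ is an infinite family of circular arcs inside $X$, almost none of which are among the $K_j$ or $Y_j$. So no argument confining $u$ to finitely many boundary pieces can work.

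The paper's mechanism, which your plan misses, is that it does not matter \emph{which} disagreement edge $1/u$ lies on: since every such edge is a translate of an edge of $\overline{X}$, subtracting the two competing floors sends $1/u$ to two edges of $\overline{X}$ itself, i.e.\ $T_H(u)$ and $T_T(u)$ each land in $\bigcup_{1\le j\le 4}Y_j$, and from there the explicit four-step computation of \lemref{Letuin} applies verbatim to give $T_H^4(u)=T_T^4(u)\in K_1\cup K_2$. (You in fact noticed the right tension --- the proof of \lemref{Letuin}(1) shows $T_H(u)\in Y_2$ for $u\in K_1$, which sits uneasily with the label $Y_3\cup Y_4$ in the statement of \lemref{backslash}; the consistent reading, borne out by direct computation and by how the lemma is later used, is that $T_H(u)\in Y_1\cup Y_2$ and $T_T(u)\in Y_3\cup Y_4$, the labels in the printed statement being swapped --- but you resolved the tension by excluding $K_1\cup K_2$ rather than by tracking the image.) Finally, your endpoint step also fails as written: \lemref{Letuin}(2) concludes $T_H^4(u)=T_T^4(u)\in Y_3\cup Y_4$, not $K_1\cup K_2$, and the ``refinement'' you sketch ($v_4=w_4\in K_1$ for $u\in Y_3$) contradicts the very statement you cite; the landing set $K_1\cup K_2$ comes from running the \lemref{Letuin}(1)-type computation starting from the $Y$-edges reached \emph{after} one step, not from part (2) applied to $u$.
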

\begin{proof}
We have 
$\lfloor \frac{1}{u} \rfloor_H\ne\lfloor \frac{1}{u} \rfloor_T$.
Then, we have $\frac{1}{u}\in Y_1'\cup Y_2'$, which implies $T_H(u)\in Y_3\cup Y_4$ and $T_T(u)\in Y_1\cup Y_2$.
Then, similarly to the proof of Lemma \ref{Letuin}, we obtain $T_H^4(u) = T_T^4(u)\in (K_1\cup K_2)$.
\end{proof}

For a quadratic irrational number over $\mathbb{Q}(i)$, certain restrictions
 are imposed on its domain of existence.
We will now present several related lemmas.

\begin{lem}\label{field1}
Let $m, n \in \mathbb{Z}$, $\sqrt{m^2+n^2} \notin \mathbb{Q}$, and $\sqrt{m+ni} \notin \mathbb{Q}(i)$.
 Let  $w \in \mathbb{Q}(\sqrt{m+ni})$ and $w \notin \mathbb{Q}(i)$.
 Then,  $1$, $Re(w)$, and $Im(w)$ are linearly independent over $\mathbb{Q}$, where
$Re(w)$ denotes the real part of $w$, and $Im(w)$ denotes the imaginary part of $w$.
\end{lem}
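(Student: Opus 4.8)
# Proof Proposal for Lemma \ref{field1}

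\textbf{Proof proposal.}
The plan is to reduce everything to a statement about the real and imaginary parts of $\delta:=\sqrt{m+ni}$, and then to handle a general $w$ by a linear-algebra argument over $\mathbb{Q}(i)$. First I would record the basic set-up. Since $\sqrt{m^2+n^2}\notin\mathbb{Q}$, we cannot have $n=0$ (otherwise $\sqrt{m^2+n^2}=|m|\in\mathbb{Q}$), so $n\neq0$. Write $\delta=p+qi$ with $p,q\in\mathbb{R}$; squaring gives $p^2-q^2=m$ and $2pq=n$, hence $p\neq0$, $q\neq0$, and $(p^2+q^2)^2=(p^2-q^2)^2+(2pq)^2=m^2+n^2$, so that $p^2+q^2=\sqrt{m^2+n^2}\notin\mathbb{Q}$. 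Note also that $p\notin\mathbb{Q}$ and $q\notin\mathbb{Q}$: if $p\in\mathbb{Q}$ then $q=n/(2p)\in\mathbb{Q}$ and $p^2+q^2\in\mathbb{Q}$, a contradiction, and symmetrically for $q$.

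The heart of the proof is the claim that $1,p,q$ are linearly independent over $\mathbb{Q}$. I would argue by contradiction: suppose $\lambda+\mu p+\nu q=0$ with $(\lambda,\mu,\nu)\in\mathbb{Q}^3\setminus\{0\}$. Using $p,q\notin\mathbb{Q}$ one first rules out $\mu=0$ and $\nu=0$, so $\mu\nu\neq0$. Then solve $p=-(\lambda+\nu q)/\mu$ and substitute into the two defining relations to get two quadratic equations in $q$ with rational coefficients,
\begin{align*}
2\nu q^{2}+2\lambda q+n\mu&=0,\\
(\nu^{2}-\mu^{2})q^{2}+2\lambda\nu q+(\lambda^{2}-m\mu^{2})&=0.
\end{align*}
Eliminating $q^{2}$ between these yields a relation of the form $\tfrac{\lambda(\mu^{2}+\nu^{2})}{\nu}\,q + (\text{rational})=0$. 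Since $\mu^{2}+\nu^{2}>0$, if $\lambda\neq0$ this forces $q\in\mathbb{Q}$, a contradiction; hence $\lambda=0$. But then the first quadratic gives $q^{2}=-n\mu/(2\nu)\in\mathbb{Q}$, whence $p^{2}=\nu^{2}q^{2}/\mu^{2}\in\mathbb{Q}$ and $p^{2}+q^{2}\in\mathbb{Q}$, contradicting $p^{2}+q^{2}=\sqrt{m^2+n^2}\notin\mathbb{Q}$. This proves the claim. I expect this elimination argument, together with correctly disposing of the degenerate case $\lambda=0$, to be the main (and essentially only) obstacle; the rest is formal.

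Finally I would pass to a general $w$. Since $\sqrt{m+ni}\notin\mathbb{Q}(i)$, the field $\mathbb{Q}(i)(\delta)$ has $\{1,\delta\}$ as a $\mathbb{Q}(i)$-basis, and $\mathbb{Q}(\sqrt{m+ni})\subseteq\mathbb{Q}(i)(\delta)$, so we may write $w=a+b\delta$ with $a=a_1+a_2i$, $b=b_1+b_2i\in\mathbb{Q}(i)$; moreover $b\neq0$ because $w\notin\mathbb{Q}(i)$. Then $\mathrm{Re}(w)=a_1+b_1p-b_2q$ and $\mathrm{Im}(w)=a_2+b_2p+b_1q$. If $\alpha+\beta\,\mathrm{Re}(w)+\gamma\,\mathrm{Im}(w)=0$ with $\alpha,\beta,\gamma\in\mathbb{Q}$, collecting the coefficients of $1,p,q$ and invoking the linear independence of $1,p,q$ gives $\beta b_1+\gamma b_2=0$ and $\gamma b_1-\beta b_2=0$; the determinant of this system is $b_1^{2}+b_2^{2}=|b|^{2}\neq0$, so $\beta=\gamma=0$, and then $\alpha=0$. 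Hence $1,\mathrm{Re}(w),\mathrm{Im}(w)$ are linearly independent over $\mathbb{Q}$, as required.
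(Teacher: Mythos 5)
Your proof is correct, but it takes a genuinely different route from the paper. You work entirely in real coordinates: writing $\delta=\sqrt{m+ni}=p+qi$, you extract the relations $p^2-q^2=m$, $2pq=n$, $p^2+q^2=\sqrt{m^2+n^2}$, and prove by explicit elimination of $q^2$ between two rational quadratics that $1,p,q$ are $\mathbb{Q}$-linearly independent (your elimination is right: the coefficient of $q$ is a nonzero rational multiple of $\lambda(\mu^2+\nu^2)$, forcing $\lambda=0$, and the degenerate case then contradicts $p^2+q^2\notin\mathbb{Q}$); the passage to general $w=a+b\delta$ via the determinant $b_1^2+b_2^2=|b|^2\neq0$ is clean. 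The paper instead argues field-theoretically: it sets $\beta=\sqrt{m-ni}=\overline{\alpha}$, uses the Galois automorphism of $\mathbb{Q}(\alpha)/\mathbb{Q}(i)$ together with $\alpha\beta=\sqrt{m^2+n^2}\notin\mathbb{Q}$ to show $\beta\notin\mathbb{Q}(\alpha)$, concludes that $1,\alpha,\beta,\alpha\beta$ form a $\mathbb{Q}(i)$-basis of $\mathbb{Q}(i,\alpha,\beta)$, and then reads off the linear independence from the expressions $Re(w)=\tfrac{w+\overline{w}}{2}$, $Im(w)=\tfrac{w-\overline{w}}{2i}$ in that basis. The two core facts are essentially equivalent (a $\mathbb{Q}$-relation among $1,p,q$ is a $\mathbb{Q}(i)$-relation among $1,\alpha,\beta$), so each proof is a disguise of the other; yours buys elementarity at the cost of a computation, while the paper's isolates the structural statement $[\mathbb{Q}(i,\alpha,\overline{\alpha}):\mathbb{Q}(i)]=4$, which is the cleaner thing to cite and reuse.
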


\begin{proof}
 Since $n \neq 0$, we have $\mathbb{Q}(i) \subset \mathbb{Q}(\sqrt{m+ni})$.
  Let $\alpha = \sqrt{m+ni}$. From the fact $\alpha \notin \mathbb{Q}(i)$, we see that $\mathbb{Q}(\alpha)$ is a quadratic extension over $\mathbb{Q}(i)$.
Let $\beta = \sqrt{m-ni}$ such that $\alpha \beta = \sqrt{m^2+n^2}$.
Then, $\beta$ is the complex conjugate of $\alpha$.  
We will show that $\beta \notin\mathbb{Q}(\alpha)$.
We assume that $\beta \in\mathbb{Q}(\alpha)$.
Then, since $\sqrt{m^2+n^2}=\alpha\beta$, we have $\sqrt{m^2+n^2} \in\mathbb{Q}(\alpha)$.
Therefore, there exist $a+bi,\ c+di\in \mathbb{Q}(i)$$(a,b,c,d\in \mathbb{Q})$ such that
\begin{align}\label{m^2+n^2}
\sqrt{m^2+n^2}=a+bi+(c+di)\alpha.
\end{align}
In  $\mathbb{Q}(\alpha)/\mathbb{Q}(i)$,
let $\sigma$ be the automorphism over $\mathbb{Q}(i)$ such that $\sigma(\alpha) = -\alpha$.
From (\ref{m^2+n^2}), we have
$\sigma(\sqrt{m^2+n^2})=a+bi-(c+di)\alpha$.
If $\sigma(\sqrt{m^2+n^2})=\sqrt{m^2+n^2}$ holds, we get $\sqrt{m^2+n^2}=a+bi$, which 
contradicts $\sqrt{m^2+n^2} \notin \mathbb{Q}$.
Hence, we have $\sigma(\sqrt{m^2+n^2})=-\sqrt{m^2+n^2}$.
We have $\sqrt{m^2+n^2}=(c+di)\alpha$, which implies 
$\beta=c+di$.
Since $\alpha$ is the complex conjugate of $\beta$, 
we have $\alpha=c-di$, which contracts $\alpha \notin \mathbb{Q}(i)$.
Therefore, $\beta \notin\mathbb{Q}(\alpha)$.
Since $[\mathbb{Q}(\alpha,\beta),\mathbb{Q}(\alpha)]=2$ and $[\mathbb{Q}(\alpha),\mathbb{Q}(i)]=2$,
$1, \alpha, \beta, \alpha\beta$ form a basis of $\mathbb{Q}(i, \alpha, \beta)$ over $\mathbb{Q}(i)$.
\\
Let $w\in \mathbb{Q}(\alpha)$ and $w\notin \mathbb{Q}(i)$.
Then, there exist $a+bi,\ c+di\in \mathbb{Q}(i)$$(a,b,c,d\in \mathbb{Q})$ such that
\begin{align}\label{a+bi+}
w=a+bi+(c+di)\alpha.
\end{align}
The complex conjugate of $w$ (denoted by $\overline{w}$) is given by
\begin{align*}
\overline{w} = a - bi + (c - di)\beta.
\end{align*}
Then, we have
\begin{align}
&Re(w)=\dfrac{w+\overline{w}}{2}=a+\dfrac{c+di}{2}\alpha+\dfrac{c-di}{2}\beta,\label{re(w)}\\
&Im(w)=\dfrac{w-\overline{w}}{2i}=b+\dfrac{c+di}{2i}\alpha-\dfrac{c-di}{2i}\beta.\label{im(w)}
\end{align}
We assume that $p\cdot re(w)+q\cdot im(w)+r=0$ for some $p,q,r\in \mathbb{Q}$.
From (\ref{re(w)}) and (\ref{im(w)}), we have
\begin{align*}
pa+qb+r+(p+q/i)\dfrac{c+di}{2}\alpha+(p-q/i)\dfrac{c-di}{2}\beta=0.
\end{align*}
Since $1, \alpha, \beta$ are linearly independent over $\mathbb{Q}(i)$,
we have
\begin{align*}
pa+qb+r=0,\  (p+q/i)\dfrac{c+di}{2}=0,\  (p-q/i)\dfrac{c-di}{2}=0.
\end{align*}
If it holds that $\frac{c+di}{2}=0$ or $\frac{c-di}{2}=0$, then we have $w=a+bi$, which contradicts 
$w\notin \mathbb{Q}(i)$.
Therefore, we have $pa+qb+r=0$, $p+q/i=0$ and $p-q/i=0$, which implies $p=q=r=0$.

\end{proof}

\begin{lem}\label{field2}
 Let $m \in \mathbb{Z}_{>0}$ with $\sqrt{m} \notin \mathbb{Q}$.
  For $w \in \mathbb{Q}(i, \sqrt{m})$ and $w \notin \mathbb{Q}(i)$,
   if there exist $k_j \in \mathbb{Q}$ $(j=1,2,3)$
    such that $k_1 Re(w) + k_2 Im(w) + k_3 = 0$, then for $w'$,
    the conjugate of $w$ over $\mathbb{Q}(i)$, we have $k_1 Re(w') + k_2 Im(w') + k_3 = 0$.
\end{lem}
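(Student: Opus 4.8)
The plan is to expand $w$ in an explicit $\mathbb{Q}$-basis of $\mathbb{Q}(i,\sqrt m)$, separate the real and imaginary parts so that the only irrationality appearing is $\sqrt m$, and then use that $1$ and $\sqrt m$ are linearly independent over $\mathbb{Q}$.

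First I would observe that $\sqrt m$ is real and, since $\sqrt m\notin\mathbb{Q}$, it does not lie in $\mathbb{Q}(i)$; hence $x^2-m$ is the minimal polynomial of $\sqrt m$ over $\mathbb{Q}(i)$, so $[\mathbb{Q}(i,\sqrt m):\mathbb{Q}(i)]=2$ and $\{1,i,\sqrt m,i\sqrt m\}$ is a $\mathbb{Q}$-basis of $\mathbb{Q}(i,\sqrt m)$. Therefore there are unique $a,b,c,d\in\mathbb{Q}$ with $w=a+c\sqrt m+(b+d\sqrt m)i$, so that $Re(w)=a+c\sqrt m$ and $Im(w)=b+d\sqrt m$. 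The conjugate $w'$ of $w$ over $\mathbb{Q}(i)$ is the image of $w$ under the automorphism of $\mathbb{Q}(i,\sqrt m)/\mathbb{Q}(i)$ sending $\sqrt m\mapsto -\sqrt m$ and fixing $i$, hence $w'=a-c\sqrt m+(b-d\sqrt m)i$, i.e. $Re(w')=a-c\sqrt m$ and $Im(w')=b-d\sqrt m$.

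Next I would substitute these expressions into the hypothesis $k_1Re(w)+k_2Im(w)+k_3=0$, which becomes $(k_1a+k_2b+k_3)+(k_1c+k_2d)\sqrt m=0$ with both coefficients rational; the irrationality of $\sqrt m$ then forces $k_1a+k_2b+k_3=0$ and $k_1c+k_2d=0$. Finally, $k_1Re(w')+k_2Im(w')+k_3=(k_1a+k_2b+k_3)-(k_1c+k_2d)\sqrt m=0$, which is the claim.

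There is essentially no obstacle here: the statement is the commutative counterpart of the computation in Lemma~\ref{field1}, where the conjugation does not fix $\mathbb{Q}(i)$ pointwise and so forces the more involved basis argument with $1,\alpha,\beta,\alpha\beta$. The only points worth a word of care are that the relevant conjugation acts by $\sqrt m\mapsto-\sqrt m$ and fixes $i$, and that in reading off $Re$ and $Im$ one must remember that $i\sqrt m$ contributes to the imaginary part, not the real part.
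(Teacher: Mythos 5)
Your proof is correct and follows essentially the same route as the paper: write $w=a+bi+(c+di)\sqrt m$ with $a,b,c,d\in\mathbb{Q}$, read off $Re(w)=a+c\sqrt m$ and $Im(w)=b+d\sqrt m$, use the irrationality of $\sqrt m$ to split the hypothesis into $k_1a+k_2b+k_3=0$ and $k_1c+k_2d=0$, and conclude for $w'$. Your extra remark that $x^2-m$ is the minimal polynomial over $\mathbb{Q}(i)$ (so the decomposition is well defined and the conjugation fixes $i$) is a small point the paper leaves implicit.
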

\begin{proof}
 Let $w=a+bi+(c+di)\sqrt{m}$, where $\ a,b,c,d\in \mathbb{Q}$.
 Then, we have $Re(w)=a+c\sqrt{m},\ Im(w)=b+d\sqrt{m}$.
Therefore, we have $k_1(a+c\sqrt{m})+k_2(b+d\sqrt{m})+k_3=0$, which implies
that $k_1a+k_2b+k_3=0$ and  $k_1c+k_2d=0$.
Since $w'=a+bi-(c+di)\sqrt{m}$,
we have $Re(w')=a-c\sqrt{m},\ Im(w')=b-d\sqrt{m}$.
Then, we have $k_1 Re(w') + k_2 Im(w') + k_3 = k_1a+k_2b+k_3-(k_1c+k_2d)\sqrt{m}=0$. 
\end{proof}

\begin{lem}\label{field3}
 Let $m,n \in \mathbb{Z}$ such that $\sqrt{m^2+n^2}\in \mathbb{Z}_{>0}$ and $\sqrt{m+ni}\notin \mathbb{Q}(i)$.
 Then, there exists $l\in \mathbb{Z}_{>0}$ such that $\sqrt{l}\notin \mathbb{Z}$ and $\mathbb{Q}(i,\sqrt{m+ni})=\mathbb{Q}(i,\sqrt{l})$. 
 \end{lem}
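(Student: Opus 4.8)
The plan is to work out the arithmetic of $\mathbb{Q}(i,\sqrt{m+ni})$ by rationalizing $\sqrt{m+ni}$ over $\mathbb{Q}(i)$. Write $r=\sqrt{m^2+n^2}\in\mathbb{Z}_{>0}$, so that $\alpha^2=m+ni$ with $|\alpha^2|=r$. The idea is to multiply $\alpha$ by a suitable element of $\mathbb{Q}(i)$ to land inside $\mathbb{Q}(i,\sqrt{l})$ for a squarefree (or at least non-square) positive rational integer $l$. Concretely, I would first observe that $\alpha\bar\alpha=|\alpha|^2=\sqrt{r}$ (taking $\alpha$ in the upper half-plane, say), so $\alpha^2\cdot\overline{\alpha^2}=(m+ni)(m-ni)=r^2$ and $(\alpha\bar\alpha)^2=r$. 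Hence $\sqrt{r}\in\mathbb{Q}(i,\alpha)$, and since $\sqrt{m+ni}\notin\mathbb{Q}(i)$ forces $[\mathbb{Q}(i,\alpha):\mathbb{Q}(i)]=2$, the element $\sqrt{r}$ is either in $\mathbb{Q}(i)$ (iff $r$ is a perfect square, handled separately) or generates the same quadratic extension.

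The key computation is this: consider $\beta:=\alpha+\bar\alpha=2\,\mathrm{Re}(\alpha)$, a real number. Then $\beta^2=\alpha^2+\bar\alpha^2+2\alpha\bar\alpha=2m+2\sqrt{r}$, so $\beta^2\in\mathbb{Q}(\sqrt{r})\subset\mathbb{Q}(i,\alpha)$. If $r$ is a perfect square, $\beta^2=2m+2\sqrt{r}\in\mathbb{Q}$, and I would set $l$ to be the non-square positive integer with $\mathbb{Q}(\sqrt{2m+2\sqrt r})=\mathbb{Q}(\sqrt l)$, noting $2m+2\sqrt r>0$ is possible only if... — here one must check the sign, and if $2m+2\sqrt r\le 0$ one uses instead $\gamma:=\alpha-\bar\alpha=2i\,\mathrm{Im}(\alpha)$ with $\gamma^2=2m-2\sqrt r<0$, giving $(\gamma/i)^2=-2m+2\sqrt r>0$. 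So in the perfect-square case $\mathbb{Q}(i,\alpha)=\mathbb{Q}(i,\sqrt{l})$ with $l$ the squarefree part of $|2m\pm2\sqrt r|$, which is not a perfect square because $\alpha\notin\mathbb{Q}(i)$. In the non-perfect-square case, $\mathbb{Q}(i,\alpha)\supseteq\mathbb{Q}(i,\sqrt r)$ and both are quadratic over $\mathbb{Q}(i)$, hence equal; then take $l$ to be the squarefree part of $r$, which is not a perfect square since $r$ itself is not.

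The step I expect to be the main obstacle is the bookkeeping of signs and the verification that the resulting $l$ is genuinely \emph{not} a perfect square (equivalently $\sqrt l\notin\mathbb{Z}$), which is exactly the content of the hypothesis $\sqrt{m+ni}\notin\mathbb{Q}(i)$ once translated correctly: one must rule out the degenerate possibility that every natural candidate ($2m+2\sqrt r$, $2m-2\sqrt r$, $r$) is a perfect square or non-positive in the wrong way simultaneously. I would handle this by a short case split on whether $r$ is a perfect square, and within each case by choosing between $\beta$ and $\gamma/i$ according to the sign, using in each case that $\mathbb{Q}(i,\alpha)$ is a degree-$2$ extension of $\mathbb{Q}(i)$ containing the real (or purely-imaginary-over-$i$) quadratic surd in question, so the two quadratic fields over $\mathbb{Q}(i)$ coincide. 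The argument of Lemma \ref{field1} (showing $\beta=\bar\alpha\notin\mathbb{Q}(\alpha)$-type statements fail) is the model for the non-degeneracy bookkeeping, so I would cite it as a template rather than redo it.
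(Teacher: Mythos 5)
The central computation in your proposal is off by a square, and this breaks one of your two cases. With $r=\sqrt{m^2+n^2}\in\mathbb{Z}_{>0}$ and $\alpha=\sqrt{m+ni}$, one has $\alpha\bar\alpha=|\alpha|^2=|\alpha^2|=|m+ni|=r$, \emph{not} $\sqrt r$ (your own line $(\alpha\bar\alpha)^2=\alpha^2\cdot\overline{\alpha^2}=r^2$ already contradicts the claim $(\alpha\bar\alpha)^2=r$). Consequently the assertion that $\sqrt r\in\mathbb{Q}(i,\alpha)$ is false in general, and the entire ``non-perfect-square case,'' where you conclude $\mathbb{Q}(i,\alpha)=\mathbb{Q}(i,\sqrt r)$ and take $l$ to be the squarefree part of $r$, gives the wrong field. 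Concretely, for $m+ni=6+8i=2(2+i)^2$ we have $r=10$ (not a perfect square) and $\mathbb{Q}(i,\sqrt{6+8i})=\mathbb{Q}(i,\sqrt2)$, the eighth cyclotomic field, whose only real quadratic subfield is $\mathbb{Q}(\sqrt2)$; hence $\sqrt{10}\notin\mathbb{Q}(i,\sqrt{6+8i})$ and $l=10$ is incorrect (the correct value is $l=2$, the squarefree part of $2m+2r=32$).

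The good news is that your $\beta=\alpha+\bar\alpha$ device is the right idea once corrected, and it then needs no case split on whether $r$ is a perfect square: since $\alpha\bar\alpha=r\in\mathbb{Z}$ (this integrality is exactly what distinguishes type B from type A, where Lemma \ref{field1} shows $\bar\alpha\notin\mathbb{Q}(i,\alpha)$), we get $\bar\alpha=r/\alpha\in\mathbb{Q}(i,\alpha)$ and $\beta^2=2m+2r\in\mathbb{Z}_{\geq 0}$ unconditionally. If $2m+2r>0$ it cannot be a perfect square: since $(2r+2m)(2r-2m)=4n^2$, a rational $\beta$ would force both $\mathrm{Re}(\alpha)$ and $\mathrm{Im}(\alpha)$ into $\mathbb{Q}$, contradicting $\alpha\notin\mathbb{Q}(i)$; so $l$ equal to the squarefree part of $2m+2r$ works, and $\mathbb{Q}(i,\sqrt l)=\mathbb{Q}(i,\alpha)$ because both are quadratic over $\mathbb{Q}(i)$ and one contains the other. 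If $2m+2r=0$ (i.e.\ $n=0$, $m<0$), use $\bigl((\alpha-\bar\alpha)/i\bigr)^2=2r-2m>0$ instead. This corrected argument is genuinely different from the paper's, which factors $m+ni=l(u+vi)^2$ with $l=\gcd(m,n)$ via the parametrization of the Pythagorean triple $m_1^2+n_1^2=k_1^2$, treating $m=0$ and $n=0$ separately; your route would avoid the triple parametrization entirely, but as written the proposal does not yet constitute a proof.
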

\begin{proof}
We set $k=\sqrt{m^2+n^2}$.
First, we assume that $mn\ne 0$.
Let $l$ be the greatest common divisor of $m$ and $n$. We define $m_1, n_1, k_1 \in \mathbb{Z}_{>0}$ as follows:
\begin{align*}
m = m_1 l, \quad n = n_1 l, \quad k = l k_1.
\end{align*}
Then, we have $m_1^2+n_1^2=k_1^2$.
Since either $m_1$ or $n_1$ is even, we assume $n_1$ to be even.
There exist integers $u$ and $v$ such that 
 $m_1=u^2-v^2, n_1=2uv$.
 Then, we have $m_1+n_1i=(u+vi)^2$.
 We have $\sqrt{m+ni}=\epsilon \sqrt{l}(u+vi)$,
 where $\epsilon\in \{-1,1\}$.
 Therefore, we see that  $\sqrt{l}\notin \mathbb{Z}$ and $\mathbb{Q}(i,\sqrt{m+ni})=\mathbb{Q}(i,\sqrt{l})$.
 Next, we assume that $n=0$.
 Then, we have $\mathbb{Q}(i,\sqrt{m+ni})=\mathbb{Q}(i,\sqrt{|m|})$.
 Subsequently, we assume that $m=0$.
 Then, if $n>0$, then we have $\sqrt{ni}=\sqrt{n}\sqrt{i}=\frac{\sqrt{2n}}{2}+\frac{\sqrt{2n}}{2}i$.
 if $n<0$, then we have $\sqrt{ni}=\sqrt{|n|}\sqrt{-i}=\frac{\sqrt{2|n|}}{2}-\frac{\sqrt{2|n|}}{2}i$.
Therefore, we have $\mathbb{Q}(i,\sqrt{ni})=\mathbb{Q}(i,\sqrt{2|n|})$.
It is clear that $\sqrt{2|n|}\notin \mathbb{Z}$.
\end{proof}

\begin{defn}\label{fieldtype}
Let $K$ be a quadratic field over $\mathbb{Q}(i)$.
Then, there exist integers $m,\ n$ such that $K=\mathbb{Q}(i,\sqrt{m+ni})$ and $\sqrt{m+ni}\notin \mathbb{Q}(i)$.
If $\sqrt{m^2 + n^2} \notin \mathbb{Z}_{>0}$, then we say $K$ is of type A. 
If $\sqrt{m^2 + n^2} \in \mathbb{Z}_{>0}$, then we say $K$ is of type B.
\end{defn}

\begin{rem}\label{remark3}
Definition \ref{fieldtype} is well defined. 
If $K$ is of type B, from Lemma \ref{field3}, we see that there exists
 $k\in \mathbb{Z}_{>0}$ such that $K=\mathbb{Q}(i,\sqrt{k})$, and it is clear that $K$ is not of type A.
\end{rem}

\section{Theorems}
To state the Main Theorem, we first introduce several definitions.

We define sets $N_1$ and $N_2$ by
\begin{align*}
&N_1:=\bigcup_{1\leq i\leq 8} (X_i\times W_i)\cup\bigcup_{1\leq i\leq 4} (Y_i\times Y_i')\cup \bigcup_{1\leq i\leq 4} (K_i\times K_i'),\\
&N_2:=\bigcup_{1\leq i\leq 8} (X_i\times W_i)\cup\bigcup_{3\leq i\leq 4} (Y_i\times Y_i')\cup \bigcup_{1\leq i\leq 2} (K_i\times K_i').
\end{align*}

We give the proof of  Corollary \ref{eperiod2}.
\begin{proof}
If $\alpha_{(n)}^H = \alpha_{(n)}^T$ for all $n \geq 1$, then since $\{\alpha_{(n)}^H\}$ is eventually periodic, 
$\{\alpha_{(n)}^T\}$ must also be eventually  periodic.
We assume that there exists $n > 0$ such that $\alpha_{(n)}^H \ne \alpha_{(n)}^T$ and let $m$ be the smallest such $n$.
Thus, we have $\alpha_{(m)}^H \ne \alpha_{(m)}^T$ and $\alpha_{(m-1)}^H = \alpha_{(m-1)}^T$.
Then, from Lemma \ref{Letuin} and \ref{backslash}, we have
$\alpha_{(m-1+4k)}^H = \alpha_{(m-1+4k)}^T$ for $k=1,2,\ldots$.
Since $\alpha_{(m-1+4k)}^H$ for $k=1, 2, \ldots$ is finite, $\{\alpha_{(n)}^T\}$ is eventually periodic.
\end{proof}

\begin{thm}\label{galois}
For $\alpha \in \overline{X}$, the sequence $\alpha^H_1(=\alpha), \alpha^H_2, \ldots, \alpha^H_n, \ldots$ becomes purely periodic if and only if $\alpha$ is a quadratic irrational over $\mathbb{Q}(i)$ and $(\alpha, \alpha') \in \widetilde{X}$.
\end{thm}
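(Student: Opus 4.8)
The plan is to prove the two implications separately, using the natural extension $\widetilde{T_H}$ as the bookkeeping device that simultaneously tracks the forward orbit $\alpha^H_n$ and its reversed dual.

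\medskip

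\textbf{Necessity.} Suppose $\{\alpha^H_n\}$ is purely periodic with period $m$. Then $\alpha = \alpha^H_1$ is quadratic over $\mathbb{Q}(i)$ because it satisfies the fixed-point relation coming from one full period: writing out the continued fraction $[0;\overline{a^H_1,\ldots,a^H_m}]$ gives a Möbius equation $\alpha = \frac{p\alpha+p'}{q\alpha+q'}$ with coefficients in $\mathbb{Z}[i]$, hence $\alpha$ is a root of a quadratic over $\mathbb{Q}(i)$; it is irrational since the expansion is infinite (Theorem~\ref{rationalH}). For the claim $(\alpha,\alpha')\in\widetilde{X}$, I would invoke Theorem~\ref{purlyperiod}: the reversed conjugate sequence $(\alpha^H_m)',(\alpha^H_{m-1})',\ldots$ is exactly the output of the dual algorithm (Definition~\ref{algolD}), so in particular $(\alpha^H_m)'\in D$, i.e.\ $|(\alpha^H_m)'|>1$ (Remark~\ref{jHremark}). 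Now one unwinds: since $\alpha = \frac{1}{a^H_1 + \alpha^H_2}$ and conjugation is a field automorphism, $\alpha' = \frac{1}{a^H_1 + (\alpha^H_2)'}$, and iterating, $\alpha'$ is obtained from $(\alpha^H_m)'$ by applying the maps $w\mapsto \frac{1}{w}-a^H_j$ in the order $j=m,m-1,\ldots,1$ — that is, $(\alpha,\alpha')$ is a backward image under $\widetilde{T_H}$ of the point $(\alpha^H_2,(\alpha^H_1)')$ shifted around, and more precisely $\widetilde{T_H}^{\,m}(\alpha,\alpha')=(\alpha,\alpha')$. So it suffices to check $(\alpha,\alpha')\in\widetilde{X}$, i.e.\ that the second coordinate lies in the correct $W_i\cup\{\infty\}$ matching the first coordinate's $\overline{X_i}$. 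This is where Lemma~\ref{For1} does the work: knowing $a^H_1 = a_H(\alpha)\in L(i^{k-1}(1+i))$ or $L(i^{k-1}2)$ pins down which $X_i$ contains $\alpha$, and Lemma~\ref{For1} says $w\mapsto \frac1w - a^H_1$ sends $W_k\cup\{\infty\}$ (resp.\ $W_{k+4}\cup\{\infty\}$) onto $S_{-a^H_1}$; since the dual orbit keeps $(\alpha^H_2)'\in D$ and in fact in the appropriate $S$-region, one reads off $\alpha'\in W_i\cup\{\infty\}$ for the matching index $i$.

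\medskip

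\textbf{Sufficiency.} Suppose $\alpha$ is a quadratic irrational over $\mathbb{Q}(i)$ with $(\alpha,\alpha')\in\widetilde{X}$. By Corollary~\ref{eperiod2} (or Theorem~\ref{eperiod}) the sequence $\{\alpha^H_n\}$ is eventually periodic; I must upgrade ``eventually'' to ``purely.'' The key point is that $\widetilde{X}$ is (essentially) invariant: if $(z,w)\in\widetilde{X}$ then $\widetilde{T_H}(z,w)\in\widetilde{X}$, so the pairs $(\alpha^H_n,(\alpha^H_n)'')$ — where the second coordinate is the appropriately conjugated and Möbius-transformed tail — all stay in $\widetilde{X}$; combined with injectivity of $\widetilde{T_H}$ on the irrational locus (Lemma~\ref{injective}), the orbit of $(\alpha,\alpha')$ under $\widetilde{T_H}$ is an injective eventually-periodic sequence in $\widetilde{X}$, which forces it to be purely periodic, and projecting to the first coordinate gives pure periodicity of $\{\alpha^H_n\}$. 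Concretely: eventual periodicity gives $\widetilde{T_H}^{\,p+q}(\alpha,\alpha')=\widetilde{T_H}^{\,q}(\alpha,\alpha')$ for some $q\geq 0$, $p\geq 1$; if $q\geq 1$, injectivity of $\widetilde{T_H}$ (valid because all iterates are irrational over $\mathbb{Q}(i)$, $\alpha$ being a quadratic irrational) lets us cancel one step at a time down to $q=0$. One must check the iterates avoid the $\mathbb{Q}(i)\times\mathbb{Q}(i)$ exceptional set — true since $\alpha$ quadratic irrational implies every $\alpha^H_n$ is a quadratic irrational, and the second coordinate is an element of $\mathbb{Q}(i)(\alpha)\setminus\mathbb{Q}(i)$ as well, hence never in $\mathbb{Q}(i)$.

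\medskip

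\textbf{Main obstacle.} The delicate part is the boundary bookkeeping in the necessity direction: showing $(\alpha,\alpha')$ lands in the modified domain $\widetilde{X}$ with its asymmetric treatment of $X_1,\ldots,X_4$ (closed-in-$W$) versus $X_5,\ldots,X_8$ (open-in-$X$), rather than merely in the closure $\hat{X}$. This requires tracking exactly which half-open boundary arcs of the $Q_w$ (for the first coordinate) correspond under inversion to which half-open boundary arcs of the $S_w$ (for the second), and verifying that the ``first type / second type'' matching of Lemma~\ref{For1} is compatible with the index pairing $\overline{X_i}\leftrightarrow W_i$ precisely on the boundary. The earlier Lemmas~\ref{tiling} and~\ref{For1}, plus Theorem~\ref{purlyperiod}'s strict inequality $|(\alpha^H_m)'|>1$, are exactly the tools designed to handle this; the argument is a careful but routine case analysis over the eight regions once the dictionary between $Q_w$-boundaries and $S_w$-boundaries is set up.
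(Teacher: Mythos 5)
Your proposal is correct and follows essentially the same route as the paper: for necessity, Theorem~\ref{purlyperiod} places $(\alpha_{(2)}^H)'$ in $S_{-a_1^H}$ and the region correspondence of Lemma~\ref{For1} (which the paper recomputes directly, case by case over the $X_i$) pulls this back to $\alpha'\in W_i$; for sufficiency, eventual periodicity from Theorem~\ref{eperiod} is upgraded to pure periodicity by cancelling the pre-period one step at a time via the injectivity of $\widetilde{T_H}$ on the irrational locus (Lemma~\ref{injective}). The boundary case analysis you flag as the main obstacle is indeed where the paper's proof spends its effort, handled there by checking representative regions and asserting the rest ``similarly.''
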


\begin{proof}
Let $\alpha (=\alpha_{(1)}^H) \in \overline{X}$,
 and assume that $\{\alpha_{(n)}^H\}$ is purely periodic.
 Let $m$ be the length of the period.
 We set $k=2m$. 
By a well-known argument, $\alpha$ is at most a quadratic number over $\mathbb{Q}(i)$. By Theorem \ref{rationalH}, $\alpha$ is a quadratic irrational over $\mathbb{Q}(i)$.
By Theorem \ref{purlyperiod}, $(\alpha_{(k)}^H)', (\alpha_{(k-1)}^H)', \ldots, (\alpha_{(1)}^H)', (\alpha_{(k)}^H)', (\alpha_{(k-1)}^H)', \ldots, (\alpha_{(1)}^H)', \ldots$ is generated by the algorithm (Definition \ref{algolD}).
Since $\frac{1}{\alpha_{(1)}^H}-a_1^H=\alpha_{(2)}^H$ holds,
 we have $\frac{1}{(\alpha_{(1)}^H)'}-a_1^H=(\alpha_{(2)}^H)'$, which implies that
\begin{align*}
(\alpha_{(1)}^H)'=\dfrac{1}{(\alpha_{(2)}^H)'+a_1^H}.
\end{align*}
Therefore, we have  $(\alpha_{(2)}^H)'\in S_{-a_1^H}$.
We assume that $\alpha_{(1)}^H\in \overline{X_1}$.
Then, we have $-a_1^H\in L(-(1+i))$.
Therefore, we have
\begin{align*}
(\alpha_{(2)}^H)'+a_1^H\in \{z\in C||z|<1,|z-(1+i)|\geq 1 \}=(W_1\cup \{\infty\})^{-1},
\end{align*}
which implies that $(\alpha_{(1)}^H)'\in W_1$.
Thus, we have $(\alpha, \alpha')\in \widetilde{X}$.
Next, we assume  that $\alpha_{(1)}^H\in X_5$. 
Then, we have $-a_1^H\in L(-2)$.
Therefore, we have
\begin{align*}
(\alpha_{(2)}^H)'+a_1^H\in \{z\in C||z|<1,|z-(1+i)|\geq 1,|z-(1-i)|\geq 1 \}=(W_5\cup \{\infty\})^{-1},
\end{align*}
which implies that $(\alpha_{(1)}^H)'\in W_5$.
Thus, we have $(\alpha, \alpha')\in \widetilde{X}$.
The other cases can be proved similarly.

Let $\alpha (=\alpha_{(1)}) \in \overline{X}$.
We assume that $\alpha$ is a quadratic irrational over $\mathbb{Q}(i)$ and 
 $(\alpha, \alpha')\in \widetilde{X}$.
From Theorem \ref{eperiod}, $\{\alpha_{(n)}^H\}$ is eventually periodic.
That is, there exists some integer $M$ such that for $n \geq M$, $\{\alpha_{(n)}^H\}$ becomes periodic.
We assume $\{\alpha_{(n)}^H\}_{n\geq M}$ is purely periodic with the length $l$.
If $M=1$ holds, then $\{\alpha_{(n)}^H\}$ is purely periodic.
We assume that $M>1$.
Then, We have $\widetilde{T_H}(\alpha_{(M-1)}^H, (\alpha_{(M-1)}^H)')=(\alpha_{(M)}^H, (\alpha_{(M)}^H)')$ and
$\widetilde{T_H}(\alpha_{(M-1+l)}^H, (\alpha_{(M-1+l)}^H)')=(\alpha_{(M)}^H, (\alpha_{(M)}^H)')$.
From Lemma \ref{injective}, we have $((\alpha_{(M-1)}^H, (\alpha_{(M-1)}^H)')=\alpha_{(M-1+l)}^H, (\alpha_{(M-1+l)}^H)')$.
By repeating this argument, we have $((\alpha_{(1)}^H, (\alpha_{(1)}^H)')=(\alpha_{(1+l)}^H, (\alpha_{(1+l)}^H)')$.
\end{proof}

Theorem \ref{galois} can be slightly improved as follows.

\begin{thm}\label{galois2}
For $\alpha \in \overline{X} $, the sequence $\alpha^H_1(=\alpha), \alpha^H_2, \ldots, \alpha^H_n, \ldots$ becomes purely periodic if and only if $\alpha$ is a quadratic irrational over $\mathbb{Q}(i)$ and $(\alpha, \alpha') \in N_1$.
\end{thm}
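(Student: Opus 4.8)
The strategy is to upgrade \thmref{galois} by showing that the only difference between $\widetilde{X}$ and $N_1$ lies in boundary pieces where purely periodic behaviour is preserved in both directions, so the two characterizations coincide. Recall
\[
\widetilde{X}=\bigcup_{1\le i\le 4}\bigl(\overline{X_i}\times(W_i\cup\{\infty\})\bigr)\ \cup\ \bigcup_{5\le i\le 8}\bigl(X_i\times(W_i\cup\{\infty\})\bigr),
\]
while $N_1$ replaces the ``closure'' parts by $X_i$ together with the explicit boundary curves $Y_i\times Y_i'$ and $K_i\times K_i'$. So the first step is purely set-theoretic: identify $\widetilde{X}\setminus N_1$ and $N_1\setminus\widetilde{X}$. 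Using the description of $\partial X_i$ in terms of the arcs $K_j,K_j'$ and the segments $Y_j,Y_j'$ (Figure~\ref{fig:jh-6}), together with the fact that the relevant quadratic irrationals never lie in $\mathbb{Q}(i)$, one sees that the symmetric difference is contained in the union of the ``mismatched'' products such as $K_i\times K_i'$ for $i=3,4$, $Y_i\times Y_i'$ for $i=1,2$, and the parts of $\overline{X_i}\times W_i$ that sit over boundary points $z\in Y_j\cup K_j$.

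The second step handles a point $\alpha$ with $(\alpha,\alpha')$ in this symmetric difference. If $(\alpha,\alpha')\in N_1\setminus\widetilde{X}$, I want to show $\{\alpha_{(n)}^H\}$ is still purely periodic; if $(\alpha,\alpha')\in\widetilde{X}\setminus N_1$, I want to show it is \emph{not}, so that $N_1$ is exactly the correct set. The key tool is \lemref{Letuin} together with \lemref{T_H(Y_1)}: these say that $T_H$ cyclically permutes the arcs $K_1\leftrightarrow K_2$ (via $Y$-segments) and likewise for the $Y_j$, in a period-$4$ pattern, and that the coordinates stay on these curves. Concretely, if $\alpha\in K_j$ then $\frac1\alpha\in Y_{j+1}'\cup\{\infty\}$, so $a_H(\alpha)$ is forced, and one computes that $(\alpha,\alpha')\in K_j\times K_j'$ maps under $\widetilde{T_H}$ to $(T_H(\alpha),(T_H(\alpha))')$ lying again in some $Y_\ell\times Y_\ell'$ — i.e.\ the boundary pieces of $N_1$ form an $\widetilde{T_H}$-invariant set, and on it $\widetilde{T_H}$ is a bijection matching the dual-algorithm constraint. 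Running the ``if'' half of \thmref{galois}'s proof on these boundary points, using \lemref{For1} to check that $(\alpha_{(2)}^H)'\in S_{-a_1^H}$ still forces $(\alpha_{(1)}^H)'$ into the right $W$-region or its boundary curve, gives pure periodicity there; conversely, a point in $\widetilde{X}\setminus N_1$ over a boundary $z$ fails the dual membership and hence, by \thmref{purlyperiod} (read as in \rem{jHremark}, demanding $|(\alpha_{(m)}^H)'|>1$ strictly), cannot be purely periodic.

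The third step assembles these: for $(\alpha,\alpha')$ in the interior product $\bigcup_{1\le i\le 8}(X_i\times W_i)$, which is common to both sets, \thmref{galois} already gives the equivalence; for the boundary discrepancies, step two gives it; hence ``purely periodic $\iff$ quadratic irrational and $(\alpha,\alpha')\in N_1$''. I expect the main obstacle to be the bookkeeping in step two: one must be careful about which boundary arc maps to which (the indices shift by $\pm1$ mod $4$ under $T_H$, cf.\ \lemref{T_H(Y_1)}), about the strictness of the inequalities defining $W_i$ versus its closure (the distinction between $|z|>1$ and $|z|\ge1$ is exactly what separates $N_1$ from $\widetilde X$ on the circle $|z|=1$), and about the fact that on $K_i\times K_i'$ the $\widetilde{T_H}$-orbit genuinely returns after four steps rather than escaping — this is where \lemref{Letuin}(1) and the explicit $Y_j,K_j$ computations are indispensable. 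Once the invariance of the boundary stratum and the exact matching with the second-type (dual) algorithm on it are nailed down, the rest is a direct rerun of \thmref{galois}.
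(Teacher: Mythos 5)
Your reduction to the discrepancy between $\widetilde{X}$ and $N_1$ on the boundary curves is the right first move, and your ``if'' direction is actually easier than you make it: since $N_1\subseteq\widetilde{X}$ (up to points of $\mathbb{Q}(i)$), pure periodicity for $(\alpha,\alpha')\in N_1$ follows at once from \thmref{galois}, with no need to rerun its proof on boundary strata. The genuine gap is in the ``only if'' direction. Starting from \thmref{galois} you know $(\alpha,\alpha')\in\widetilde{X}$, and the whole content of \thmref{galois2} is that when $\alpha$ lands on one of the one-dimensional curves $Y_j$ or $K_j$, the conjugate $\alpha'$ is forced onto the corresponding \emph{one-dimensional} curve $Y_j'$ or $K_j'$, not merely into the two-dimensional region $W_j$. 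Every tool you invoke for this — \lemref{For1}, the $S_{-a_1^H}$ membership coming from \thmref{purlyperiod} and Remark~\ref{jHremark}, the $\widetilde{T_H}$-invariance of the boundary stratum via \lemref{T_H(Y_1)} and \lemref{Letuin} — only ever produces constraints of the form ``$\alpha'$ lies in such-and-such open region'' ($S_w$, $W_j$, or their boundaries). A two-dimensional constraint cannot pin $\alpha'$ to a ray or a circular arc, so your argument cannot separate $Y_j\times Y_j'$ from $Y_j\times(W_j\setminus Y_j')$, which is exactly the separation the theorem asserts.

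The missing ingredient is algebraic, not dynamical: the curves $Y_j$ are given by a rational linear relation $k_1\,Re(z)+k_2\,Im(z)+k_3=0$, and $K_j$ maps under $z\mapsto 1/z$ onto such a line ($K_j^{-1}=Y_{j'}'$). The paper's proof uses \lemref{field1} to rule out type~A fields (for those, $1$, $Re(\alpha)$, $Im(\alpha)$ are linearly independent over $\mathbb{Q}$, so a quadratic irrational of type A can never lie on $Y_j$ or $K_j$ at all), and then \lemref{field2} to show that for type~B fields the conjugate $\alpha'$ satisfies the \emph{same} rational linear relation, hence lies on the full line through $Y_j\cup Y_j'$ (resp.\ the circle through $K_j\cup K_j'$). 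Intersecting that line or circle with the region $W_j$ already guaranteed by \thmref{galois} yields exactly $Y_j'$ (resp.\ $K_j'$), i.e.\ $(\alpha,\alpha')\in N_1$. Without this type~A/type~B dichotomy and the conjugation-preserves-the-relation lemma, the upgrade from $\widetilde{X}$ to $N_1$ does not go through; with it, your invariance-of-boundary-strata machinery becomes unnecessary.
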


\begin{proof}
Let $\alpha (=\alpha_{(1)}^H) \in \overline{X}$,
 and assume that $\{\alpha_{(n)}^H\}$ is purely periodic.
Then, from Theorem \ref{galois}, we see that
$\alpha$ is a quadratic irrational over $\mathbb{Q}(i)$ and 
$(\alpha, \alpha')\in \widetilde{X}$.
Let $K=\mathbb{Q}(i,\alpha)$.
If $\alpha\notin \bigcup_{1\leq j\leq 4} Y_j\cup \bigcup_{1\leq j\leq 4} K_j$ holds, 
then we have $(\alpha, \alpha')\in N_1$.
We assume that $\alpha\in \bigcup_{1\leq j\leq 4} Y_j\cup \bigcup_{1\leq j\leq 4} K_j$.
First, we assume that $\alpha \in Y_1$.
If $K$ is of type A, from Lemma \ref{field1}, we have $\alpha \notin Y_1$.
Therefore, $K$ is of type B.
Considering  Lemma \ref{field2} and $\alpha'\in W_1$, we have 
 $\alpha'\in Y_1'$.
 Thus, $(\alpha, \alpha')\in N_1$.
 Next, we assume that $\alpha \in K_1$.
From the fact that $K_1^{-1} = Y_2'$ and by Lemma \ref{field1}, $K$ is of type B. 
Considering  Lemma \ref{field2} and $\alpha'\in W_2\cup W_4$, we have 
 $\alpha'\in K_1'$.
 Thus, $(\alpha, \alpha')\in N_1$.
In the case where $\alpha \in \bigcup_{2 \leq j \leq 4} Y_j \cup \bigcup_{2 \leq j \leq 4} K_j$, we can similarly show 
that $(\alpha, \alpha') \in N_1$.\\
We assume that $\alpha$ is a quadratic irrational over $\mathbb{Q}(i)$ and 
 $(\alpha, \alpha')\in N_1$.
Then, it follows that $(\alpha, \alpha') \in \widetilde{X}$.
  From Theorem \ref{galois2}, we obtain the desired conclusion.
\end{proof}

Regarding Tanaka's algorithm, we have the following.

\begin{thm}\label{galois3}
For $\alpha \in X\backslash \mathbb{Q}(i)$, the sequence $\alpha^T_1(=\alpha), \alpha^T_2, \ldots, \alpha^T_n, \ldots$ becomes purely periodic if and only if $\alpha$ is a quadratic irrational over $\mathbb{Q}(i)$ and $(\alpha, \alpha') \in N_2$.
\end{thm}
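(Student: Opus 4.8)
The plan is to run the same two-directional argument used for Theorems \ref{galois} and \ref{galois2}, with every piece of J.~Hurwitz's machinery replaced by its Tanaka counterpart and with $N_1$ replaced by $N_2$. Concretely, I would first assemble: (i) a dual Tanaka algorithm, a map on $D\cup\{\infty\}$ defined like $T_D$ in Definition \ref{algolD} but with a floor attached to a family of regions $S_w^{T}$ matching the $W_i$, $Y_i'$, $K_i'$ that actually occur for $T_T$; (ii) the Tanaka analogue of Theorem \ref{purlyperiod}, namely that when $\{\alpha_{(n)}^T\}$ is purely periodic of length $m$ the reversed conjugate sequence $(\alpha_{(m)}^T)',\ldots,(\alpha_{(1)}^T)',(\alpha_{(m)}^T)',\ldots$ is produced by this dual algorithm; (iii) a natural extension $\widetilde{T_T}(z,w)=(T_T(z),1/w-a_T(z))$ on a domain $\widetilde{X}_T$ obtained from $\widetilde{X}$ by keeping the closure only on the side of the arcs $K_1,K_2$ and the segments $Y_3,Y_4$ and opening it on the $K_3,K_4,Y_1,Y_2$ side (so that $N_2\subseteq\widetilde{X}_T$); (iv) the Tanaka analogue of Lemma \ref{For1} (the inversion $z\mapsto 1/z-\alpha$ carries the relevant $W_k\cup\{\infty\}$ onto $S_{-\alpha}^{T}$) and, from it together with Lemma \ref{tiling}, the analogue of Lemma \ref{injective} (injectivity of $\widetilde{T_T}$ off $\mathbb{Q}(i)\times\mathbb{Q}(i)$). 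Each of these is proved exactly as its $T_H$-version; the only genuinely new input is the description of the boundary behaviour of $T_T$ already recorded in Lemma \ref{T_T(Y_1)}, which is precisely why $Y_3,Y_4,K_1,K_2$ (and not $Y_1,Y_2,K_3,K_4$) are the arcs and segments that appear in $N_2$.

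For the forward direction, assume $\{\alpha_{(n)}^T\}$ is purely periodic. By the standard argument a purely periodic continued fraction makes $\alpha$ satisfy a fixed quadratic equation over $\mathbb{Q}(i)$, and since $\alpha\notin\mathbb{Q}(i)$ by hypothesis, $\alpha$ is a quadratic irrational over $\mathbb{Q}(i)$. To locate $(\alpha,\alpha')$ I would use the Tanaka analogue of Theorem \ref{purlyperiod}: it gives $(\alpha_{(2)}^T)'\in S^{T}_{-a_1^T}$, whence, by the inversion identity of the analogue of Lemma \ref{For1}, $\alpha'$ lies in the $W_i$ matching the cell $X_i$ that contains $\alpha$, or in the $Y_i'$, $K_i'$ matching $Y_i$ or $K_i$ when $\alpha$ lies on such a boundary piece. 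That no $\alpha_{(n)}^T$ lies on $Y_1$ or $Y_2$ is immediate, since $Y_1,Y_2$ are exactly the two half-open edges of $X$ excluded by the convention $u,v<1/2$, hence $Y_1\cup Y_2$ is disjoint from the domain $X$ of $T_T$. That no $\alpha_{(n)}^T$ lies on $K_3$ or $K_4$ follows from Lemma \ref{T_T(Y_1)}: a point of $K_3$ is sent by $T_T$ into the $2$-cycle $\{Y_4,K_1\}$ and a point of $K_4$ into $\{Y_3,K_2\}$, and since $K_3\cap(Y_4\cup K_1)=\{-1,0\}$ and $K_4\cap(Y_3\cup K_2)$ are contained in $\mathbb{Q}(i)$ while an irrational orbit avoids $\mathbb{Q}(i)$, such a point could never recur, contradicting pure periodicity. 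Finally, when $\alpha$ lies on $K_1,K_2,Y_3$ or $Y_4$, the field-type dichotomy of Lemmas \ref{field1}, \ref{field2}, \ref{field3} forces $\mathbb{Q}(i,\alpha)$ to be of type B (type A is impossible because such a segment, or the image of such an arc under $z\mapsto1/z$, lies on a line and so imposes a $\mathbb{Q}$-linear relation among $1,\mathrm{Re}(\alpha),\mathrm{Im}(\alpha)$, which Lemma \ref{field1} forbids) and then forces $\alpha'$ onto the matching $Y_i'$ or $K_i'$, exactly as in the proof of Theorem \ref{galois2}. Hence $(\alpha,\alpha')\in N_2$.

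For the converse, assume $\alpha$ is a quadratic irrational over $\mathbb{Q}(i)$ with $(\alpha,\alpha')\in N_2$; since $N_2\subseteq\widetilde{X}$ and $\alpha\in X$, Corollary \ref{eperiod2} gives that $\{\alpha_{(n)}^T\}$ is eventually periodic, purely periodic from some index $M$ with period $l$. One first checks, again using Lemma \ref{T_T(Y_1)} for the first coordinate together with its dual statement for the second, that $N_2$ is forward-invariant under $\widetilde{T_T}$ off $\mathbb{Q}(i)\times\mathbb{Q}(i)$, so $(\alpha_{(n)}^T,(\alpha_{(n)}^T)')\in N_2\subseteq\widetilde{X}_T$ for every $n$. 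If $M=1$ we are done. If $M>1$, then $\widetilde{T_T}(\alpha_{(M-1)}^T,(\alpha_{(M-1)}^T)')=(\alpha_{(M)}^T,(\alpha_{(M)}^T)')=\widetilde{T_T}(\alpha_{(M-1+l)}^T,(\alpha_{(M-1+l)}^T)')$, and both preimages lie off $\mathbb{Q}(i)\times\mathbb{Q}(i)$, so injectivity of $\widetilde{T_T}$ forces $(\alpha_{(M-1)}^T,(\alpha_{(M-1)}^T)')=(\alpha_{(M-1+l)}^T,(\alpha_{(M-1+l)}^T)')$; iterating down the pre-period contradicts the minimality of $M$, so in fact $M=1$ and $\{\alpha_{(n)}^T\}$ is purely periodic.

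The step I expect to be the real obstacle is the boundary bookkeeping in (i), (iii) and (iv): choosing the family $S_w^{T}$ and the domain $\widetilde{X}_T$ so that $\widetilde{T_T}$ is simultaneously well defined, a self-map, injective off $\mathbb{Q}(i)\times\mathbb{Q}(i)$, and has $N_2$ forward-invariant. This is where the half-open convention on $X$ and the delicate behaviour of $T_T$ on the arcs $K_j$ and segments $Y_j$, foreshadowed by Lemmas \ref{Letuin}, \ref{backslash} and \ref{T_T(Y_1)}, must be reconciled. The other ingredients---the field-theoretic Lemmas \ref{field1}--\ref{field3}, the tiling Lemma \ref{tiling}, and the eventual-periodicity input Corollary \ref{eperiod2}---transfer from the $T_H$ setting essentially verbatim.
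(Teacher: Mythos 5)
Your overall two-directional skeleton is sound, and two of your observations do coincide with the paper's proof: the exclusion of $K_3\cup K_4$ via the fact that Lemma \ref{T_T(Y_1)} sends these arcs into the $2$-cycles $Y_4\leftrightarrow K_1$ and $Y_3\leftrightarrow K_2$ so they can never recur, and the remark that $Y_1\cup Y_2$ is automatically disjoint from $X$. But the load-bearing ingredient of your forward direction is item (ii), a ``Tanaka analogue of Theorem \ref{purlyperiod},'' together with the dual regions $S_w^{T}$, the modified natural extension $\widetilde{X}_T$, and its injectivity. You assert these are ``proved exactly as the $T_H$-version,'' but Theorem \ref{purlyperiod} is not proved in this paper at all --- it is quoted from J.~Hurwitz's 1902 memoir --- so there is no proof to transplant, and the transplant is not routine precisely because $T_T$ and $T_H$ differ exactly on the boundary sets $Y_j$, $K_j$ where the new content of the theorem lives (cf.\ the fourth expansion in Example \ref{example1}). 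Deferring this as ``the real obstacle'' leaves the main step of the argument unestablished; as written, the proposal has a genuine gap.

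The paper avoids building any Tanaka-side duality machinery. It reduces Theorem \ref{galois3} entirely to Theorem \ref{galois2} by comparing the two orbits: Lemmas \ref{Letuin} and \ref{backslash} show that once the $T_H$- and $T_T$-orbits diverge they re-synchronize every four steps, so $\alpha^H_{(1+4n)}=\alpha^T_{(1+4n)}$ for all $n$, and the starting point must then lie in $K_1\cup K_2\cup Y_3\cup Y_4$. Combining this with the parity facts from Lemmas \ref{T_H(Y_1)} and \ref{T_T(Y_1)} (the relevant period lengths are even, resp.\ multiples of $4$), pure periodicity of one orbit transfers to the other, and $(\alpha,\alpha')\in N_1$ is imported directly from Theorem \ref{galois2} before being refined to $N_2$. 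If you want to complete your argument along the paper's lines, replace items (i)--(iv) by this synchronization argument; it uses only lemmas already proved in the paper and sidesteps both the dual algorithm and the construction of $\widetilde{X}_T$.
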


\begin{proof}
Let $\alpha (=\alpha_{(1)}^T) \in X$,
 and assume that $\{\alpha_{(n)}^T\}$ is purely periodic.
It is not difficult to see that $\alpha$ is  a quadratic irrational number over $\mathbb{Q}(i)$.
We assume that for all $n\geq 1$  $\alpha_{(n)}^T=\alpha_{(n)}^H$.
Since $\{\alpha_{(n)}^T\}$ is purely periodic, from Theorem \ref{galois2}, 
we have $(\alpha, \alpha') \in N_1$.
We assume that $\alpha\in K_3 \cup K_4$.
Then, from Lemma \ref{T_T(Y_1)}, for all $n\geq 2$, $\alpha\ne \alpha_{(n)}^T$
, which contradicts that $\{\alpha_{(n)}^T\}$ is purely periodic.
Therefore, we have $\alpha\notin K_3 \cup K_4$ , which implies that
$(\alpha, \alpha') \in N_2$.
Next, we assume that there exists $k\in \mathbb{Z}_{>0}$ such that $\alpha_{(k)}^T=\alpha_{(k)}^H$
and $\alpha_{(k+1)}^T\ne\alpha_{(k+1)}^H$.
From Lemma \ref{backslash}, we have 
$\alpha_{(k+4)}^T\in K_1 \cup K_2$.
From Lemma \ref{Letuin} and considering that $\{\alpha_{(n)}^T\}$ is purely periodic, we see that
$\alpha_{(1)}^T\in K_1 \cup K_2$ or $\alpha_{(1)}^T\in Y_3 \cup Y_4$.
From Lemma \ref{Letuin}, we obtain
\begin{align}\label{(4n+1)}
\alpha_{(4n+1)}^T=\alpha_{(4n+1)}^H\ \ \ \text{for all $n\geq 0$.}
\end{align}
Let $g\in \mathbb{Z}_{>0}$ be the length of the period of $\{\alpha_{(n)}^T\}$.
From Lemma \ref{T_T(Y_1)}, we see that $g$ is even.
Therefore, from (\ref{(4n+1)}), we have $\alpha_{(2g+1)}^T=\alpha_{(2g+1)}^H$, which implies
 $\alpha_{(1)}^H=\alpha_{(2g+1)}^H$.
Thus, $\{\alpha_{(n)}^H\}$ is purely periodic.
From Theorem \ref{galois2}, 
we have $(\alpha, \alpha') \in N_1$. 
In the same way as in the previous discussion, we can show that $(\alpha, \alpha') \in N_2$. 

We assume that $\alpha$ is a quadratic irrational over $\mathbb{Q}(i)$ and 
 $(\alpha, \alpha')\in N_2$.
Since $(\alpha, \alpha')\in N_1$ holds, 
$\{\alpha_{(n)}^H\}$ is purely periodic.
If for all $n\geq 1$  $\alpha_{(n)}^T=\alpha_{(n)}^H$ holds, 
$\{\alpha_{(n)}^T\}$ is purely periodic.
Next, we assume that there exists $k\in \mathbb{Z}_{>0}$ such that $\alpha_{(k)}^T=\alpha_{(k)}^H$
and $\alpha_{(k+1)}^T\ne\alpha_{(k+1)}^H$.
From Lemmas \ref{T_H(Y_1)} and \ref{backslash} , we obtain, for all $n > k $,
$\alpha_{(k)}^H \in \bigcup_{1 \leq j \leq 4} Y_j \cup \bigcup_{1 \leq j \leq 4} K_j$.
Therefore, we have $\alpha_{(1)}^H \in \bigcup_{1 \leq j \leq 4} Y_j \cup \bigcup_{1 \leq j \leq 4} K_j$.
Since $(\alpha, \alpha')\in N_2$ holds, we have $\alpha_{(1)}^H \in \bigcup_{3 \leq j \leq 4} Y_j \cup \bigcup_{1 \leq j \leq 2} K_j$.
From Lemma \ref{Letuin}, we have for all $n\geq 0$,
\begin{align}\label{1+4n}
\alpha_{(1+4n)}^H=\alpha_{(1+4n)}^T.
\end{align}
Let $g\in \mathbb{Z}_{>0}$ be the length of the period of $\{\alpha_{(n)}^H\}$.
From Lemma \ref{T_T(Y_1)}, we see that $g$ is a multiple of $4$.
Then, we have  $\alpha_{(1)}^H=\alpha_{(1+g)}^H$.
From (\ref{1+4n}), we have $\alpha_{(1)}^T=\alpha_{(1+g)}^T$.
Thus, $\{\alpha_{(n)}^T\}$ is purely periodic.
\end{proof}

Let $n$ be a non-square integer. It is known that   
$\sqrt{n} - \lfloor \sqrt{n} \rfloor $ has a purely periodic continued fraction expansion.
 As an application of theorems, we will demonstrate that a similar result holds for J. Hurwitz's continued fractions.

\begin{thm}\label{galois4}
Let $m,n$ be integers with $\sqrt{m+ni}\notin \mathbb{Q}(i)$.
Let $\alpha=\sqrt{m+ni}-\lfloor \sqrt{m+ni}\rfloor_H$.
Then, $\{\alpha_{(n)}^H\}$ is purely periodic.
\end{thm}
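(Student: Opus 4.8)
The strategy is to verify the hypotheses of Theorem \ref{galois2}: it suffices to show that $\alpha = \sqrt{m+ni} - \lfloor \sqrt{m+ni}\rfloor_H$ is a quadratic irrational over $\mathbb{Q}(i)$ with $(\alpha,\alpha') \in N_1$. Quadratic irrationality is immediate from $\sqrt{m+ni}\notin\mathbb{Q}(i)$ together with Theorem \ref{rationalH} (which rules out $\alpha \in \mathbb{Q}(i)$, since a rational $\alpha$ would force $\sqrt{m+ni}\in\mathbb{Q}(i)$). So the whole content is the membership $(\alpha,\alpha') \in N_1$, and since $\alpha \in \overline{X}$ by construction, the real work is locating the conjugate $\alpha'$.

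Write $\beta = \sqrt{m+ni}$ with the branch chosen so $\beta$ lies in the appropriate half-plane, and let $w = \lfloor \beta \rfloor_H \in (1+i)\mathbb{Z}[i]$ (or $w = \beta$ only in the degenerate case $\beta\in\mathbb{Z}[i]$, which is excluded). The conjugate of $\alpha$ over $\mathbb{Q}(i)$ is $\alpha' = -\beta - w$, because the nontrivial automorphism of $\mathbb{Q}(i,\beta)/\mathbb{Q}(i)$ sends $\beta \mapsto -\beta$ and fixes $w \in \mathbb{Q}(i)$. Now $|\alpha'| = |\beta + w| = |2\beta - \alpha|$; since $\alpha \in \overline X$ has $|\alpha|\le \sqrt2/2 < 1$ while $|2\beta|$ is large (as $\sqrt{m+ni}\notin\mathbb{Q}(i)$ forces $m+ni \notin\{0\}$ and in fact $|m+ni|\ge 2$ in the non-square cases, so $|\beta|\ge\sqrt2$), one gets $|\alpha'| > 1$ comfortably. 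The first coordinate membership $\alpha \in X_i$ and the matching second-coordinate membership $\alpha' \in W_i$ are then checked by relating $\alpha'$ to the region $S_{-a}$ where $a = a_H(\alpha)$: since $\alpha = \beta - w$ and $1/\alpha$ begins the continued fraction, $\alpha' = -(2\beta) + \alpha$... more cleanly, one uses Lemma \ref{For1} in reverse, exactly as in the proof of Theorem \ref{galois}: from $(\alpha,\alpha')$ one must show $\alpha'$ lies in the correct $W_i$ attached to the quadrant/class of $w$. The key elementary estimate is that $\alpha'$ and $-\beta$ differ by the lattice element $-w$, and $-\beta$ lies in $\overline{X} - $ (its own $\lfloor\cdot\rfloor_H$) shifted, so $|\alpha' - (\text{the relevant centers } w - i^k(1\pm i))| \ge 1$ follows from $|-\beta - w| = |\alpha'|$ being large and the geometry of $S_w$ being "everything outside a bounded cluster of disks of radius $1$."

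The cleanest route avoids re-deriving the tiling and instead argues: the dual algorithm applied to $\alpha'$ should reproduce (periodically) the reversed conjugate sequence, which is exactly the criterion in Theorem \ref{purlyperiod}; but running that argument requires already knowing periodicity. So I will instead go the direct way through Theorem \ref{galois2}. After establishing $(\alpha,\alpha')\in\widetilde X$ as above, I upgrade to $(\alpha,\alpha')\in N_1$ using Lemmas \ref{field1} and \ref{field2}: the boundary pieces $Y_j, K_j$ of $\widetilde X$ cause no trouble because if $\alpha$ happened to lie on $Y_j$ or $K_j$, then $K = \mathbb{Q}(i,\beta)$ would have to be of type B (type A is excluded by the linear-independence statement of Lemma \ref{field1}, since points of $Y_j, K_j$ satisfy a nontrivial $\mathbb{Q}$-linear relation among $1, \mathrm{Re}, \mathrm{Im}$), and then Lemma \ref{field2} transfers that same linear relation to $\alpha'$, placing $\alpha'$ on the corresponding boundary piece $Y_j'$ or $K_j'$ — which is precisely what $N_1$ demands.

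**Main obstacle.** The genuine difficulty is the case analysis for the second coordinate: showing $\alpha' \in W_i$ for the correct $i$ matching $\alpha \in X_i$, with exact (closed vs. open) boundary behavior. This requires pinning down the branch of $\sqrt{m+ni}$ and the value $\lfloor\sqrt{m+ni}\rfloor_H$ concretely enough to see which disks $|z - (w - i^k(1\pm i))| \ge 1$ are the binding constraints, and then verifying those inequalities for $z = \alpha' = -\sqrt{m+ni} - w$. I expect this to reduce, after translating by $w$, to the single clean inequality $|\sqrt{m+ni} + \sqrt{m+ni}\,'| \le$ something small — i.e. $|\alpha| \le \sqrt2/2$ — combined with $|\sqrt{m+ni}| \ge \sqrt 2$, exactly mirroring the real-quadratic fact that $\sqrt n - \lfloor\sqrt n\rfloor$ has conjugate in $(-1,0)$. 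The bookkeeping across the eight regions $X_1,\dots,X_8$ is the part I would actually have to grind through, but each individual case is a one-line disk computation identical to those already appearing in the proof of Theorem \ref{galois} and Lemma \ref{For1}.
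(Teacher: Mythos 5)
There is a genuine gap, and it sits exactly where you park it: the ``main obstacle'' you defer is the entire content of the theorem, and the quantitative claims you do make about it are wrong. First, $|m+ni|\ge 2$ is false: $m+ni\in\{i,\,1+i,\,1-i,\,-1+i,\dots\}$ all have $\sqrt{m+ni}\notin\mathbb{Q}(i)$ and modulus $1$ or $\sqrt2$, so $|\beta|$ can be as small as $1$. Second, even where your bound holds, $|\alpha'|>1$ is nowhere near sufficient for $\alpha'\in W_j$: each $W_j$ excludes, in addition to the unit disk at $0$, one or two unit disks centered at points $i^{k}(1-i)$ of modulus $\sqrt2$, so the ``automatic'' conclusion needs $|\alpha'|>1+\sqrt2$, not $|\alpha'|>1$. (Also $|\alpha|\le 1$ on $\overline X$, not $\le\sqrt2/2$; the square has circumradius $1$.) For small $m+ni$ this threshold genuinely fails or is attained with equality --- e.g.\ for $m+ni=i$ one gets $\alpha'=-(1+1/\sqrt2)(1+i)$ with $|\alpha'-(-1-i)|$ exactly $1$ --- so no soft estimate will close these cases.

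The paper's proof resolves both issues in a way your sketch does not anticipate. For $|m|\ge4$ or $|n|\ge4$ it uses the single inequality $|\alpha'|=|-\beta-\lfloor\beta\rfloor_H|\ge 2|\beta|-|\alpha|\ge 4-1=3>2\sqrt2$, which places $\alpha'$ in \emph{every} $W_j$ simultaneously; hence $(\alpha,\alpha')\in\widetilde X$ with no per-region bookkeeping at all, and Theorem \ref{galois} applies directly (there is no need to pass to $N_1$ and Theorem \ref{galois2}, since boundary membership is irrelevant once $\alpha'$ lies in all eight $W_j$). The finitely many remaining cases $|m|,|n|\le 3$ are then checked by explicit computation of the expansions (Tables \ref{t1} and \ref{t2}). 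Your proposal supplies no mechanism for this second, unavoidable step, and its estimate for the first step is insufficient; the Lemma \ref{field1}/\ref{field2} boundary discussion you include is fine but moot under the paper's route.
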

\begin{proof}

For $m$ and $n$ with $|m| \leq 3$ and $|n| \leq 3$, we have the following Table \ref{t1} and Table \ref{t2}, which demonstrate that the theorem holds within this range.

We assume that $|m|\geq 4$ or $|n|\geq 4$.
Then, we have 
\begin{align}\label{m+ni}
&|-\sqrt{m+ni}-\lfloor \sqrt{m+ni}\rfloor_H|
\geq|-2\sqrt{m+ni}|-|\sqrt{m+ni}-\lfloor \sqrt{m+ni}\rfloor_H|\geq\\\nonumber
&\geq 4-1\geq 3>2\sqrt{2}.
\end{align}
Therefore, we see that for all $1\leq j \leq 8$, $\alpha'\in W_j$.
Since $(\alpha,\alpha')\in \widetilde{X}$ holds, from Theorem \ref{galois},
we have the claim of the theorem.

\begin{table}[H]
\caption{Continued fraction expansion of $\sqrt{m+ni}-\lfloor \sqrt{m+ni}\rfloor_H$ with $|m|,|n|<3$}
\label{t1}
\begin{tabular}{l|l}
$\sqrt{m+ni}-\lfloor \sqrt{m+ni}\rfloor_H$ & the continued fraction expansion \\
\hline
$\sqrt{-2-2i}-(1-i)$&$[0;\overline{-1+i,2-2i}]$\\
\hline
$\sqrt{-2-i}-(-2i)$&$[0;\overline{1-i,-1+3i,-1+i,1-3i}]$\\
\hline
$\sqrt{-2}-2i$&$[0;\overline{2i,4i}]$\\
\hline
$\sqrt{-2+i}-2i$&$[0;\overline{1+i,-1-3i,-1-i,1+3i}]$\\
\hline
$\sqrt{-2+2i}-(1+i)$&$[0;\overline{-1-i,2+2i}]$\\
\hline
$\sqrt{-1-2i}-(1-i)$&$[0;\overline{-2+2i,2-2i}]$\\
\hline
$\sqrt{-1-i}-(1-i)$&$[0;\overline{-2,2-2i}]$\\
\hline
$\sqrt{-1+i}-(1+i)$&$[0;\overline{-2,2+2i}]$\\
\hline
$\sqrt{-1+2i}-(1+i)$&$[0;\overline{-2-2i,2+2i}]$\\
\hline
$\sqrt{-i}-(1-i)$&$[0;\overline{-2-2i,2-2i}]$\\
\hline
$\sqrt{i}-(1+i)$&$[0;\overline{-2+2i,2+2i}]$\\
\hline
$\sqrt{1-2i}-(1-i)$&$[0;\overline{2-2i}]$\\
\hline
$\sqrt{1-i}-(1-i)$&$[0;\overline{-2i,2-2i}]$\\
\hline
$\sqrt{1+i}-(1+i)$&$[0;\overline{2i,2+2i}]$\\
\hline
$\sqrt{1+2i}-(1+i)$&$[0;\overline{2+2i}]$\\
\hline
$\sqrt{2-2i}-(1-i)$&$[0;\overline{1-i,2-2i}]$\\
\hline
$\sqrt{2-i}-2$&$[0;\overline{-1+i,-3+i,1-i,3-i}]$\\
\hline
$\sqrt{2}-2$&$[0;\overline{-2,4}]$\\
\hline
$\sqrt{2+i}-2$&$[0;\overline{-1-i,-3-i,1+i,3+i}]$\\
\hline
$\sqrt{2+2i}-(1+i)$&$[0;\overline{1+i,2+2i}]$\\
\hline
\end{tabular}
\end{table}

\begin{table}[H]
\caption{Continued fraction expansion of $\sqrt{m+ni}-\lfloor \sqrt{m+ni}\rfloor_H$ for $|m|,|n|\leq 3$ with either $|m|=3$ or $|n|=3$}
\label{t2}
\begin{tabular}{l|l}
$\sqrt{m+ni}-\lfloor \sqrt{m+ni}\rfloor_H$ & the continued fraction expansion \\
\hline
$\sqrt{-3-3i}-(-2i)$&$[0;\overline{2,-2,1-i,1-i,-1+i,1-3i}]$\\
\hline
$\sqrt{-3-2i}-(-2i)$&$[0;\overline{2,-1+i,1-3i}]$\\
\hline
$\sqrt{-3-i}-(-2i)$&$[0;\overline{2-2i,-4i}]$\\
\hline
$\sqrt{-3}-2i$&$[0;\overline{4i}]$\\
\hline
$\sqrt{-3+i}-2i$&$[0;\overline{2+2i,4i}]$\\
\hline
$\sqrt{-3+2i}-2i$&$[0;\overline{2,-1-i,1+3i}]$\\
\hline
$\sqrt{-3+3i}-2i$&$[0;\overline{2,-2,1+i,1+i,-1-i,1+3i}]$\\
\hline
$\sqrt{3-3i}-2$&$[0;\overline{2i,2i,-1+i,1-i,1-i,3-i}]$\\
\hline
$\sqrt{3-2i}-2$&$[0;\overline{2i,-1+i,-3+i,-2i,1-i,3-i}]$\\
\hline
$\sqrt{3-i}-2$&$[0;\overline{-2+2i,4}]$\\
\hline
$\sqrt{3}-2$&$[0;\overline{-4,4}]$\\
\hline
$\sqrt{3+i}-2$&$[0;\overline{-2-2i,4}]$\\
\hline
$\sqrt{3+2i}-2$&$[0;\overline{-2i,-1-i,-3-i,2i,1+i,3+i}]$\\
\hline
$\sqrt{3+3i}-2$&$[0;\overline{-2i,-2i,-1-i,1+i,1+i,3+i}]$\\
\hline
$\sqrt{-2-3i}-(1-i)$&$[0;\overline{2i,-1+i,1-i,-2+2i,-2i,1-i,-1+i,2-2i}]$\\
\hline
$\sqrt{-1-3i}-(1-i)$&$[0;\overline{2i,2-2i}]$\\
\hline
$\sqrt{-3i}-(1-i)$&$[0;\overline{2+2i,2-2i}]$\\
\hline
$\sqrt{1-3i}-(1-i)$&$[0;\overline{2,2-2i}]$\\
\hline
$\sqrt{2-3i}-(1-i)$&$[0;\overline{2,-1+i,-1+i,-2+2i,-2,1-i,1-i,2-2i}]$\\
\hline
$\sqrt{-2+3i}-(1+i)$&$[0;\overline{-2i,-1-i,1+i,-2-2i,2i,1+i,-1-i,2+2i}]$\\
\hline
$\sqrt{-1+3i}-(1+i)$&$[0;\overline{-2i,2+2i}]$\\
\hline
$\sqrt{3i}-(1+i)$&$[0;\overline{2-2i,2+2i}]$\\
\hline
$\sqrt{1+3i}-(1+i)$&$[0;\overline{2,2+2i}]$\\
\hline
$\sqrt{2+3i}-(1+i)$&$[0;\overline{2,-1-i,-1-i,-2-2i,-2,1+i,1+i,2+2i}]$\\
\end{tabular}
\end{table}
\end{proof}

\begin{rem}
It is not difficult to see that
for $\alpha\in \overline{X}$, if $\alpha=[0,a_1^H,a_2^H,\ldots,a_n^H,\ldots]$ holds,
then we have $-\alpha=[0,-a_1^H,-a_2^H,\ldots,-a_n^H,\ldots]$.
Therefore, for $a \in \mathbb{Q}(i)$, $\sqrt{a}$ has two possible values as solutions to $X^2 - a$, but whether its continued fraction is purely periodic or not is irrelevant for either solution.
\end{rem}

We have following Theorem.

\begin{thm}\label{galois5}
Let $m,n$ be integers with $\sqrt{m+ni}\notin \mathbb{Q}(i)$.
Let $\alpha=\sqrt{m+ni}-\lfloor \sqrt{m+ni}\rfloor_T$.
Then, $\{\alpha_{(n)}^T\}$ is purely periodic.
\end{thm}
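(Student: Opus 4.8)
The plan is to reduce Theorem~\ref{galois5} to Theorem~\ref{galois4}, which has already been established, by analyzing the relationship between the two floor functions $\lfloor \cdot \rfloor_H$ and $\lfloor \cdot \rfloor_T$ on the element $\sqrt{m+ni}$ and then transferring the pure periodicity of the J.~Hurwitz sequence to the Tanaka sequence via Theorem~\ref{galois3}. Concretely, set $\beta := \sqrt{m+ni} - \lfloor \sqrt{m+ni}\rfloor_H$, which is a quadratic irrational over $\mathbb{Q}(i)$ by Lemma~\ref{field1} (or directly since $\sqrt{m+ni}\notin\mathbb{Q}(i)$), and $\alpha := \sqrt{m+ni} - \lfloor \sqrt{m+ni}\rfloor_T$. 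Since $\lfloor \sqrt{m+ni}\rfloor_H$ and $\lfloor \sqrt{m+ni}\rfloor_T$ differ by an element of $\mathbb{Z}[i]$, we have $\alpha - \beta \in \mathbb{Z}[i]$, so $\alpha$ and $\beta$ generate the same field $K = \mathbb{Q}(i,\sqrt{m+ni})$ and have conjugates satisfying $\alpha' - \beta' \in \mathbb{Z}[i]$.

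First I would invoke Theorem~\ref{galois4} to conclude that $\{\beta_{(n)}^H\}$ is purely periodic, and hence by Theorem~\ref{galois2} that $(\beta,\beta')\in N_1$; in particular $(\beta,\beta')\in\widetilde{X}$, so $|\beta'|>1$ and more precisely $\beta'\in W_j$ for the appropriate index determined by which $X_i$ contains $\beta$. Next I would run the size estimate exactly as in the proof of Theorem~\ref{galois4}: when $|m|\geq 4$ or $|n|\geq 4$, the bound $|\alpha'| = |{-\sqrt{m+ni}} - \lfloor\sqrt{m+ni}\rfloor_T| \geq 2|\sqrt{m+ni}| - |\alpha| \geq 4 - 1 > 2\sqrt{2}$ shows $\alpha'\in W_j$ for \emph{every} $1\leq j\leq 8$; since also $\alpha\in X$ and $\alpha$ is a quadratic irrational, we get $(\alpha,\alpha')\in N_2$ (the membership in $N_2$ rather than merely $\widetilde{X}$ follows because the large modulus of $\alpha'$ places it well inside every $W_j$, in particular outside the boundary arcs $Y_j', K_j'$, so only the $X_i\times W_i$ pieces of $N_2$ are relevant). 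Then Theorem~\ref{galois3} immediately yields that $\{\alpha_{(n)}^T\}$ is purely periodic.

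For the remaining finitely many pairs $(m,n)$ with $|m|\leq 3$ and $|n|\leq 3$, I would handle them by direct computation, presenting a table of the Tanaka continued fraction expansions of $\sqrt{m+ni} - \lfloor\sqrt{m+ni}\rfloor_T$ analogous to Tables~\ref{t1} and~\ref{t2}; in each case the expansion is visibly purely periodic. One should note that for most of these small cases the Tanaka and J.~Hurwitz floors coincide or differ in a way already controlled by Lemmas~\ref{Letuin}, \ref{backslash}, \ref{T_H(Y_1)}, \ref{T_T(Y_1)}, so one could alternatively argue: $\{\beta_{(n)}^H\}$ purely periodic gives $(\beta,\beta')\in N_1$ by Theorem~\ref{galois2}, and then Theorem~\ref{galois3} applied to $\alpha$ reduces everything to checking $(\alpha,\alpha')\in N_2$, which (using $\alpha-\beta\in\mathbb{Z}[i]$, hence $\alpha$ lies in the same $X_i$ up to translation and $\alpha'$ in a translate of $W_j$) follows once one verifies $\alpha\notin K_3\cup K_4$ when $\alpha\in K_1\cup K_2\cup Y_3\cup Y_4$. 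The main obstacle is precisely this boundary bookkeeping for the small cases: deciding, when $\alpha$ lands on one of the circular arcs $K_j$ or segments $Y_j$, whether $(\alpha,\alpha')$ falls in the $N_2$-permitted pieces versus the excluded $K_3, K_4$ pieces — the interior estimate handles all large cases uniformly, so the entire difficulty is concentrated in the explicit verification for $|m|,|n|\leq 3$, which is why a table is the cleanest route.
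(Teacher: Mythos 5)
Your overall strategy (tables for $|m|,|n|\leq 3$, the modulus estimate $|\alpha'|>2\sqrt{2}$ for the rest, then Theorem~\ref{galois3}) matches the paper's, but there is a genuine gap in the large case. You claim that the large modulus of $\alpha'$ means ``only the $X_i\times W_i$ pieces of $N_2$ are relevant,'' so that $(\alpha,\alpha')\in N_2$ follows from $\alpha'\in\bigcap_j W_j$. This is a non sequitur: membership in $\bigcup_i X_i\times W_i$ requires the \emph{first} coordinate $\alpha$ to lie in some open cell $X_i$, and no bound on $|\alpha'|$ prevents $\alpha$ from landing on one of the arcs $K_j$ or the segments $Y_3,Y_4$, which are disjoint from every $X_i$. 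Worse, if $\alpha\in K_j$ then, precisely because $|\alpha'|>2\sqrt{2}$, one has $\alpha'\notin K_j'$ (each $K_j'$ lies on a circle of radius $\sqrt{2}/2$ about a point of modulus $\sqrt{2}/2$, hence in $|z|\leq\sqrt{2}$), so $(\alpha,\alpha')\notin N_2$ and Theorem~\ref{galois3} would give \emph{non}-periodicity. So the boundary cases must be excluded for all $m,n$, not only for $|m|,|n|\leq 3$; your remark that ``the interior estimate handles all large cases uniformly'' concentrates the difficulty in the wrong place. (Also, your parenthetical that $\alpha'$ avoids the $Y_j'$ because of its large modulus is false: the $Y_j'$ are unbounded rays.)

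The paper fills exactly this gap with two additional arguments that your proposal lacks: Lemma~\ref{galois5-2}, an arithmetic computation (splitting on $mn\neq0$, $n=0$, $m=0$ and using the type~A/type~B dichotomy of Definition~\ref{fieldtype} together with Lemmas~\ref{field1} and~\ref{field3}) showing that $\sqrt{m+ni}+a+bi$ can never lie on any $K_j$; and, when $\alpha\in Y_3\cup Y_4$, an application of Lemma~\ref{field2} (the field is then necessarily of type~B) to conclude $\alpha'\in Y_3'$ resp.\ $Y_4'$, so that $(\alpha,\alpha')$ lands in the $Y_i\times Y_i'$ piece of $N_2$. Your reduction via $\beta=\sqrt{m+ni}-\lfloor\sqrt{m+ni}\rfloor_H$ and $\alpha-\beta\in\mathbb{Z}[i]$ does not substitute for this, since a Gaussian-integer translation does not carry the cell decomposition $\{X_i\}$ or the sets $N_1,N_2$ to themselves in any controlled way. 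To repair the proof you would need to supply the content of Lemma~\ref{galois5-2} and the $Y_3,Y_4$ analysis explicitly.
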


We need following Lemma.

\begin{lem}\label{galois5-2}
Let $m,n$ be integers with $\sqrt{m+ni}\notin \mathbb{Q}(i)$.
Let $a,b$ be integers.
Let $\alpha=\sqrt{m+ni}+a+bi$ 
Then, 
$\alpha\notin \bigcup_{1\leq j\leq 4}K_j$.
\end{lem}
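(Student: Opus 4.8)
The plan is to show that $\alpha = \sqrt{m+ni} + a + bi$ cannot land on any of the four circular arcs $K_j = \{z : |z - i^{j-1}(1/2 - i/2)| = \sqrt{2}/2,\ |z| \le 1\}$. The key observation is that each $K_j$ lies on a circle of radius $\sqrt{2}/2$ whose equation, after expanding, has the form $|z|^2 = \mathrm{Re}(\overline{c_j} z)\cdot(\text{const})$ for the center $c_j = i^{j-1}(1/2 - i/2)$; concretely $|z - c_j|^2 = 1/2$ with $|c_j|^2 = 1/2$ simplifies to $|z|^2 = 2\,\mathrm{Re}(\overline{c_j}z) = \mathrm{Re}((1 \mp i)\,i^{1-j}z)$ (sign depending on $j$), which is a single $\mathbb{Q}$-linear relation among $1$, $\mathrm{Re}(z)$, $\mathrm{Im}(z)$, $\mathrm{Re}(z)^2$, $\mathrm{Im}(z)^2$. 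So membership in $K_j$ forces $\alpha$ to satisfy a rational-coefficient quadratic relation between its real and imaginary parts. I would write $\sqrt{m+ni} = p + qi$ with $p,q \in \mathbb{R}$, so that $\mathrm{Re}(\alpha) = p + a$, $\mathrm{Im}(\alpha) = q + b$, and $p^2 - q^2 = m$, $2pq = n$.

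First I would reduce to the case $j = 1$ (so $c_1 = 1/2 - i/2$): the other three are obtained by multiplying by $i$, i.e. by replacing $(a,b,m,n)$ by an appropriate rotation, and the hypotheses are invariant under this. For $j=1$ the defining relation $|z - (1/2 - i/2)|^2 = 1/2$ becomes $\mathrm{Re}(z)^2 + \mathrm{Im}(z)^2 - \mathrm{Re}(z) + \mathrm{Im}(z) = 0$. Substituting $\mathrm{Re}(\alpha) = p+a$, $\mathrm{Im}(\alpha) = q+b$ and using $p^2 + q^2 = \sqrt{m^2+n^2}$ (the modulus of $\sqrt{m+ni}$), $p^2 - q^2 = m$, I would collect terms: the quadratic-in-$p,q$ part contributes $\sqrt{m^2+n^2}$, which is either an integer or an irrational quadratic surd, while the linear part contributes $(2a-1)p + (2b+1)q$ plus integer/rational constants. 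The relation then reads
\begin{align*}
\sqrt{m^2+n^2} + (2a-1)p + (2b+1)q + (a^2 + b^2 - a + b) = 0.
\end{align*}

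Now the main obstacle, and the crux of the argument, is to derive a contradiction from this single equation using the irrationality of $\sqrt{m+ni}$ over $\mathbb{Q}(i)$. I would argue via Galois conjugation: apply the nontrivial automorphism $\sigma$ of $\mathbb{Q}(i,\sqrt{m+ni})/\mathbb{Q}(i)$, which sends $\sqrt{m+ni} \mapsto -\sqrt{m+ni}$, i.e. $p + qi \mapsto -(p+qi)$, hence $p \mapsto -p$, $q \mapsto -q$; note $\sqrt{m^2+n^2} = \sqrt{m+ni}\cdot\overline{\sqrt{m+ni}}$ is fixed or negated by $\sigma$ depending on whether $\sqrt{m-ni} \in \mathbb{Q}(i,\sqrt{m+ni})$ lies in the same conjugacy pattern — here I would invoke the case analysis from Lemma \ref{field1}/Lemma \ref{field3} (type A versus type B). In type A, $\sqrt{m^2+n^2} \notin \mathbb{Q}$ and by Lemma \ref{field1} the numbers $1, p, q$ are linearly independent over $\mathbb{Q}$, so the displayed equation forces $2a - 1 = 0$ and $2b + 1 = 0$, impossible for integers $a,b$. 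In type B, $\sqrt{m^2+n^2} \in \mathbb{Z}$, so the equation becomes $(2a-1)p + (2b+1)q = -(\text{integer})$; applying $\sigma$ gives $(2a-1)p + (2b+1)q = +(\text{integer})$ as well, forcing that integer to be $0$, whence $(2a-1)p = -(2b+1)q$; but $p, q$ are then $\mathbb{Q}$-linearly dependent, and since $\sqrt{m+ni} = p+qi \notin \mathbb{Q}(i)$ this dependence combined with $p^2 - q^2 = m \in \mathbb{Z}$, $2pq = n \in \mathbb{Z}$ again forces $2a-1 = 2b+1 = 0$ (the only way a $\mathbb{Q}$-linear relation $\lambda p = \mu q$ with $p^2,q^2,pq$ rationally spanning a $2$-dimensional space over $\mathbb{Q}$ can hold is $\lambda = \mu = 0$). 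Either way we reach a contradiction, so $\alpha \notin K_1$, and by the rotation reduction $\alpha \notin \bigcup_{1 \le j \le 4} K_j$. The delicate point I expect to spend the most care on is the type-B bookkeeping: making sure the "integer" constant really is an integer (not merely rational) and that the $\mathbb{Q}$-linear independence of $1, p, q$ is correctly quoted or re-derived from $\sqrt{m+ni} \notin \mathbb{Q}(i)$ together with $\sqrt{m^2+n^2} \in \mathbb{Q}$.
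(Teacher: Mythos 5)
Your reduction to $K_1$ by rotation and the expanded relation
$\sqrt{m^2+n^2} + (2a-1)p + (2b+1)q + (a^2+b^2-a+b) = 0$ (with $\sqrt{m+ni}=p+qi$) are correct and essentially reproduce the paper's computation, but both branches of your case analysis break at the decisive step. In type A you dispose of the equation by quoting Lemma \ref{field1}, i.e.\ the $\mathbb{Q}$-linear independence of $1,p,q$; but your equation also contains the term $\sqrt{m^2+n^2}=p^2+q^2$, which in type A is irrational and does \emph{not} lie in the $\mathbb{Q}$-span of $1,p,q$, so independence of those three numbers yields nothing about this four-term relation. To make your route work you need the stronger fact that $1,\sqrt{m+ni},\sqrt{m-ni},\sqrt{m^2+n^2}$ are linearly independent over $\mathbb{Q}(i)$ (this is established inside the proof of Lemma \ref{field1}, not in its statement), or you should do what the paper does: note that $\alpha\in K_j$ forces $\alpha^{-1}\in Y_j'$, a straight segment, so $1,\mathrm{Re}(\alpha^{-1}),\mathrm{Im}(\alpha^{-1})$ satisfy a genuinely \emph{linear} rational relation, and Lemma \ref{field1} applied to $w=\alpha^{-1}$ rules out type A directly.

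In type B your endgame is false as stated: from $(2a-1)p+(2b+1)q=0$ you conclude $2a-1=2b+1=0$ on the grounds that $p,q$ cannot be $\mathbb{Q}$-linearly dependent, but in type B they always are. By (the proof of) Lemma \ref{field3}, $\sqrt{m+ni}=\pm\sqrt{l}\,(u+vi)$ with $u,v\in\mathbb{Z}$, so $p,q$ are rational multiples of the single surd $\sqrt{l}$, and, e.g., $u=v=1$, $a=1$, $b=-1$ gives a nontrivial relation $(2a-1)u+(2b+1)v=0$. The contradiction instead comes from the other half of your own deduction, which you derive and then abandon: applying $\sigma$ (legitimate here precisely because $p,q\in\mathbb{Q}(\sqrt{l})$) forces the rational part to vanish, i.e.\ $\sqrt{m^2+n^2}+a^2+b^2-a+b=0$, and this is impossible because $\sqrt{m^2+n^2}$ is a positive integer while $a^2-a\geq 0$ and $b^2+b\geq 0$ for integers $a,b$. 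That positivity argument is exactly how the paper concludes (its equation (\ref{l(u^2+v^2)})). Finally, the degenerate cases $n=0$ and $m=0$ (where one of $u,v$ vanishes or $p=\pm q$) need the brief separate treatment the paper gives them, since your uniform formula $\sqrt{m+ni}=\sqrt{l}(u+vi)$ with $uv\neq 0$ is quoted only for $mn\neq 0$. So the skeleton of your proof is the right one, but both the type-A citation and the type-B conclusion must be repaired as indicated.
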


\begin{proof}
We assume that $\alpha\in \bigcup_{1\leq j\leq 4}K_j$.
Then, we have $\alpha^{-1}\in \bigcup_{1\leq j\leq 4}Y_j'$.
We set $K=\mathbb{Q}(i,\sqrt{m+ni})$.
If $K$ is of type A, from Lemma \ref{field1}, we have $\alpha^{-1}\notin \bigcup_{1\leq j\leq 4}Y_j'$.
Therefore, $K$ is of type B.
We assume $\alpha\in K_1$.
First, we assume that $mn\ne 0$.
Then, from the proof Lemma \ref{field3}, we see that
there exist $l\in \mathbb{Z}_{>0}$ and $u,v\in \mathbb{Z}$ with $uv\ne 0$ such that $\sqrt{l}\notin \mathbb{Z}$ and
$\sqrt{m+ni}=\sqrt{l}(u+vi)$.
Then, we have
\begin{align*}
\left(\sqrt{l}u+a-\dfrac{1}{2}\right)^2+\left(\sqrt{l}v+b+\dfrac{1}{2}\right)^2=\dfrac{1}{2},
\end{align*}
which implies 
\begin{align}\label{l(u^2+v^2)}
&l(u^2+v^2)+a^2+b^2-a+b=0,
\end{align}
However, we have easily 
\begin{align*}
l(u^2+v^2)+a^2+b^2-a+b>0,
\end{align*}
which contradicts (\ref{l(u^2+v^2)}).
Next, we assume that $n=0$.
Then, we have
\begin{align*}
\left(\sqrt{m}+a-\dfrac{1}{2}\right)^2+\left(b+\dfrac{1}{2}\right)^2=\dfrac{1}{2},
\end{align*}
which implies that $a-\frac{1}{2}=0$, which contradicts $a\in \mathbb{Z}$.
 Subsequently, we assume that $m=0$.
 we assume $n>0$. 
 Then, we have $\sqrt{ni}=\sqrt{n}\sqrt{i}=\frac{\sqrt{2n}}{2}+\frac{\sqrt{2n}}{2}i$, where
$\sqrt{2n}\notin \mathbb{Z}$.
Then, we have
\begin{align*}
\left(\frac{\sqrt{2n}}{2}+a-\dfrac{1}{2}\right)^2+\left(\frac{\sqrt{2n}}{2}+b+\dfrac{1}{2}\right)^2=\dfrac{1}{2},
\end{align*}
which implies 
\begin{align}\label{n+left}
n+\left(a-\dfrac{1}{2}\right)^2+\left(b+\dfrac{1}{2}\right)^2=\dfrac{1}{2}.
\end{align}
From the fact that $n\geq 1$, the equation (\ref{n+left}) does not hold.
The proof can be done similarly for the case when $n < 0$.
In the case where $\alpha \in \bigcup_{2\leq j\leq 4}K_j$, the proof can be done similarly. 
\end{proof}

We give a proof of Theorem \ref{galois5}.

\begin{proof}
It can be verified that the continued fraction expansion of $\sqrt{m+ni} - \lfloor \sqrt{m+ni} \rfloor_H$ using Algorithm (\ref{algolH}) matches the continued fraction expansion of $\sqrt{m+ni} - \lfloor \sqrt{m+ni} \rfloor_T$ using Algorithm (\ref{algolT}) for
 $m$ and $n$ with $|m| \leq 3 $ and $|n| \leq 3$. Therefore, the theorem holds within this range(see Table \ref{t1}, \ref{t2}).
We assume that $|m|\geq 4$ or $|n|\geq 4$.
Then, we have similarly
\begin{align}\label{m+ni+2}
&|-\sqrt{m+ni}-\lfloor \sqrt{m+ni}\rfloor_T|
\geq|-2\sqrt{m+ni}|-|\sqrt{m+ni}-\lfloor \sqrt{m+ni}\rfloor_T|\geq\\\nonumber
&\geq 4-1\geq 3>2\sqrt{2}.
\end{align}
Therefore, if $\alpha\notin \bigcup_{1\leq j\leq 2} Y_j \cup \bigcup_{1\leq j\leq 4}K_j$ holds, 
then $(\alpha,\alpha')\in N_2$.
From Lemma \ref{galois5-2}, we have 
$\alpha\notin \bigcup_{1\leq j\leq 4}K_j$.
We assume that $\alpha\in Y_3$.
Let $K=\mathbb{Q}(i,\sqrt{m+ni})$.
Then, $K$ is of Type B and from the inequality (\ref{m+ni+2}) and Lemma \ref{field2}, we see that  $\alpha'\in Y_3'$.
Therefore, we have  $(\alpha,\alpha')\in N_2$.
Similarly, if $\alpha\in Y_4$ holds, then we have $(\alpha,\alpha')\in N_2$.
Therefore, from Theorem \ref{galois3}, the clam of Theorem holds.

\end{proof}

\vspace{2cm}

\noindent
Shin-ichi Yasutomi: Faculty of Science, Toho University, 2-1 Miyama, Funabashi Chiba, 274-8510, JAPAN\\
{\it E-mail address: shinichi.yasutomi@sci.toho-u.ac.jp}
\end{document}